\documentclass[10pt,oneside]{amsart}  
\usepackage[left=3.1cm,right=3.1cm,top=3.1cm,bottom=3.1cm]{geometry}
\usepackage{graphics}
\usepackage{graphicx}
\usepackage{multicol}
\usepackage{epsfig}
\usepackage{amsmath,amssymb,amsfonts, amsthm,epsfig}
\usepackage{verbatim}
\usepackage{enumerate}
\usepackage{hyperref}


\newtheorem{thm}{Theorem}

\newtheorem{lemma}{Lemma}
\newtheorem{rek}[lemma]{Remark}
\newtheorem{prop}[lemma]{Proposition}
\newtheorem{defi}[lemma]{Definition}

\newtheorem{cor}[lemma]{Corollary}

\numberwithin{lemma}{section}
\newcommand{\be}{\begin{equation}}
\newcommand{\en}{\end{equation}}
\newcommand{\ve}{\mathbf}

\renewcommand{\l}{{\lambda}}
\newcommand{\R}{{\mathbb{R}}}
\newcommand{\T}{{\mathbb{T}}}

\newcommand{\N}{{\mathbb{N}}}
\newcommand{\mattwo}[4]
	{\left(\begin{array}{cc}
                        #1  & #2   \\
                        #3 &  #4
                          \end{array}\right) }
\newcommand{\matthree}[9]
{\left(\begin{array}{ccc}
                        #1  & #2 & #3  \\
                        #4 &  #5 & #6 \\
                        #7 &  #8 & #9
                          \end{array}\right) }
\newcommand{\p}{\partial}

\newcommand{\Aonel}{\mathcal{A}_1^\l}

\newcommand{\Ail}{\mathcal{A}_i^\l}
\newcommand{\Ais}{\mathcal{A}_i^\sigma}
\newcommand{\Aoneln}{\mathcal{A}_{1,n}^\l}
\newcommand{\Aonez}{\mathcal{A}_1^0}
\newcommand{\Aonezn}{\mathcal{A}_{1,n}^0}
\newcommand{\Atwol}{\mathcal{A}_2^\l}
\newcommand{\Atwoln}{\mathcal{A}_{2,n}^\l}
\newcommand{\Atwoz}{\mathcal{A}_2^0}
\newcommand{\Atwozn}{\mathcal{A}_{2,n}^0}
\newcommand{\Bl}{\mathcal{B}^\l}
\newcommand{\Bs}{\mathcal{B}^\sigma}
\newcommand{\Bln}{\mathcal{B}^\l_n}
\newcommand{\Bz}{\mathcal{B}^0}
\newcommand{\Bzn}{\mathcal{B}^0_n}
\newcommand{\Cl}{\mathcal{C}^\l}
\newcommand{\Cln}{\mathcal{C}^\l_n}

\newcommand{\Dl}{\mathcal{D}^\l}
\newcommand{\Dln}{\mathcal{D}^\l_n}

\newcommand{\Ml}{\mathcal{M}^\l}

\newcommand{\Ql}{\mathcal{Q}^\l}
\newcommand{\Qs}{\mathcal{Q}^\sigma}

\newcommand{\Konez}{\mathcal{K}_{1}^0}
\newcommand{\Konezn}{\mathcal{K}_{1,n}^0}

\newcommand{\Mz}{\mathcal{M}^0}
\newcommand{\Ms}{\mathcal{M}^\sigma}
\newcommand{\Mnz}{\mathcal{M}^0_n}
\newcommand{\Mlz}{\mathcal{M}^{\l_0}}
\newcommand{\Mnl}{\mathcal{M}^\l_n}

\newcommand{\Fnz}{\mathcal{F}^0_n}

\newcommand{\Mnln}{\mathcal{M}^{\l_n}_n}

\newcommand{\lptz}{L_{P,0}^2}
\newcommand{\lpt}{L_P^2}
\newcommand{\lwt}{L_\pm^2}
\newcommand{\la}{\lambda}
\newcommand{\La}{\Lambda}
\newcommand{\Proj}{\mathcal{P}}

\newcommand{\hpt}{H_P^2}
\newcommand{\hptz}{H_{P,0}^2}

\begin{document}

\title{Instability of Nonsymmetric Nonmonotone Equilibria of the Vlasov-Maxwell System}
\author{Jonathan Ben-Artzi}
\today
\email{yonib@math.brown.edu}
\address{Department of Mathematics\\Brown University\\Providence, RI 02912}

\begin{abstract}
We consider the $1\frac{1}{2}$-dimensional relativistic Vlasov-Maxwell system that describes the time-evolution of a plasma. We find a relatively simple criterion for spectral instability of a wide class of equilibria. This class includes non-homogeneous equilibria that need not satisfy any additional symmetry properties (as was the case in previous results), nor should they be monotone in the particle energy. The criterion is given in terms of the spectral properties of two Schr\"{o}dinger operators that arise naturally from Maxwell's equations. The spectral analysis of these operators is quite delicate, and some general functional analytic tools are developed to treat them. These tools can be applied to similar systems in higher dimensions, as long as their domain is finite or periodic.
\end{abstract}

\maketitle

\section{Introduction}

\subsection{The relativistic Vlasov-Maxwell system}
In this paper we consider the linear stability of a super-heated plasma. The plasma is assumed to have low density, and thus collisions between particles may be ignored. We consider a neutral plasma consisting of two species -- ions and electrons -- differentiated by $\pm$ superscripts. The behavior of such a system is governed by the relativistic Vlasov-Maxwell (RVM) system of equations (see, e.g. \cite{glassey})
	\begin{subequations}\label{mainequations}
	\begin{equation}\label{mainvlasov}
	\p_tf^{\pm}+\hat{v}^\pm\cdot\nabla_xf^{\pm}+\frac{q^\pm}{m^\pm}\left(\ve{E}+\ve{E}^{ext}+\frac{\hat{v}^\pm}{c}\times\left(\ve{B}+\ve{B}^{ext}\right)\right)\cdot\nabla_vf^{\pm}=0,
	\end{equation}
	\begin{equation}
	\p_t\ve{E}=c\nabla\times\ve{B}-\ve{j},
	\hspace{10pt}
	\nabla\cdot\ve{E}=\rho,
	\hspace{10pt}
	\p_t\ve{B}=-c\nabla\times\ve{E},
	\hspace{10pt}
	\nabla\cdot\ve{B}=0,
	\end{equation}
	\begin{equation}
	\rho=4\pi\int(q^+f^++q^-f^-)\;dv,
	\hspace{10pt}
	\ve{j}=4\pi\int(\hat{v}^+q^+f^++\hat{v}^-q^-f^-)\;dv,
	\end{equation}
	\end{subequations}
where $c$ is the speed of light, $v$ is the momentum, $\hat{v}^\pm:=v/\sqrt{(m^\pm)^2+|v|^2/c^2}$ are the velocities of both species, $q^\pm$ are the charges and $m^\pm$ are the masses of the two species. The transport equation \eqref{mainvlasov} is called the Vlasov equation, and it is coupled with Maxwell's equations. $\ve{E}(t,x)$ and $\ve{B}(t,x)$ are the electric and magnetic fields, $f^{\pm}(t,x,v)\geq0$ are the electron and ion distribution functions, and $\ve{E}^{ext}$ and $\ve{B}^{ext}$ are external electric and magnetic fields. In addition, $\rho(x)$ and $\ve{j}(x)$ are the charge and current densities at the point $x$, respectively.

This paper extends the results of the author in \cite{benartzi1} where the equilibrium was nonmonotone in the particle energy, but certain symmetries of the equilibrium state were assumed in order to make the analysis simpler. Nearly all these assumptions are dropped, and only some of the decay assumptions are kept. The results in this paper are more general, and reduce to the previously known results when the aforementioned symmetries \emph{are} assumed. For most of the technical lemmas in this paper, we shall refer the reader to \cite{benartzi1} instead of repeating the proofs here.

Mathematically, one of the main improvements of this paper is the general functional analytic setting in which we present the results. The machinery we introduce (in \S\ref{func-anal-back}) helps us gain a better understanding of the behavior of spectra of families of truncated operators (that is, operators that depend upon \emph{two} parameters) that do not necessarily vary continuously in the operator-norm topology.

The main physical improvement of this paper is in the dropping of numerous assumptions that previously appeared in \cite{benartzi1} and in \cite{rvm1,lin-strauss-nl,rvm2}. In the latter works, the authors always assumed a strictly monotone decrease in plasma density as a function of the energy (see \eqref{eq:monotone} for a precise definition). While such an assumption is physically reasonable, one can certainly envision physical systems where this assumption does not hold. For this matter, the result of Penrose \cite{penrose} is relevant: in a simpler spatially homogeneous model, he showed that certain ``nonmonotone'' equilibria are linearly stable. In \cite{benartzi1} the author dropped the monotonicity assumption, but made certain symmetry assumptions (discussed in \S\ref{1.5-dim-sec}) on the equilibrium state. Those assumptions are relevant for a physical situation in which there are strong magnetic fields and negligible electric fields. Dropping all these assumptions permits applications to a wide range of equilibrium states.

Some more recent mathematical results that are concerned with stability of equilibrium solutions of Vlasov systems include \cite{guo1,guo2,batt-rein2,rein-stability}. In the electrostatic case, Mouhot and Villani \cite{mouhot-villani} recently established the phenomena known as \emph{Landau damping} mathematically. Vlasov systems may also describe stellar dynamics, when one considers an attractive potential. See \cite{guo-rein} for example.

\subsection{The $1\frac{1}{2}$ Dimensional Case}\label{1.5-dim-sec}
For simplicity, we take an equilibrium of the lowest dimensional system which has a nontrivial magnetic field: the so-called one-and-one-half-dimensional case. It turns out that this is the right symmetry to consider when studying \emph{tokamaks}. In this setting, we have one space dimension and a two-dimensional momentum space. The single spatial variable $x$ corresponds to $v_1$, and the additional velocity dimension is denoted by $v_2$. We write $v=(v_1,v_2)$. To simplify our notation, we take all physical constants and masses to be $1$, and we take the charges to be $+1$ and $-1$. The notation is as follows: we let $f^\pm(t,x,v)$ be the electron and ion distribution functions, and $\ve{E}(t,x)=\left(E_1(t,x),E_2(t,x),0\right)$ and $\ve{B}(t,x)=(0,0,B(t,x))$ be the electric and magnetic fields. In addition, we define the electric and magnetic potentials $\phi$ and $\psi$, which satisfy
	\begin{equation}\label{potentials}
	\p_x\phi
	=
	-E_1
	\hspace{2 cm}
	\p_x\psi
	=
	B.
	\end{equation}

We assume the existence of an equilibrium $f^{0,\pm}(x,v)$ which is a solution of RVM. Existence is guaranteed by the result of Glassey and Schaeffer \cite{gsc1.5} and some explicit examples were constructed in \cite{benartzi1,rvm1}. (See also \cite{gsc2.5} for a similar existence result in a higher dimensional setting.) By Jeans' Theorem (cf \cite{dendy}) this equilibrium can be represented in the coordinates (invariants of the particle equations)
	$$
	e^\pm
	=
	\left<v\right>\pm\phi^0(x)\hspace{1cm}
	p^\pm
	=
	v_2\pm\psi^0(x)
	$$
as
	$$
	f^{0,\pm}(x,v)=\mu^\pm(e^\pm,p^\pm),$$
where $\phi^0$ and $\psi^0$ are the equilibrium electric and magnetic potentials, satisfying $E_1^0=-\p_x\phi^0$ and $B^0=\p_x\psi^0$, with $E^0_1$ and $B^0$ being the equilibrium electric and magnetic fields. In addition $E_2^0\equiv0$. Henceforth it will be understood that $\mu^\pm$ are evaluated at $(e^\pm, p^\pm)$, so we will simply write $\mu^\pm(e,p)$. In this paper, we consider the ``\emph{nonmonotone}'' case. By that we simply mean that
	\begin{equation}\label{eq:monotone}
	\mu^\pm_e:=\frac{\p\mu^\pm}{\p e}\nless0
	\end{equation}
on some subset of the set $\{\mu^\pm(e,p)>0\}$. Here, ``$\mu_e^\pm$" means ``the derivative of $\mu^\pm$ with respect to the first component evaluated at $(e^\pm,p^\pm)$." Similarly, ``$\mu_p^\pm$" is the derivative with respect to the second component. By ``monotone", we mean that $\mu^\pm_e<0$ on the set $\{\mu^\pm>0\}$. Roughly speaking, the coordinates $e$ and $p$ should be understood to be energy and momentum respectively. The monotone case was the subject of \cite{rvm1,lin-strauss-nl,rvm2}. In \cite{benartzi1} the author dropped the monotonicity assumption, but imposed a so-called \emph{purely magnetic} assumption, where one assumes that $\phi^0\equiv0$, as well as a certain symmetry assumption on the two species: $\mu^+(e,p)=\mu^-(e,-p)$. Both assumptions simplified parts of the analysis, and both are dropped in this paper. We therefore note that in this paper there is no prescribed relationship between the electrons and the ions.

\subsection{Main Results}\label{mainres}
We assume for simplicity that the equilibrium has some given period $P$. We define three operators acting on functions of $x$, whose properties will be rigorously treated later:
	\begin{eqnarray*}
	\Aonez h&=&-\p_x^2h-\left(\sum_\pm\int\mu^\pm_e\;dv\right)h+\sum_\pm\int\mu^\pm_e\Proj^\pm h\;dv,\\
	\Atwoz h&=&-\p_x^2h-\left(\sum_\pm\int\hat{v}_2\mu^\pm_p\;dv\right)h-\sum_\pm\int\mu^\pm_e\hat{v}_2\Proj^\pm (\hat{v}_2h)\;dv,\\
	\Bz h&=&\left(\sum_\pm\int\mu^\pm_p\;dv\right)h+\sum_\pm\int\mu^\pm_e\Proj^\pm(\hat{v}_2h)\;dv.
	\end{eqnarray*}
where $\Proj^\pm$ are projection operators onto some subspaces of a certain Hilbert space (see Definition \ref{proj-define}). The operators $\Aonez$ and $\Atwoz$ have as their domain the space
	$$
	\hpt
	=
	\left\{h\text{ is }P\text{ periodic on }\R\text{ and }h\in H^2(0,P)\right\}
	$$
where $H^2(0,P)$ is the usual Sobolev space of functions on $(0,P)$ whose first two derivatives are square integrable. The domain of $\Bz$ is
	$$
	\lpt
	=
	\left\{h\text{ is }P\text{ periodic on }\R\text{ and }h\in L^2(0,P)\right\}.
	$$
In \cite{benartzi1} this operator turned out to be trivial, due to cancellations that came from the symmetry assumptions and the ``purely magnetic'' assumption. This fact was exploited in the analysis of the matrix operator defined in \eqref{mz} which turned out to be diagonal. This is not the case here.

We also define the number $l^0$ as:
	$$
	l^0=\frac{1}{P}\sum_\pm\int_0^P\int\hat{v}_1\mu^\pm_e\Proj^\pm\left(\hat{v}_1\right)\;dv\;dx.
	$$
In order to properly define function spaces that include functions that do not necessarily decay at infinity, we must define appropriate weight functions. This definition is different from the one found in \cite{benartzi1} due to the appearance of two distinct energies $e^+$ and $e^-$ (which was not the case in \cite{benartzi1} due to the ``purely magnetic'' assumption). Define
	\begin{equation}\label{weight}
	w^\pm(e^\pm)
	=
	c(1+|e^\pm|)^{-\alpha}
	\end{equation}
for some $\alpha>2$ and $c>0$. We require $\mu^\pm\in C^1$ to satisfy
	\begin{equation}\label{weight2}
	(|\mu^\pm_e|+|\mu^\pm_p|)(e^\pm,p^\pm)
	\leq
	w^\pm(e^\pm)
	\end{equation}
so that $\int(|\mu^\pm_e|+|\mu^\pm_p|)\;dv$ is finite. Obviously, we require $\mu^\pm(e,p)\geq0$.

In this paper, we denote
	$$
	neg(\mathcal{F})
	=
	\left\{
	\text{the number of negative eigenvalues (counting multiplicity) of the operator }\mathcal{F}
	\right\}.
	$$
Similarly, $pos(\mathcal{F})$ denotes the number of positive eigenvalues, and $z(\mathcal{F})$ denotes the dimension of the kernel of $\mathcal{F}$. Later we will show that $\Aonez$ and $\Atwoz$ have discrete spectra and finitely many negative eigenvalues. We also define:

\begin{defi}[Spectral instability]
We say that a given equilibrium $\mu^\pm(e,p)$ is \emph{spectrally unstable}, if the system linearized around it has a purely growing mode solution of the form
	\begin{equation}\label{purely}
	\left(e^{\la t}f^\pm(x,v),e^{\la t}E(x),e^{\la t}B(x)\right),
	\hspace{1cm}
	\la>0.
	\end{equation}

\end{defi}

\begin{thm}\label{mainthm}
Let $f^{0,\pm}(x,v)=\mu^\pm(e,p)$ be a periodic equilibrium satisfying \eqref{weight2}. Assume that the null space of $\Aonez$ consists of the constant functions and that $l^0\neq0$. Then the equilibrium is spectrally unstable if
	\be\label{mainthm-condition}
	neg\left(\Atwoz+\left(\Bz\right)^*\left(\Aonez\right)^{-1}\Bz\right)> neg\left(\Aonez\right)+neg(-l^0).
	\en
\end{thm}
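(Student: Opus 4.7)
The plan is to turn spectral instability into the existence of a nontrivial kernel for a self-adjoint matrix pencil $\Ml$ at some $\la>0$, and to track how its negative-eigenvalue count evolves between the endpoints $\la\to 0^+$ and $\la\to\infty$ so as to force a zero crossing. To set this up, I would seek a growing mode of the form \eqref{purely}, substitute it into the linearized Vlasov equation, and integrate along the equilibrium characteristics. Because $\la>0$, the characteristic integral converges absolutely as $s\to-\infty$, and expresses $f^\pm$ in terms of the potentials $\phi,\psi$ via $\la$-dependent averaging operators that formally reduce to $\Proj^\pm$ as $\la\to 0^+$. Plugging this representation back into the linearized Maxwell equations produces a self-adjoint block operator $\Ml$ acting on the triple consisting of $\phi$, the nonconstant part of $\psi$, and the scalar zero-mode of $\psi$, with diagonal blocks built from $\Ail$ and the scalar $l^\la$ and off-diagonal coupling $\Bl$. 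A nontrivial kernel of $\Ml$ at some $\la>0$ is equivalent to the existence of a purely growing mode at rate $\la$.

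The heart of the argument is then an eigenvalue-counting lemma. As $\la\to\infty$ the unbounded term $-\p_x^2$ dominates, so $\Ml$ is positive and $neg(\Ml)=0$. As $\la\to 0^+$, Haynsworth's inertia additivity applied to the Schur complement of $\Mz$---made legitimate by the assumption that the kernel of $\Aonez$ consists of constants (which decouple into the $l^0$ block) and by $l^0\neq 0$---identifies the combination $\Atwoz+(\Bz)^*(\Aonez)^{-1}\Bz$ in \eqref{mainthm-condition} with the ``regular'' negative-eigenvalue count of $\Mz$. As $\la$ varies over $(0,\infty)$, $neg(\Ml)$ can change only via (i) a true zero crossing of an eigenvalue of $\Ml$, which yields a growing mode, or (ii) a ``singular exchange'' when an eigenvalue of $\Ail$ or the scalar $l^\la$ passes through zero, making the Schur formula momentarily ill-defined. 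The total number of singular exchanges is bounded by $neg(\Aonez)+neg(-l^0)$, since these are exactly the zeros of $\Ail$ and $l^\la$ on $(0,\infty)$. Hence if the hypothesis \eqref{mainthm-condition} holds, at least one true zero crossing must occur at some $\la^*>0$, and the corresponding kernel element of $\mathcal{M}^{\la^*}$ decodes into the sought growing mode.

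The main obstacle is that the map $\la\mapsto\Ml$ is not operator-norm continuous as $\la\to 0^+$: the averaging operators $\Proj^\pm$ arise as a singular limit of the characteristic-integral kernels, and conventional analytic perturbation theory is therefore unavailable near the origin. To handle this I would invoke the general functional-analytic framework of \S\ref{func-anal-back}, which is tailor-made for two-parameter operator families of this type. Concretely, I would introduce a Galerkin-type truncation $\Mnl$ whose $\la$-dependence is norm-continuous for each fixed $n$, carry out the Schur/Haynsworth counting argument at the level of $\Mnl$, and then pass to the limit $n\to\infty$ using the abstract spectral-perturbation results of that section. Verifying that the inertia identity, the bound on singular exchanges, and the crossing parameters $\la^*_n$ all survive the joint $(n,\la)$ limit---so that the truncated growing modes converge to a genuine growing mode of the full system---is the principal technical difficulty of the proof.
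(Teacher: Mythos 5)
Your overall architecture matches the paper's: reduce to a nontrivial kernel of the self-adjoint block operator $\Ml$, truncate to finite dimensions to circumvent the lack of norm continuity at $\la=0$, compute inertia at the two ends of the $\la$-interval, and pass to the limit $n\to\infty$ using the two-parameter spectral framework of \S\ref{func-anal-back}. However, two of your key counting steps are wrong as stated, and they are exactly the steps that calibrate the condition \eqref{mainthm-condition}. First, the endpoint at large $\la$: you claim $\Ml$ is positive and $neg(\Ml)=0$ as $\la\to\infty$. But the matrix operator has $-\Aonel$ in the $(1,1)$ block and $-P(\la^2-l^\la)$ in the $(3,3)$ block; for large $\la$ one has $\Aonel>\gamma>0$ and $\la^2-l^\la>0$, so these blocks are \emph{negative} definite and the truncated operator $\Mnl$ has exactly $n+1$ negative eigenvalues (Lemma \ref{large-la}), not zero. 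With $neg=0$ at infinity your bookkeeping cannot reproduce \eqref{mainthm-condition}; the correct comparison is $neg(\mathcal{M}_n^{\l^*})\geq neg(\Mnz)>n+1=neg(\mathcal{M}_n^{\Lambda^*})$.

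Second, the term $neg(\Aonez)+neg(-l^0)$ does not arise as a bound on ``singular exchanges'' along $(0,\infty)$. For a continuous family of self-adjoint finite-dimensional operators, $neg(\Mnl)$ changes only when an eigenvalue of $\Mnl$ itself crosses zero; a zero of $\Aoneln$ or of $\la^2-l^\la$ at some interior $\la$ merely invalidates the Schur factorization at that point, it does not change the inertia of $\Mnl$, so there is nothing to subtract for such events (and bounding the number of interior zeros of $\Aonel$ by $neg(\Aonez)$ is not justified anyway). In the paper the term enters purely at $\la=0$: the Haynsworth/Schur identity gives $neg(\Mnz)=neg(\Atwozn+(\Bzn)^*(\Aonezn)^{-1}\Bzn)+neg(-\Aonezn)+neg(l^0)$, and since $\Aonezn$ is invertible on the mean-zero space $\hptz$ this equals $neg(\mathcal{K}^0_n)+n-neg(\Aonezn)+neg(l^0)$; combined with $neg(l^0)+neg(-l^0)=1$ (here $l^0\neq0$ is used) the hypothesis \eqref{mainthm-condition} yields $neg(\Mnz)\geq n+2$. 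You would also need the one-sided comparison $neg(\mathcal{M}_n^{\l})\geq neg(\Mnz)$ for small $\la>0$ uniformly in large $n$ (Proposition \ref{second-prop2}), since $0$ may lie in $\Sigma(\Mz)$; equality near $\la=0$ is only available under the extra kernel hypothesis of Theorem \ref{mainthm2}. With these two corrections your crossing argument and the $n\to\infty$ limit go through as you describe.
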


\begin{thm}\label{mainthm2}
Under the additional assumption that the null space of $\Atwoz$ is trivial, the equilibrium is spectrally unstable if
	\be\label{mainthm2-condition}
	neg\left(\Atwoz+\left(\Bz\right)^*\left(\Aonez\right)^{-1}\Bz\right)\neq neg\left(\Aonez\right)+neg(-l^0).
	\en
\end{thm}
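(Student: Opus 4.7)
The plan: Theorem \ref{mainthm} already yields spectral instability whenever the strict inequality
\begin{equation*}
neg\left(\Atwoz+(\Bz)^*(\Aonez)^{-1}\Bz\right) > neg(\Aonez) + neg(-l^0)
\end{equation*}
holds, so the only case needing a new argument is the reverse strict inequality
\begin{equation*}
neg\left(\Atwoz+(\Bz)^*(\Aonez)^{-1}\Bz\right) < neg(\Aonez) + neg(-l^0).
\end{equation*}
My strategy is to extract from the proof of Theorem \ref{mainthm} its underlying continuation mechanism and then use the added hypothesis $z(\Atwoz)=0$ to close off this remaining direction.

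The mechanism (as signalled by the $\lambda$-dependent families $\Aonel,\Atwol,\Bl$ and the matrix operator $\Ml$ introduced earlier) is that a purely growing mode of rate $\lambda_{*}>0$ exists precisely when $\ker \Ml \neq \{0\}$ at $\lambda=\lambda_{*}$, and such kernels are detected by following the integer-valued Morse index of the Schur complement $\Atwol+(\Bl)^*(\Aonel)^{-1}\Bl$ as $\lambda$ runs from $0^{+}$ to $+\infty$. The large-$\lambda$ limit of this index is $neg(\Aonez)+neg(-l^{0})$, and the small-$\lambda$ limit is $neg(\Atwoz+(\Bz)^*(\Aonez)^{-1}\Bz)$ \emph{provided} the family is continuous at $\lambda=0$. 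Any strict mismatch between the two endpoint values forces an integer jump at some intermediate $\lambda_{*}>0$, which can only happen via a zero eigenvalue crossing, i.e.\ by the appearance of a purely growing mode.

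The one-sided nature of Theorem \ref{mainthm} comes from the fact that $\lambda \mapsto \Atwol+(\Bl)^*(\Aonel)^{-1}\Bl$ is in general only strongly (not norm) continuous at $\lambda=0$, so its negative-eigenvalue count can jump across the limit. Theorem \ref{mainthm} is tailored to survive such a jump in a definite direction. The added hypothesis $z(\Atwoz)=0$ guarantees that $\Atwol$ is uniformly invertible in a right-neighborhood of $\lambda=0$ with a norm-continuous inverse there; this promotes the strong continuity of the Schur-complement family to genuine continuity of its index at $\lambda=0$. Under this two-sided continuity, a mismatch in \emph{either} direction between the two endpoints forces a crossing, giving instability and hence Theorem \ref{mainthm2}.

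Concretely, the steps I would carry out are: (i) recall the equivalence between spectral instability in the sense of \eqref{purely} and the existence of $\lambda_{*}>0$ with $\ker \Ml \neq \{0\}$; (ii) quote the large-$\lambda$ limit computation producing $neg(\Aonez)+neg(-l^{0})$; (iii) use the functional-analytic machinery of Section \ref{func-anal-back} together with $z(\Atwoz)=0$ to verify right-continuity of the Schur-complement index at $\lambda=0$ with limiting value $neg(\Atwoz+(\Bz)^*(\Aonez)^{-1}\Bz)$; (iv) close with the elementary intermediate-value argument on an integer-valued function. The principal obstacle is step (iii): converting the spectral condition $z(\Atwoz)=0$ into genuine continuity of the index through a family that is only strongly continuous at the endpoint, which is precisely the scenario for which the tools of Section \ref{func-anal-back} are designed.
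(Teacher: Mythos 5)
Your high-level skeleton --- continuation in $\l$, comparison of integer-valued negative counts at two endpoints, and the observation that the extra hypothesis upgrades one-sided control at $\l=0$ to two-sided control --- matches the paper. But two of your concrete steps would fail as stated. First, the object you propose to follow, the Schur complement $\Atwol+(\Bl)^*(\Aonel)^{-1}\Bl$ for $\l\in(0,\infty)$, is not known to be well-defined: $\Aonel$ is only assumed invertible (on $\hptz$) at $\l=0$, and nothing rules out $0\in\Sigma(\Aonel)$ at intermediate $\l$. The paper never inverts any $\l$-dependent operator; it tracks $neg(\Mnl)$ for the full truncated $3\times3$ matrix operator, and the Schur-complement/diagonalization bookkeeping \eqref{diag-l0} is performed only at the single point $\l=0$, in order to convert $neg(\Mnz)$ into the quantity appearing in \eqref{mainthm2-condition}. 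Likewise the large-$\l$ endpoint is not a Schur-complement computation: Lemma \ref{large-la} shows directly that $neg(\Mnl)=n+1$ for all $\l>\La^*$ by exhibiting definite subspaces, and the identification of ``$neg(\Mnz)\neq n+1$'' with \eqref{mainthm2-condition} uses $neg(-\Aonezn)=n-z(\Aonezn)-neg(\Aonezn)$ together with $neg(l^0)=1-neg(-l^0)$, bookkeeping you gloss over.

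Second, you misattribute the role of the hypothesis on $\ker\Atwoz$. It is not used to invert $\Atwol$ near $\l=0$, nor to upgrade strong convergence to norm continuity --- the strong-versus-norm issue is present in both theorems and is handled identically by the finite-$n$ truncation. The actual source of the one-sidedness in Theorem \ref{mainthm} is that $0$ may lie in $\Sigma(\Mz)$: zero eigenvalues of $\Mnz$ can slide into the negative half-line as $\l$ increases from $0$, so Proposition \ref{second-prop2} only yields $neg(\Mnl)\geq neg(\Mnz)$. The additional hypothesis guarantees $0\in\rho(\Mz)$, so Proposition \ref{first-prop2} applies and gives the equality $neg(\Mnl)=neg(\Mnz)$ for all small $\l>0$ and large $n$; a mismatch with $n+1$ in either direction then forces an eigenvalue of the finite-dimensional, $\l$-continuous family $\Mnl$ to cross zero at some $\l_n\in(\l^*,\La^*)$, and Proposition \ref{prop:limit} passes the resulting kernel elements to the untruncated operator $\Mlz$. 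Your step (iv), an intermediate-value argument applied directly to the untruncated family, is precisely what cannot be done, which is why the truncation and un-truncation steps are not optional scaffolding but the core of the proof.
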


The summary of the proofs of these theorems can be found on page \pageref{proof:theorems}.\\


This paper is organized as follows: in \S\ref{func-anal-back} we establish certain limiting spectral properties of Schr\"{o}dinger operators that depend upon two parameters and have merely strong limits (but no limits in the operator-norm topology). In \S\ref{setup-section} we reduce our problem to an equivalent selfadjoint problem \eqref{main-mat-op}. This problem depends upon the parameter $0<\l<\infty$ (see \eqref{purely}), which measures the rate of the exponential growth of the unstable mode. Our goal is to employ a continuation method introduced by Z. Lin \cite{lin01} in which one compares the spectrum when $\l=0$ and when $\l$ tends to $\infty$. However, since it is difficult to keep track of the spectrum, in \S\ref{large-small} we follow a truncation technique first introduced in \cite{rvm2}. As opposed to \cite{rvm2}, in this paper we use this technique to perform a \emph{full} truncation of the infinite-dimensional problem, making it finite-dimensional. In \S\ref{limit-section} we apply the results of \S\ref{func-anal-back} in order to find a solution to the approximate finite-dimensional problem (in Lemma \ref{approx-matrix-op-lemma}). The main difficulty is in obtaining information about the spectrum when $\l=0$. Finally, we retrieve the original problem by showing that the approximate solutions have a limit, as we let the truncation parameter tend to $\infty$ (see Lemma \ref{untrunc-problem}). This provides a growing mode as in \eqref{purely}.

\section{Functional Analysis Setup}\label{func-anal-back}
In this section we introduce some results about convergence of operators that depend upon two parameters. We state these results in a general functional analytic setting. Throughout this section, $C$ will be used to denote a generic constant that will change from equation to equation. We begin with some elementary comments. Our key result is Proposition \ref{uniform} and the propositions that follow.\\

We say that a sequence of operators $T_n$ in some Hilbert space $\mathcal{H}$ converge to another operator $T$ \emph{strongly} if for every $u\in \mathcal{H}$
	$$
	\|T_nu-Tu\|\to0
	$$
as $n\to\infty$, where $\|\cdot\|$ is the norm on $\mathcal{H}$. In this case we write $T_n\xrightarrow{s} T$. 
\begin{lemma}\label{uniform-bound}
Let $A,B$, $\{A_n\}_{n=1}^\infty,\{B_m\}_{m=1}^\infty$ be bounded operators on $\mathcal{H}$. Assume that $A_n \xrightarrow{s} A$ and $B_m \xrightarrow{s} B$ in $\mathcal{H}$ as $n,m\to\infty$. Let $u,u_k \in\mathcal{H}$ ($k=1,2,\dots$) and assume that $u_k\to u$ as $k\to\infty$. Then for every $\epsilon>0$ there exist $N,M,K$ such that for all $n>N,m>M,k>K$
	\be\label{unif-bound}
	\|A_nB_mu_k-ABu\|<\epsilon,
	\en
where $\|\cdot\|$ is the norm on $\mathcal{H}$.
\end{lemma}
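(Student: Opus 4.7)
The plan is to split $A_n B_m u_k - ABu$ into three telescoping pieces so that, on each piece, only one of the three convergences (of $A_n$, $B_m$, or $u_k$) is actually exercised, and each of the other two factors is controlled by a uniform operator bound. A natural decomposition is
\begin{equation*}
A_n B_m u_k - A B u
= A_n B_m (u_k - u) + A_n (B_m - B) u + (A_n - A) B u,
\end{equation*}
and the triangle inequality then reduces the statement to showing that each summand is less than $\epsilon/3$ for $n,m,k$ sufficiently large.

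The first step I would carry out is establishing a uniform norm bound on the sequences $\{A_n\}$ and $\{B_m\}$. Since $A_n \xrightarrow{s} A$, for every $v \in \mathcal{H}$ the sequence $\{A_n v\}$ is convergent and hence bounded, so by the uniform boundedness principle (Banach--Steinhaus) there is a constant $C$ with $\sup_n \|A_n\| \le C$, and similarly $\sup_m \|B_m\| \le C$. This is the key input that is easy to overlook: without it, the term $A_n B_m (u_k - u)$ cannot be controlled by the (arbitrarily small) quantity $\|u_k - u\|$.

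Next I would fix $\epsilon > 0$ and choose thresholds in the appropriate order. First, using strong convergence $A_n \xrightarrow{s} A$ applied to the fixed vector $Bu$, pick $N$ so that $\|(A_n - A) Bu\| < \epsilon/3$ for $n > N$. Second, using strong convergence $B_m \xrightarrow{s} B$ applied to the fixed vector $u$, pick $M$ so that $\|(B_m - B) u\| < \epsilon/(3C)$ for $m > M$, which yields $\|A_n (B_m - B) u\| \le C \cdot \|(B_m - B) u\| < \epsilon/3$. Third, using $u_k \to u$, pick $K$ so that $\|u_k - u\| < \epsilon/(3 C^2)$ for $k > K$, which yields $\|A_n B_m (u_k - u)\| \le C^2 \|u_k - u\| < \epsilon/3$. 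Summing the three bounds gives \eqref{unif-bound}.

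The main (and essentially only) obstacle is recognizing that strong convergence on its own does not give $\|A_n B_m - AB\| \to 0$ in operator norm, so one cannot simply write $\|A_n B_m u_k - A B u\| \le \|A_n B_m - AB\| \|u_k\| + \|AB\| \|u_k - u\|$. The remedy is precisely the telescoping decomposition above together with the Banach--Steinhaus step; once those are in place the proof reduces to three straightforward $\epsilon/3$ estimates.
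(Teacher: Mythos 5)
Your proof is correct and follows essentially the same route as the paper: the identical three-term telescoping decomposition, the uniform boundedness principle to control $\sup_n\|A_n\|$ and $\sup_m\|B_m\|$, and three $\epsilon/3$ estimates. Your write-up is in fact slightly more explicit than the paper's about the order in which the thresholds $N,M,K$ are chosen.
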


\begin{proof}
This lemma is close to \cite[III Lemma 3.8]{kato}, but we write the proof in full detail. We rewrite $A_nB_mu_k-ABu$ as
	$$
	A_nB_mu_k-ABu
	=
	A_nB_m(u_k-u)+A_n\left(B_m-B\right)u+\left(A_n-A\right)Bu.
	$$
We claim that all three terms tend to $0$; the first two due to the uniform boundedness principle (UBP) \cite[\S2.2]{brezis}. Consider the first term. Letting $\mathcal{L}(\mathcal{H})$ be the space of bounded linear operators on $\mathcal{H}$, and letting $\|\mathcal{F}\|_{\mathcal{L}(\mathcal{H})}$ be the operator norm of the operator $\mathcal{F}\in\mathcal{L}(\mathcal{H})$, we write
	$$
	\|A_nB_m(u_k-u)\|
	\leq
	\left(\sup_n\|A_n\|_{\mathcal{L}(\mathcal{H})}\right)\left(\sup_m\|B_m\|_{\mathcal{L}(\mathcal{H})}\right)\|u_k-u\|.
	$$
By the UBP, there exists some $C$ such that
	$$
	\left(\sup_n\|A_n\|_{\mathcal{L}(\mathcal{H})}\right)\left(\sup_m\|B_m\|_{\mathcal{L}(\mathcal{H})}\right)<C,
	$$
so we simply need to pick $K$ large enough so that for any $k>K$ it holds that $\|u_k-u\|<\epsilon/3C$. We can bound the second term by $\epsilon/3$ using the UBP and using the fact that $B_m\xrightarrow{s}B$ in $\mathcal{H}$. We can bound the last term by $\epsilon/3$ using the fact that $A_n\xrightarrow{s}A$ in $\mathcal{H}$. Combining these results, we find that \eqref{unif-bound} indeed holds.
\end{proof}

We let $H^k=H^k(\T^d)$ be the usual Sobolev space of order $k$, where $\T^d$ is the $d$-dimensional torus. The inner product and the norm in $H^k$, respectively,  are denoted by $\left<\cdot,\cdot\right>_k$ and $\|\cdot\|_k$. We denote
	\be
	\mathcal{L}(k,l)=\left\{L:H^k\to H^l\;|\;L\text{ is linear and bounded}\right\}
	\en
with the operator norm $\|\cdot\|_{k,l}$. Now, consider a family of operators $\left\{K^\l\right\}_{\l\in[0,1)}$ that satisfy
	\begin{subequations}
		\begin{equation}\tag{H1}\label{H1}
		K^\l\in\mathcal{L}(0,0)\text{ are selfadjoint, and }\sup_\l\|K^\l\|_{0,0}<C
		\end{equation}
		\begin{equation}\tag{H2}\label{H2}
		\text{For any }\sigma\in[0,1),\;K^\l\xrightarrow{s} K^\sigma\text{ as }\l\to\sigma.
		\end{equation}
	\end{subequations}
Define
	\be\label{schrodinger-op}
	A^\l:=-\triangle+K^\l,\hspace{.5cm}\l\in[0,1),
	\en
which is a family of selfadjoint operators in $H^0$ with domain $H^2$ (see \cite[V-\S4.1]{kato}) and with pure point spectrum tending to $+\infty$ (\cite[IV-Theorem 5.35]{kato}).

For brevity, we shall denote $K=K^0,A=A^0$.

\begin{lemma}
The operators $K^\l$ are relatively compact perturbations of $\triangle$.
\end{lemma}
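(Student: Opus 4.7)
The plan is to verify the standard criterion: an operator $K$ on a Hilbert space is relatively compact with respect to a self-adjoint operator $T$ provided $D(T)\subset D(K)$ and $K(T-z)^{-1}$ is compact for some $z$ in the resolvent set of $T$. So I would reduce the statement to showing that $K^\l(-\triangle-z)^{-1}$ is a compact operator on $H^0$ for some suitable $z$, for instance $z=-1$.

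First I would record the trivial inclusions. By hypothesis \eqref{H1}, each $K^\l$ is bounded $H^0\to H^0$, so in particular the domain inclusion $D(-\triangle)=H^2\subset H^0=D(K^\l)$ is automatic. Next I would invoke Rellich's compact embedding theorem on the torus $\T^d$: the inclusion $H^2(\T^d)\hookrightarrow H^0(\T^d)$ is compact. Since $(-\triangle+1):H^2\to H^0$ is a Hilbert space isomorphism (its Fourier multiplier is $|k|^2+1\geq 1$), the resolvent $(-\triangle+1)^{-1}:H^0\to H^2$ is bounded. Composing with the compact inclusion $H^2\hookrightarrow H^0$ gives compactness of $(-\triangle+1)^{-1}$ as an operator on $H^0$.

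Finally I would combine these facts: writing
\[
K^\l(-\triangle+1)^{-1}=K^\l\circ (-\triangle+1)^{-1},
\]
the second factor is compact on $H^0$ and the first is bounded on $H^0$, so the composition is compact. This is precisely the definition of $K^\l$ being a relatively compact perturbation of $-\triangle$ (equivalently of $\triangle$). The uniform bound in \eqref{H1} additionally shows that the family $\{K^\l(-\triangle+1)^{-1}\}_{\l\in[0,1)}$ is uniformly bounded in operator norm, which will likely be useful later but is not needed for the bare statement.

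The main obstacle is essentially nonexistent here: everything reduces to Rellich's compact embedding on a compact manifold (the torus), together with the boundedness of $K^\l$ from \eqref{H1}. Neither the hypothesis \eqref{H2} nor any structural property of $K^\l$ beyond boundedness plays a role at this step, so the argument is a short verification rather than a genuine computation.
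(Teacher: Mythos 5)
Your proposal is correct and is essentially the paper's own argument: both reduce the claim to the compactness of the Rellich embedding $H^2(\T^d)\hookrightarrow H^0(\T^d)$ composed with the $H^0$-bounded operator $K^\l$ from \eqref{H1}. The only difference is cosmetic — you phrase relative compactness via compactness of $K^\l(-\triangle+1)^{-1}$ on $H^0$, while the paper phrases it as compactness of $K^\l$ viewed as an operator from $H^2$ to $H^0$; these are equivalent since the graph norm of $-\triangle$ is equivalent to the $H^2$ norm.
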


\begin{proof}
We first note that ``relatively compact with respect to $\triangle$'' is the same as being ``compact as an operator from $H^2$ to $H^0$.'' Now, let $I:H^2\hookrightarrow H^0$ be the inclusion operator, which is compact by the Rellich theorem. Denoting $\bar{K}^\l=K^\l\large|_{H^2}$, the restriction of the operator $K^\l$ to $H^2$, then $\bar{K}^\l=K^\l\circ I$ is a composition of bounded operators. Since a composition of bounded operators is compact if one of them is compact, we conclude that $\bar{K}^\l$ is a compact operator from $H^2$ to $H^0$.
\end{proof}

Let
	\begin{align}
	P_n:H^0\to H^0=&\text{ the projection operator onto the eigenspace associated with}\\
	&\text{ the first $n$ eigenvalues (counting multiplicity) of $A^0$.}\nonumber
	\end{align}
Note that, in fact, $R(P_n)\subseteq H^2$, and, therefore, one can speak of $P_n$ as an operator $H^0\to H^0$ or $H^0\to H^2$ with little difference. In what follows, the domain and range of the operators $P_n$ will be understood by context.

Clearly, $\|P_n\|_{0,0}=1$ for all $n$. Moreover, for any $u\in H^0$,
	\be
	\|P_nu-u\|_0\to0
	\en
as $n\to\infty$ due to the definition of $P_n$.

\begin{lemma}\label{lemma-restrict}
For any $u\in H^k,\;k=1,2$,
	\be\label{eq:proj-conv}
	\|P_nu-u\|_k\to0
	\en
as $n\to\infty$.
\end{lemma}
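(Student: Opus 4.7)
The plan is to use the spectral decomposition of $A = A^0 = -\triangle + K$. Since $K$ is a bounded, selfadjoint, relatively compact perturbation of $-\triangle$, the operator $A$ is selfadjoint on $H^0$ with domain $H^2$ and has pure point spectrum $\mu_1 \leq \mu_2 \leq \cdots \to +\infty$. Let $\{\phi_j\}_{j \geq 1}$ be a corresponding orthonormal eigenbasis of $H^0$. For $u \in H^0$ with coefficients $c_j := \langle u, \phi_j\rangle_0$, we have $u = \sum_j c_j \phi_j$ in $H^0$ and $P_n u = \sum_{j=1}^n c_j \phi_j$.

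For $k=2$ I would exploit that the graph norm of $A$ is equivalent to $\|\cdot\|_2$ on $H^2$. Indeed, writing $-\triangle u = Au - Ku$ and using the $H^0$-boundedness of $K$, standard elliptic regularity on $\T^d$ gives constants $c_1, c_2 > 0$ with $c_1 \|u\|_2 \leq \|Au\|_0 + \|u\|_0 \leq c_2 \|u\|_2$. For $u \in H^2$ the sequence $(\mu_j c_j)_j$ lies in $\ell^2$ with $\sum_j \mu_j^2 |c_j|^2 = \|Au\|_0^2 < \infty$, hence
\begin{equation*}
\|A(u - P_n u)\|_0^2 = \sum_{j > n} \mu_j^2 |c_j|^2 \longrightarrow 0
\end{equation*}
as a tail of a convergent series, and similarly $\|u - P_n u\|_0 \to 0$. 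Combining these with the norm equivalence yields $\|u - P_n u\|_2 \to 0$.

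For $k=1$ I would pass to the form domain of $A$. Choose $M > \|K\|_{0,0}$ and set $\tilde{A} := A + M I$; then for every $u \in H^1$,
\begin{equation*}
\langle \tilde{A} u, u\rangle_0 = \|\nabla u\|_0^2 + \langle (K+MI)u, u\rangle_0 \geq \|\nabla u\|_0^2 + (M - \|K\|_{0,0})\|u\|_0^2,
\end{equation*}
and the reverse bound $\langle \tilde{A}u,u\rangle_0 \leq C\|u\|_1^2$ is immediate, so the form norm $\langle \tilde{A}\cdot,\cdot\rangle_0^{1/2}$ is equivalent to $\|\cdot\|_1$. Since $\tilde{A}$ has the same eigenfunctions $\phi_j$ with eigenvalues $\mu_j + M > 0$, the projection $P_n$ is also the spectral projection of $\tilde{A}$ on its first $n$ eigenfunctions. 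For $u \in H^1$ (the form domain of $\tilde{A}$) the quantity $\sum_j (\mu_j + M)|c_j|^2 = \langle \tilde{A}u, u\rangle_0$ is finite, so
\begin{equation*}
\|u - P_n u\|_1^2 \leq C \sum_{j > n} (\mu_j + M)|c_j|^2 \longrightarrow 0.
\end{equation*}
The only subtlety I anticipate is that $K$ need not be sign-definite, so one cannot estimate $\|u\|_1$ directly by the quadratic form of $A$ itself; the harmless shift by $M$ circumvents this. Everything else is a routine application of the spectral theorem for $A^0$.
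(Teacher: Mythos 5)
Your proof is correct. The $k=2$ half is essentially the paper's argument: both of you reduce $\|u-P_nu\|_2$ to $\|A(u-P_nu)\|_0+\|u-P_nu\|_0$ via elliptic regularity and the boundedness of $K$, and then kill the first term as the tail of the convergent series $\sum_j\mu_j^2|c_j|^2=\|Au\|_0^2$. Where you genuinely diverge is the $k=1$ case. The paper disposes of it in one line by interpolating between the already-established $H^0$ and $H^2$ convergences ``using the Fourier representation of Sobolev spaces''; made precise, that route requires first extracting a uniform bound $\sup_n\|P_n\|_{1,1}<\infty$ (e.g.\ from $\|P_n\|_{1,1}\le\|P_n\|_{0,0}^{1/2}\|P_n\|_{2,2}^{1/2}$) and then combining it with density of $H^2$ in $H^1$. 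You instead work directly in the form domain: after the harmless shift $\tilde A=A+M$ with $M>\|K\|_{0,0}$, the quadratic form of $\tilde A$ is equivalent to the $H^1$ norm, $P_n$ is still the spectral projection of $\tilde A$, and $\|u-P_nu\|_1^2\lesssim\sum_{j>n}(\mu_j+M)|c_j|^2$ is a tail of a convergent series for every $u$ in the form domain $H^1$. Your version is more self-contained and makes explicit the point the paper leaves implicit (that $K$ is not sign-definite, hence the shift), at the cost of invoking the form version of the spectral theorem; the paper's interpolation is shorter but leans on an unstated uniform-boundedness-plus-density step. Either argument is acceptable.
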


\begin{proof}
We first consider the case $k=2$. Let $(\alpha_i,v_i)$ be eigenvalue-eigenvetor pairs of $A$ in ascending order (counting multiplicity). Let $u\in H^2$ and suppose that $u=\sum_{j=1}^\infty a_jv_j$ in $H^0$. We wish to show that $\lim_{n\to\infty}\|\sum_{j=n}^\infty a_jv_j\|_2\to0$. We write, with some Poincar\'{e} constant $\kappa$,
	$$
	\left\|\sum\nolimits_{j=n}^\infty a_jv_j\right\|_2
	\leq
	\kappa\left\|\triangle\sum\nolimits_{j=n}^\infty a_jv_j\right\|_0
	+
	\left\|\sum\nolimits_{j=n}^\infty a_jv_j\right\|_0.
	$$
The second term tends to $0$ since $\|P_nu-u\|_0\to0$, and the first term can be bounded by
	$$
	\kappa\left\|K\sum\nolimits_{j=n}^\infty a_jv_j\right\|_0+\kappa\left\|A\sum\nolimits_{j=n}^\infty a_jv_j\right\|_0.
	$$
The first of these terms is bounded by $C\kappa\left\|\sum\nolimits_{j=n}^\infty a_jv_j\right\|_0$ which again tends to $0$. Considering the square of the second term, we have
	$$
	\kappa^2\left\|A\sum\nolimits_{j=n}^\infty a_jv_j\right\|_0^2
	=
	\kappa^2\left\|\sum\nolimits_{j=n}^\infty a_j\alpha_jv_j\right\|_0^2
	=
	\kappa^2\sum\nolimits_{j=n}^\infty\left|a_j\alpha_j\right|^2
	\to0
	$$
as $n\to\infty$ since $Au\in H^0$. Therefore we deduce that $\|P_nu-u\|_2\to0$. Since we also know that for any $v\in H^0$, $\|P_nv-v\|_0\to0$, we can conclude that for any $w\in H^1$, $\|P_nw-w\|_1\to0$ by interpolation, by using the Fourier representation of Sobolev spaces.
\end{proof}

For our next lemma, we recall the definition of $H^{-1}\supseteq H^0$ as the dual space of $H^1$ 
	$$
	H^{-1}=(H^1)^*=\{g\;|\;g\text{ is a bounded linear functional on }H^1\}
	$$
via the scalar product $\left<f,g\right>_0$ in $H^0$. Suppose that $f\in H^1$. Then
	\be
	\left|\left<f,g\right>_0\right|\leq\|f\|_1\|g\|_0
	\en
and therefore one may think of $g$ as being an element of $H^{-1}$ with
	\be
	\|g\|_{-1}=\inf_{C>0}\left\{\left|\left<f,g\right>_0\right|\leq C\|f\|_1\right\}\leq\|g\|_0.
	\en

\begin{lemma}\label{h-1}
One can extend the definition of $P_n$ to map $H^{-1}\to H^{-1}$, the extensions $\bar{P}_n$ remain uniformly bounded, and for any $v\in H^{-1}$, $\|\bar{P}_nv-v\|_{-1}\to0$ as $n\to\infty$.
\end{lemma}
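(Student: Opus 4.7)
The plan is to use duality between $H^1$ and $H^{-1}$ together with the fact that $P_n$ is selfadjoint on $H^0$. For $v\in H^{-1}$, I would define $\bar{P}_n v$ as the functional
\[
u\mapsto \left<v, P_n u\right>, \qquad u\in H^1,
\]
where $\langle\cdot,\cdot\rangle$ denotes the $H^{-1}$--$H^1$ pairing induced by $\left<\cdot,\cdot\right>_0$. This is well-defined because Lemma \ref{lemma-restrict} guarantees that $P_n:H^1\to H^1$ is bounded, so the right-hand side is a bounded linear functional on $H^1$ with $\|\bar{P}_n v\|_{-1}\leq \|P_n\|_{1,1}\|v\|_{-1}$. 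To see that $\bar P_n$ genuinely extends $P_n$ from $H^0$, note that for $v\in H^0\subseteq H^{-1}$ the selfadjointness of $P_n$ on $H^0$ gives $\left<v,P_n u\right>_0=\left<P_n v, u\right>_0$ for every $u\in H^1\subseteq H^0$, so $\bar P_n v$ and $P_n v$ represent the same functional on $H^1$.

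Uniform boundedness then reduces to uniform boundedness in the $H^1\to H^1$ norm. Lemma \ref{lemma-restrict} states that $P_n\to I$ strongly in $H^1$, so by the uniform boundedness principle $\sup_n \|P_n\|_{1,1}<\infty$; combined with the estimate above this yields a constant $C$ with $\sup_n \|\bar P_n\|_{-1,-1}\leq C$.

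For strong convergence I would proceed by a standard density argument. First, for $v\in H^0$ the continuous embedding $H^0\hookrightarrow H^{-1}$ gives
\[
\|\bar P_n v-v\|_{-1}=\|P_n v-v\|_{-1}\leq \|P_n v-v\|_0\to 0.
\]
Since trigonometric polynomials lie in $H^0$ and are dense in $H^{-1}(\T^d)$, the subspace $H^0$ is dense in $H^{-1}$. Given $v\in H^{-1}$ and $\epsilon>0$, pick $v_\epsilon\in H^0$ with $\|v-v_\epsilon\|_{-1}<\epsilon$ and apply the triangle inequality
\[
\|\bar P_n v-v\|_{-1}\leq \|\bar P_n(v-v_\epsilon)\|_{-1}+\|\bar P_n v_\epsilon-v_\epsilon\|_{-1}+\|v_\epsilon-v\|_{-1}\leq (C+1)\epsilon+\|\bar P_n v_\epsilon-v_\epsilon\|_{-1}.
\]
The last term tends to $0$ by the $H^0$-case, and $\epsilon$ is arbitrary.

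The main obstacle is really only conceptual: one must verify that the duality-defined operator $\bar P_n$ is consistent with $P_n$ on $H^0$, which rests entirely on the selfadjointness of $P_n$ in $H^0$; the remaining ingredients (uniform boundedness via UBP, density of $H^0$ in $H^{-1}$, and the three-epsilon argument) are routine once Lemma \ref{lemma-restrict} is in hand.
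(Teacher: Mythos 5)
Your proposal is correct and follows essentially the same route as the paper: define $\bar P_n$ as the adjoint of $P_n:H^1\to H^1$, use $H^0$-selfadjointness of $P_n$ to check consistency on $H^0\subseteq H^{-1}$, get uniform bounds from $\|\bar P_n\|_{-1,-1}=\|P_n\|_{1,1}$ together with Lemma \ref{lemma-restrict}, and conclude strong convergence by density of $H^0$ in $H^{-1}$. The only difference is that you spell out the three-epsilon density argument which the paper leaves implicit.
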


\begin{proof}
By Lemma \ref{lemma-restrict}, there exists some $C$ such that $\sup_n\|P_n\|_{1,1}<C$. Consider the adjoint operator to $P_n:H^1\to H^1$ denoted by $\bar{P}_n=(P_n)^*:H^{-1}\to H^{-1}$. It is continuous, and $\|\bar{P}_n\|_{-1,-1}=\|P_n\|_{1,1}$ (see \cite[III-\S3.3]{kato}). Hence $\sup_n\|\bar{P}_n\|_{-1,-1}<C$.

Let us try to get a better understanding of the action of $\bar{P}_n$ on $H^{-1}$. Consider the $\left<H^1,H^{-1}\right>$ pairing $\left<f,\bar{P}_ng\right>$ with $f\in H^1$ and $g\in H^{0}$ thought of as an element of $H^{-1}$:
	\be
	\left<f,\bar{P}_ng\right>=\left<P_nf,g\right>=\left<P_n f,g\right>_0=\left<f,P_n g\right>_0.
	\en
Hence, on the subspace $H^0\subseteq H^{-1}$ it holds that $\bar{P}_n=P_n$, and, therefore, $\bar{P}_n$ is an extension of $P_n$. Letting $g\in H^0$, we have $\|\bar{P}_ng-g\|_{-1}\leq\|P_ng-g\|_0\to0$. Since $H^0$ is dense in $H^{-1}$, we may replace $g\in H^0$ by any element $v\in H^{-1}$ and obtain $\|\bar{P}_nv-v\|_{-1}\to0$.
\end{proof}

Since we have established that $\bar{P}_n$ and $P_n$ have similar behavior, in what follows we shall refer only to $P_n$, and the precise domain and range will be understood from context.

Define the \emph{truncated operator}	
	\be
	A^\l_n:=P_nA^\l P_n
	\en
to be the spectral projection in $H^0$ of the operator $A^\l$ onto the eigenspace associated with the first $n$ eigenvalues (counting multiplicity) of $A^0$. Thus, $A_n^\l$ depends upon two parameters. We are interested in its behavior as $n\to\infty$ and $\l\to0$.

For brevity, we shall write $A_n=A_n^0$.

%

\begin{prop}[Resolvent set]\label{uniform}
Let $\sigma\in\rho(A)$. Then there exist $N=N(\sigma)>0$ and $\l^*=\l^*(\sigma)>0$ such that $\sigma\in\rho(A_n^\la)$ for all $n>N$ and for all $0<\l<\l^*$.
\end{prop}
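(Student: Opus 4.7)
The plan is to exploit two facts. First, because $P_n$ is the spectral projection of $A=A^0$ onto the span of its first $n$ eigenvectors, $P_n$ commutes with $A$, hence with $(A-\sigma)^{-1}$, and $R(P_n)$ is $A$-invariant. Second, $A$ has compact resolvent, which promotes the merely strong convergence $K^\l\xrightarrow{s}K$ into norm convergence of $(A-\sigma)^{-1}(K^\l-K)$; on the invariant subspace $R(P_n)$ this makes $A^\l_n-\sigma$ a small norm-perturbation of the invertible operator $A-\sigma$, so a Neumann series closes the argument.

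I would first record the structural facts. Since $A$ is self-adjoint on $H^0$ with pure point spectrum tending to $+\infty$ (and finite-dimensional eigenspaces), $(A-\sigma)^{-1}\in\mathcal{L}(0,0)$ is compact whenever $\sigma\in\rho(A)$. Being the spectral projection of $A$, $P_n$ commutes with every bounded Borel function of $A$; in particular $A(R(P_n))\subseteq R(P_n)$ and
\[
(A-\sigma)^{-1}P_n=P_n(A-\sigma)^{-1}.
\]

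Next I would establish the norm estimate
\[
\bigl\|(A-\sigma)^{-1}(K^\l-K)\bigr\|_{0,0}\xrightarrow[\l\to 0]{}0.
\]
By taking adjoints (legitimate because $K^\l$ and $K$ are self-adjoint by \eqref{H1}), this is equivalent to showing $(K^\l-K)(A-\bar\sigma)^{-1}\to 0$ in operator norm. Here $K^\l-K$ is uniformly bounded by \eqref{H1} and strongly null by \eqref{H2}, while $(A-\bar\sigma)^{-1}$ is compact; the standard fact that a uniformly bounded, strongly null family composed with a compact operator converges in operator norm --- the compact factor carries the unit ball to a precompact set on which strong convergence is uniform --- then closes this step.

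With the estimate in hand, for $u\in R(P_n)$ I would use $P_n u=u$, $Au\in R(P_n)$, and the commutation to write
\[
(A^\l_n-\sigma)u = (A-\sigma)u+P_n(K^\l-K)u = (A-\sigma)\bigl[I+P_n(A-\sigma)^{-1}(K^\l-K)\bigr]u.
\]
The first factor is a bijection of $R(P_n)$ onto itself since $\sigma\in\rho(A)$, and the bracketed factor differs from the identity by an operator of norm at most $\|(A-\sigma)^{-1}(K^\l-K)\|_{0,0}$, so it is invertible on all of $H^0$ (and therefore on the invariant finite-dimensional subspace $R(P_n)$) by a Neumann series once $\l<\l^*$ is small enough. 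This yields $\sigma\in\rho(A^\l_n)$ for every $n\geq 1$ and every $0<\l<\l^*$ (so the parameter $N$ in the statement can in fact be taken to be zero under this approach). The main obstacle is the norm-convergence step: strong convergence of $K^\l$ to $K$ alone is too weak, and one must genuinely exploit the compactness of $(A-\sigma)^{-1}$ together with the self-adjointness of the perturbations to pass to the adjoint and invoke ``compact-on-the-left''.
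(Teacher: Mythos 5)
Your argument is correct, but it takes a genuinely different route from the paper's. The paper proceeds by contradiction and compactness: it extracts normalized eigenfunctions $u_{n_k}$ of $A_{n_k}^{\l_k}$, derives a uniform $H^1$ bound, passes to $L^2$-strong and $H^{-1}$-weak limits (Rellich), and bootstraps by elliptic regularity to produce an eigenfunction of $A$ with eigenvalue $\sigma$. You instead give a direct perturbation argument: since $P_n$ is the spectral projection of the unperturbed operator $A=A^0$, it commutes with $(A-\sigma)^{-1}$, and on the invariant subspace $R(P_n)$ you factor $A_n^\l-\sigma=(A-\sigma)\bigl[I+P_n(A-\sigma)^{-1}(K^\l-K)\bigr]$; the key analytic input is that the compactness of $(A-\bar\sigma)^{-1}$, combined with self-adjointness of the $K$'s to flip the order via adjoints, upgrades the merely strong convergence $K^\l\xrightarrow{s}K$ to the norm convergence $\|(A-\sigma)^{-1}(K^\l-K)\|_{0,0}\to0$, after which a Neumann series closes the argument. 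Your approach buys more: it is quantitative, gives a resolvent bound $\|(A_n^\l-\sigma)^{-1}\|\leq\|(A-\sigma)^{-1}\|/(1-\delta)$ uniform in $n$, and shows $N$ can be taken arbitrarily (the paper's compactness argument only yields the existence of some $N$). The trade-off is that your proof leans on two structural features absent in the paper's argument: the commutation $P_n(A-\sigma)^{-1}=(A-\sigma)^{-1}P_n$ and the self-adjointness of the perturbation. For this reason the paper's compactness route transfers essentially verbatim to the block-matrix version (Proposition \ref{uniform2}), where $P_n\oplus Q_n$ is built from the spectral projections of the diagonal entries $A_1^0$, $A_2^0$ and therefore does \emph{not} commute with $M^0$ (the off-diagonal block $K^0$ obstructs it), and where $\Sigma(M_n^0)\not\subseteq\Sigma(M^0)$, so the unperturbed truncation is not automatically invertible at $\sigma$; your factorization would need genuine modification there. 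One last cosmetic point, which affects the paper as much as you: for $\sigma=0$ the operator $A_n^\l$ on all of $H^0$ always has the infinite-dimensional kernel $R(P_n)^\perp$, so the statement must be read (as you implicitly do) as invertibility of the restriction to $R(P_n)$ — your proof establishes exactly that, which is what is used downstream.
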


\begin{proof}
We prove it by contradiction. If the claim were not true, then for any $k$ there would exist $n_k>k$ and $0<\l_k<\frac{1}{k}$ such that $\sigma\in\Sigma(A_{n_k}^{\l_k})$. Hence, if we abuse notation and eliminate the $k$'s, there exist $0\neq u_n\in D$ such that
	\be\label{eig-eq}
	A_n^{\l_n} u_n=\sigma u_n
	\en
where $\l_n\to0$ as $n\to\infty$. Normalize $\|u_n\|_{0}=1$ and multiply the equation \eqref{eig-eq} by $u_n$ (taking $H^0$ inner product) to get
	\be
	\left<A_n^{\l_n} u_n,u_n\right>_{0}=\sigma,
	\en
which we rewrite as
	\be\label{h1-bound}
	\|\nabla P_nu_n\|^2_0=\sigma-\left<K^{\l_n}P_nu_n,P_nu_n\right>_0.
	\en
We readily estimate $\left|\left<K^{\l_n}P_nu_n,P_nu_n\right>_0\right|\leq\|K^{\l_n}\|_{0,0}\|u_n\|_0^2\leq C$, hence $\|P_nu_n\|_{1}\leq \sigma+C<\infty$. Therefore there exists a subsequence of $P_nu_n$ that converges in $L^2$ to some $u\neq0$ (for simplicity we do not change the index):
	\be\label{converge1}
	\lim_{n\to\infty}\|P_nu_n-u\|_{0}=0.
	\en
In addition, since $P_nu_n$ is bounded in $H^1$ it has a weakly convergent subsequence in $H^1$ which necessarily converges to $u$: $P_nu_n\xrightarrow{w}u$ in $H^1$ (we keep the same index $n$), hence $u\in H^1$. Therefore
	\be\label{eq:weak-h-1}
	\triangle  P_nu_n\xrightarrow{w}\triangle u
	\en
in $H^{-1}$. By Lemma \ref{h-1}, $P_n\xrightarrow{s}I$ in $H^{-1}$, and one \emph{would like} to conclude that $P_n\triangle P_nu_n\xrightarrow{w}\triangle u$ in $H^{-1}$. While the conclusion is true, showing it is somewhat more delicate: considering the $\left<H^1,H^{-1}\right>$ pairing for some $v\in H^1$ we have
	\begin{align*}
	\left<v,P_n\triangle P_nu_n-P_n\triangle u\right>
	&=
	\left<P_nv,\triangle P_nu_n-\triangle u\right>\\
	&=
	\left<v,\triangle P_nu_n-\triangle u\right>+\left<(P_n-I)v,\triangle P_nu_n-\triangle u\right>\to0
	\end{align*}
due to \eqref{eq:weak-h-1} for the first term, and since $P_n\xrightarrow{s} I$ in $H^1$ for the second term (recall that due to \eqref{eq:weak-h-1}, $\|\triangle P_nu_n-\triangle u\|_{-1}$ is uniformly bounded). We therefore have that $P_n\triangle P_nu_n-P_n\triangle u\xrightarrow{w}0$ in $H^{-1}$. Since $P_n\triangle u\xrightarrow{s}\triangle u$, we conclude that, indeed,
	\be
	P_n\triangle P_nu_n\xrightarrow{w}\triangle u
	\en
in $H^{-1}$.
We now consider \eqref{eig-eq} again:
	\be\label{eig-eq2}
	-P_n\triangle P_nu_n+P_nK^{\l_n}P_nu_n=\sigma u_n.
	\en
Recalling \eqref{H2} and using Lemma \ref{uniform-bound}, we have that $P_nK^{\l_n}P_nu_n\to Ku$ in $H^0$. In addition, we claim that $\sigma u_n\to \sigma u$ in $H^0$. Indeed, if $\sigma\neq0$, then by the structure of \eqref{eig-eq2} $P_nu_n=u_n$. If $\sigma=0$ the claim is trivially true. Therefore, letting $n\to\infty$, all three terms above converge weakly in the $H^{-1}$ sense to
	\be
	-\triangle u+Ku=\sigma u.
	\en
But since $u$ and $Ku$ are elements of $H^0$, one can bootstrap (by elliptic regularity) and conclude that $0\neq u\in H^2$, which contradicts the fact that $\sigma\in\rho(A)$.
\end{proof}

\begin{cor}\label{cor:limit}
If there exist sequences $n_k\to\infty$, and $\l_k\to0$ such that $\sigma\in\Sigma(A_{n_k}^{\l_k})$ for all $k$, then $\sigma\in\Sigma(A)$.
\end{cor}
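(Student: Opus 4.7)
The corollary is exactly the contrapositive of Proposition \ref{uniform}, so the plan is a direct contradiction argument with no new analytic content required. Suppose for contradiction that $\sigma \in \rho(A)$. Proposition \ref{uniform} then supplies $N = N(\sigma) > 0$ and $\l^* = \l^*(\sigma) > 0$ such that $\sigma \in \rho(A_n^\l)$ for every $n > N$ and every $0 < \l < \l^*$. Since $n_k \to \infty$ and $\l_k \to 0$, for all sufficiently large $k$ we have $n_k > N$ and $\l_k < \l^*$. Taking any such $k$ for which $\l_k > 0$ yields $\sigma \in \rho(A_{n_k}^{\l_k})$, directly contradicting the hypothesis $\sigma \in \Sigma(A_{n_k}^{\l_k})$.

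The one minor bookkeeping point is that Proposition \ref{uniform} is stated with the strict inequality $0 < \l$, whereas the hypothesis $\l_k \to 0$ does not forbid $\l_k = 0$ for some (or infinitely many) $k$. To handle this, one inspects the proof of Proposition \ref{uniform}: the parameter $\l_n$ enters only through the strong convergence $K^{\l_n} \xrightarrow{s} K$ invoked via Lemma \ref{uniform-bound}, and this convergence is trivially satisfied when $\l_n \equiv 0$ (both operators coincide with $K$). Hence the conclusion of Proposition \ref{uniform} extends without modification to $0 \leq \l < \l^*$, which closes the remaining case.

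The main ``obstacle'' is therefore purely cosmetic: once this extension to the boundary value $\l = 0$ is observed, the corollary follows in one line. All of the substantive work — the uniform-boundedness estimate yielding an $H^1$ bound on $P_n u_n$, the weak-to-strong bootstrapping of $P_n\triangle P_n u_n$ in $H^{-1}$, and the final elliptic-regularity upgrade to $H^2$ — has already been carried out inside Proposition \ref{uniform}.
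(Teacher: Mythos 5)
Your proof is correct and is exactly what the paper intends: the corollary is stated without proof as the immediate contrapositive of Proposition \ref{uniform}, which is precisely your argument. Your extra remark about the boundary case $\l_k=0$ is a harmless (and accurate) bookkeeping observation, but not needed where the corollary is applied, since there the sequences satisfy $\l_k>0$.
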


\begin{prop}\label{first-prop}
Suppose that $0\in\rho(A)$. Let $\l^*=\l^*(0)$ and $N=N(0)$ be as in Proposition \ref{uniform}. Then for all $0<\l<\l^*$ and for all $n>N$, $neg(A_n^\l)=neg(A_n)$.
\end{prop}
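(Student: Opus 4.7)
The plan is to reduce to a finite-dimensional problem on $R(P_n)$ and combine continuity of eigenvalues in $\l$ with the fact, supplied by Proposition \ref{uniform}, that $0$ lies in $\rho(A_n^\l)$ throughout the parameter range $[0,\l^*)$. Since $A_n^\l = P_n A^\l P_n$ vanishes on $R(P_n)^\perp$, all of its negative spectrum coincides with that of the restriction $A_n^\l|_{R(P_n)}$, which acts on the $n$-dimensional subspace $R(P_n)\subseteq H^2$. I would fix the orthonormal basis $\{v_1,\dots,v_n\}$ of $R(P_n)$ consisting of the first $n$ eigenvectors of $A=A^0$, with corresponding eigenvalues $\alpha_1,\dots,\alpha_n$. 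In this basis, $A_n^\l|_{R(P_n)}$ is represented by the self-adjoint $n\times n$ matrix
\[
M^\l_{ij} = \left\langle -\triangle v_i + K^\l v_i,\, v_j\right\rangle_0 = \alpha_i\,\delta_{ij} + \left\langle (K^\l - K) v_i,\, v_j\right\rangle_0.
\]

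Next I would verify that each entry $M^\l_{ij}$ depends continuously on $\l\in[0,\l^*)$. This is immediate from hypothesis \eqref{H2}: at any $\sigma\in[0,\l^*)$ one has $\|(K^\l - K^\sigma)v_i\|_0 \to 0$ as $\l\to\sigma$, whence
\[
\left|\left\langle (K^\l - K^\sigma)v_i,\, v_j\right\rangle_0\right|
\le \|(K^\l - K^\sigma)v_i\|_0\,\|v_j\|_0 \to 0.
\]
Hence the characteristic polynomial of $M^\l$ has coefficients that are continuous in $\l$, and because $M^\l$ is self-adjoint its $n$ real eigenvalues can be labeled so as to depend continuously on $\l$ on the whole interval $[0,\l^*)$ (by the min-max principle, or equivalently by standard finite-dimensional perturbation theory).

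Finally I would apply Proposition \ref{uniform} with $\sigma = 0$: for every $n>N$ and every $\l\in(0,\l^*)$ we have $0\in\rho(A_n^\l)$, so $0$ is not an eigenvalue of $M^\l$. At $\l=0$ the matrix $M^0$ is diagonal with entries $\alpha_1,\dots,\alpha_n$, and the assumption $0\in\rho(A)$ forces $\alpha_i\neq 0$ for every $i$, so $0$ is not an eigenvalue of $M^0$ either. Consequently no continuously varying eigenvalue of $M^\l$ can cross $0$ for any $\l\in[0,\l^*)$, and the number of negative eigenvalues of $M^\l$ is locally, hence globally, constant on this connected interval. This gives $neg(A_n^\l) = neg(A_n)$ for all $n>N$ and $0<\l<\l^*$. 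No step is particularly deep; the only mildly delicate point is the continuity of the eigenvalues at the left endpoint $\l = 0$, but this follows directly from the continuity of the matrix entries together with standard matrix perturbation theory.
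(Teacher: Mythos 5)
Your proof is correct and follows essentially the same route as the paper: continuity in $\l$ of the spectrum of the finite-dimensional truncation $A_n^\l|_{R(P_n)}$ (via \eqref{H2}), combined with Proposition \ref{uniform} at $\sigma=0$ to rule out any eigenvalue crossing zero on $[0,\l^*)$. The only cosmetic difference is that you argue directly with the explicit matrix representation, while the paper runs the equivalent argument by contradiction through Corollary \ref{cor:limit}, which is just the contrapositive of Proposition \ref{uniform}.
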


\begin{proof}
We first remark that since $P_n$ is the orthogonal projection onto an eigenspace associated to eigenvalues of the operator $A$, it holds that $\Sigma(A_n)\subseteq\Sigma(A)$. Since $A$ has only finitely many negative eigenvalues, and since $P_n$ is the projection onto the first $n$ eigenvalues of $A$, it also holds that $neg(A)=neg(A_n)$ for all $n$ sufficiently large. The benefit of using the projection operators $P_n$ is that now we only need to deal with the family of operators $A_n^\l$ acting on finite-dimensional subspaces of $H^2$. The family of operators $A_n^\l$ is continuous in $\l$ for $0\leq\l<1$. Here we used the simple fact that all norms on $\R^N$ are equivalent, combined with the hypothesis \eqref{H2}. Therefore the set of eigenvalues $\Sigma(A_n^\l)$ varies continuously in $\l$, for all $0\leq\l<1$ and fixed $n$.

To prove the proposition we again argue by contradiction: If the assertion were not true, then there would exist a sequence $\l_{n_k}\to0$ and a sequence $n_k\to\infty$ such that $neg(A_{n_k}^{\l_{n_k}})\neq neg(A_{n_k})$. We drop the $k$ subscript for notational simplicity. Fix $n$. Since $\Sigma(A_n^\l)$ varies continuously in $\l$ for all $0\leq\l<1$, $neg(A_n^\l)$ can have jumps only for $\l_n'$ for which $0\in\Sigma(A_n^{\l_n'})$. Therefore, still fixing $n$, the inequality $neg(A_n^{\l_n})\neq neg(A_n)$ implies that there must exist $0<\l_n'<\l_n$ for which $0\in\Sigma(A_n^{\l_n'})$. By Corollary \ref{cor:limit}, $0\in\Sigma(A)$, which is a contradiction.
\end{proof}

\begin{prop}\label{second-prop}
Suppose that $0\in\Sigma(A)$. There exist $\l^*$ and $N$ such that for all $0<\l<\l^*$ and for all $n>N$, $neg(A_n^\l)\geq neg(A_n)$.
\end{prop}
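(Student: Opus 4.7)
The plan is to reduce Proposition \ref{second-prop} to the already proved Proposition \ref{first-prop} by means of a small positive shift in the spectral parameter. Since $A$ has pure point spectrum accumulating only at $+\infty$ (so in particular $\Sigma(A)$ is discrete), the eigenvalue $0\in\Sigma(A)$ is isolated, and we may pick $\epsilon^*>0$ small enough that $\Sigma(A)\cap(0,\epsilon^*]=\emptyset$ and $\Sigma(A)\cap[-\epsilon^*,0)=\emptyset$. In particular $-\epsilon\in\rho(A)$ for every $0<\epsilon\leq\epsilon^*$, i.e.\ $0\in\rho(A+\epsilon I)$.

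Fix such an $\epsilon\in(0,\epsilon^*)$ and introduce the shifted family $\tilde K^\l:=K^\l+\epsilon I$. Clearly $\tilde K^\l$ still satisfies \eqref{H1} and \eqref{H2}, since $\epsilon I$ is selfadjoint, uniformly bounded in $\l$, and independent of $\l$. The associated Schr\"odinger operator is $\tilde A^\l:=-\triangle+\tilde K^\l=A^\l+\epsilon I$, and by the previous paragraph $0\in\rho(\tilde A)$. Proposition \ref{first-prop} applied to this shifted family then supplies $\l^*=\l^*(\epsilon)>0$ and $N_1=N_1(\epsilon)$ such that
$$
neg\bigl(\tilde A_n^\l\bigr)=neg\bigl(\tilde A_n\bigr)
\qquad\text{for all }0<\l<\l^*,\ n>N_1.
$$
Since $P_n$ is an orthogonal projection, $\tilde A_n^\l=P_n(A^\l+\epsilon I)P_n=A_n^\l+\epsilon P_n$, which acts as $A_n^\l+\epsilon I$ on the finite-dimensional subspace $R(P_n)$, where the negative spectrum is computed.

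The conclusion will then follow by comparing negative-eigenvalue counts with and without the shift. On the one hand, adding $\epsilon I$ shifts every eigenvalue up by $\epsilon>0$, so trivially
$$
neg\bigl(A_n^\l+\epsilon I\bigr)\leq neg\bigl(A_n^\l\bigr).
$$
On the other hand, because $P_n$ is the spectral projection onto the first $n$ eigenvalues of $A$, the eigenvalues of $A_n$ are precisely the first $n$ eigenvalues of $A$; enlarging $N$ beyond some $N_2$ we ensure that these include all (finitely many) strictly negative eigenvalues of $A$ together with the entire kernel. By our choice of $\epsilon<\epsilon^*$, shifting by $\epsilon$ leaves every strictly negative eigenvalue of $A$ still strictly negative (since the largest negative eigenvalue is $\leq-\epsilon^*<-\epsilon$) while sending each zero eigenvalue to $+\epsilon>0$. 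Hence $neg(A_n+\epsilon I)=neg(A_n)$ for $n>N_2$. Taking $N:=\max(N_1,N_2)$ and chaining the two inequalities with the equality from Proposition \ref{first-prop} yields
$$
neg\bigl(A_n^\l\bigr)\;\geq\;neg\bigl(A_n^\l+\epsilon I\bigr)\;=\;neg\bigl(A_n+\epsilon I\bigr)\;=\;neg\bigl(A_n\bigr),
$$
which is exactly the claim. The only subtle point is the choice of $\epsilon^*$, which requires the discreteness of $\Sigma(A)$ together with finiteness of the negative spectrum of $A$; both facts are already established in the excerpt and nothing beyond Proposition \ref{first-prop} is needed.
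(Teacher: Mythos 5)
Your argument is correct, and it is genuinely different from the paper's. The paper proves Proposition \ref{second-prop} directly, by choosing $\sigma<0$ in $\rho(A)$ with $[\sigma,0)\cap\Sigma(A)=\emptyset$ and running the same contradiction/crossing argument as in Proposition \ref{first-prop}: if $neg(A_n^{\l_n})<neg(A_n)$ for some $\l_n\to0$, an eigenvalue must cross $0$ from left to right, hence must also cross $\sigma$, and Corollary \ref{cor:limit} then forces $\sigma\in\Sigma(A)$, a contradiction. You instead reduce to Proposition \ref{first-prop} as a black box via the shift $K^\l\mapsto K^\l+\epsilon I$, which manifestly preserves \eqref{H1}--\eqref{H2}, and then compare eigenvalue counts before and after the shift. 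All the steps check out: the spectral projections $P_n$ of $A$ and of $A+\epsilon I$ coincide (same eigenvectors, same ordering), so the truncation commutes with the shift; $neg(A_n^\l+\epsilon I)\leq neg(A_n^\l)$ is trivial on the finite-dimensional range of $P_n$; and your choice of $\epsilon<\epsilon^*$ guarantees $neg(A_n+\epsilon I)=neg(A_n)$, since the strictly negative eigenvalues of $A$ all lie below $-\epsilon^*$ while the kernel is pushed to $+\epsilon$. (The condition $\Sigma(A)\cap(0,\epsilon^*]=\emptyset$ is not actually needed; only the gap $[-\epsilon^*,0)$ matters.) What your route buys is economy: the crossing argument is invoked only once, inside Proposition \ref{first-prop}. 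What the paper's route buys is that it stays with the original family and never has to re-verify hypotheses or worry about whether the projections change under the shift; it also makes transparent that the one-sided inequality (as opposed to equality) comes precisely from the possibility that the zero eigenvalue of $A$ moves to the left as $\l$ increases. Both arguments ultimately rest on the same ingredient, namely Proposition \ref{uniform} and its Corollary \ref{cor:limit}.
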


\begin{proof}
The proof of this assertion is very similar in nature to the proof of Proposition \ref{first-prop}. The claim is trivial if $neg(A)=0$. Therefore we assume that $neg(A)\geq1$. Let $\sigma\in\rho(A)$ satisfy
\begin{enumerate}
\item
$\sigma<0$
\item
$[\sigma,0)\cap\Sigma(A)=\emptyset$
\end{enumerate}
Then there exist $\l^*=\l^*(\sigma)$ and $N=N(\sigma)$ as in Proposition \ref{uniform} such that $\sigma\in\rho(A_n^\l)$ for all $n>N$ and $0<\l<\l^*$. If the assertion of this proposition were not true, then there would exist a sequence $\l_n\to0$ and $n\to\infty$ such that $neg(A_n^{\l_n})< neg(A_n)$. That is, for fixed $n$, an eigenvalue of $\Mnl$ must cross $0$ from left to right as $\l$ varies from 0 to $\l_n$, due to the continuous dependence of $\Sigma(A_n^\l)$ on $\l$ (remember that the spectrum is real!). In particular, there would have to exist some $0<\l_n'<\l_n$ for which $\sigma\in\Sigma(A_n^{\l_n'})$. Once again, by Corollary \ref{cor:limit} this implies that $\sigma\in\Sigma(A)$, which is a contradiction. This proves that $neg(A_n^\l)\geq neg(A_n)$.
\end{proof}

In fact, we will now discuss how the results of this section can be modified to handle operators of the form
	\be
	M^\l
	=
	\mattwo{A_1^\l}{K^\l}{(K^\l)^*}{-A_2^\l}
	\en
with $A_i^\l=-\triangle+K_i^\l$, and $K^\l,K_1^\l,K_2^\l$ endowed with the same hypotheses \eqref{H1} and \eqref{H2}. In this case, one has to define two families of projection operators, $P_n$ and $Q_n$ (with similar properties as those of $P_n$ discussed above), associated to $A_1$ and $A_2$ respectively, to form:
	\be
	M^\l_n
	=
	\mattwo{P_nA_1^\l P_n}{P_nK^\l Q_n}{Q_n(K^\l)^*P_n}{-Q_nA_2^\l Q_n}.
	\en

Let us show, for instance, how Proposition \ref{uniform} may be restated:

\begin{prop}\label{uniform2}
Let $\sigma\in\rho(M^0)$. Then there exist $N=N(\sigma)>0$ and $\l^*=\l^*(\sigma)>0$ such that $\sigma\in\rho(M_n^\l)$ for all $n>N$ and for all $0<\l<\l^*$.
\end{prop}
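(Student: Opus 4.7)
The plan is to follow the contradiction scheme of Proposition \ref{uniform} componentwise, adapting the energy estimate to accommodate the indefinite diagonal of $M^\l$. Assuming the conclusion fails, I would extract sequences $n_k\to\infty$, $\l_k\to 0$ and normalized eigenvectors $u_{n_k}=(u_{n_k}^1,u_{n_k}^2)\in R(P_{n_k})\oplus R(Q_{n_k})$ satisfying $M_{n_k}^{\l_k}u_{n_k}=\sigma u_{n_k}$. Dropping the index $k$ and expanding the matrix equation, this reads
\begin{align*}
-\triangle P_n u_n^1+P_n K_1^{\l_n}P_n u_n^1+P_n K^{\l_n} Q_n u_n^2 &= \sigma u_n^1,\\
Q_n(K^{\l_n})^* P_n u_n^1+\triangle Q_n u_n^2-Q_n K_2^{\l_n} Q_n u_n^2 &= \sigma u_n^2.
\end{align*}

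The main obstacle relative to Proposition \ref{uniform} is that simply pairing the system with $u_n$ produces $\|\nabla P_n u_n^1\|_0^2-\|\nabla Q_n u_n^2\|_0^2+O(1)=\sigma$, which yields no $H^1$ bound on $u_n^2$. The remedy is to pair the first equation with $u_n^1$ but the second with $-u_n^2$. After adding and taking real parts, the two off-diagonal contributions $\langle K^{\l_n}Q_n u_n^2,P_n u_n^1\rangle_0$ and $-\langle (K^{\l_n})^*P_n u_n^1,Q_n u_n^2\rangle_0$ are complex conjugates with opposite signs and drop out, while both Laplacian terms turn positive, yielding
\begin{equation*}
\|\nabla P_n u_n^1\|_0^2+\|\nabla Q_n u_n^2\|_0^2=\sigma\bigl(\|u_n^1\|_0^2-\|u_n^2\|_0^2\bigr)-\langle K_1^{\l_n}P_n u_n^1,P_n u_n^1\rangle_0-\langle K_2^{\l_n}Q_n u_n^2,Q_n u_n^2\rangle_0.
\end{equation*}
Since \eqref{H1} and $\|u_n\|_0=1$ make the right-hand side $O(1)$, both $P_n u_n^1$ and $Q_n u_n^2$ are bounded in $H^1$. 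This is the only genuinely new step; once the energy estimate is in hand the rest of the proof is structural.

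From this point I would mirror the proof of Proposition \ref{uniform} componentwise. Passing to subsequences, $P_n u_n^1\xrightarrow{w}u^1$ and $Q_n u_n^2\xrightarrow{w}u^2$ in $H^1$ with strong convergence in $H^0$ by Rellich, and the normalization forces $u:=(u^1,u^2)\neq 0$. Applying the analog of Lemma \ref{h-1} to each projection family, together with the $\langle H^1,H^{-1}\rangle$ pairing argument used around \eqref{eq:weak-h-1}, gives $P_n\triangle P_n u_n^1\xrightarrow{w}\triangle u^1$ and $Q_n\triangle Q_n u_n^2\xrightarrow{w}\triangle u^2$ in $H^{-1}$. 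Three separate invocations of Lemma \ref{uniform-bound} (to $K_1^{\l_n}$ with $P_n$, to $K_2^{\l_n}$ with $Q_n$, and to $K^{\l_n}$ with the mixed pair $(P_n,Q_n)$) deliver the strong $H^0$-convergence of the remaining lower-order terms. Passing to the limit in both component equations yields $M^0u=\sigma u$ in $H^{-1}$; elliptic regularity applied to each component promotes $u$ to $H^2\times H^2$, producing a nontrivial element of $\ker(M^0-\sigma)$ and contradicting $\sigma\in\rho(M^0)$.
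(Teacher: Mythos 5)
Your proposal is correct and follows essentially the same contradiction-and-compactness scheme as the paper: extract normalized eigenvectors, obtain componentwise $H^1$ bounds, pass to weak $H^{-1}$/strong $H^0$ limits via Lemma \ref{h-1}, Lemma \ref{uniform-bound} and Rellich, and bootstrap to contradict $\sigma\in\rho(M^0)$. The only (cosmetic) difference is in the energy estimate: the paper pairs each equation separately with its own component, where the off-diagonal term $\left<K^{\l_n}Q_nw_n,P_nv_n\right>_0$ is already $O(1)$ by \eqref{H1} and the normalization, so your sign-flip cancellation, while valid, is not actually needed.
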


\begin{proof}
We mimic the proof by contradiction presented in Proposition \ref{uniform}, by letting
	$$
	u_n=\left(\begin{array}{c}v_n\\w_n\end{array}\right)
	$$
be a sequence of unit vectors $\|u_n\|_0^2=\|v_n\|_0^2+\|w_n\|_0^2=1$ with the eigenvalue $\sigma$:
	\be\label{contra-arg}
	M_n^{\l_n} u_n=\sigma u_n.
	\en
Consider the first equation
	\be
	P_nA_1^{\l_n}P_nv_n+P_nK^{\l_n}Q_nw_n=\sigma v_n
	\en
and multiply it by $v_n$ (taking $H^0$ inner product) to get
	\be
	\left<A_1^{\l_n}P_nv_n,P_nv_n\right>_0+\left<K^{\l_n}Q_nw_n,P_nv_n\right>_0=\sigma\|v_n\|_0^2,
	\en
which gives us an expression analogous to \eqref{h1-bound}:
	\be
	\|\nabla P_nv_n\|_0^2
	=
	\sigma\|v_n\|_0^2-\left<K^{\l_n}Q_nw_n,P_nv_n\right>_0-\left<K_1^{\l_n}P_nv_n,P_nv_n\right>_0.
	\en
As was the case in \eqref{h1-bound}, the right hand side is uniformly bounded, and therefore we can deduce $H^0$ convergence of $P_nv_n$ to some $v$ (possibly $0$), and weak $H^{-1}$ convergence of $P_n\triangle P_nv_n$ to $\triangle v$.

Repeating this argument for the second equation we get $H^0$ convergence of $Q_nw_n$ to some $w$ (possibly $0$) and weak $H^{-1}$ convergence of $Q_n\triangle Q_nw_n$ to $\triangle w$. Moreover, $u^T=(v,w)\neq(0,0)$ due to the normalization of $u_n$. We now rewrite \eqref{contra-arg}:
	\begin{align*}
	-P_n\triangle P_nv_n+P_nK_1^{\l_n}P_nv_n+P_nK^{\l_n}Q_nw_n
	&=
	\sigma v_n\\
	Q_n\left(K^{\l_n}\right)^*P_nv_n+Q_n\triangle Q_nw_n-Q_nK_2^{\l_n}Q_nw_n
	&=
	\sigma w_n.
	\end{align*}
As $n\to\infty$, both equations converge in the $H^{-1}$ sense to
	\begin{align*}
	-\triangle v+K^0_1v+K^0w
	&=
	\sigma v\\
	\left(K^0\right)^*v+\triangle w-K^0_2w
	&=
	\sigma w.
	\end{align*}
However, all terms without ``$\triangle$'' are elements of $H^0$, and therefore we may bootstrap $v$ and $w$ (now elements of $H^2$ by elliptic regularity), and conclude that these equations hold in the strong sense, i.e., that
	$$
	M^0 u=\sigma u,
	$$
which is a contradiction to the assumption on $\sigma$.
\end{proof}

\begin{cor}\label{cor:limit2}
If there exist sequences $n_k\to\infty$, and $\l_k\to0$ such that $\sigma\in\Sigma(M_{n_k}^{\l_k})$ for all $k$, then $\sigma\in\Sigma(M^0)$.
\end{cor}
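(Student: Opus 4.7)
The plan is to observe that Corollary \ref{cor:limit2} is simply the contrapositive of Proposition \ref{uniform2}, exactly in the way Corollary \ref{cor:limit} is the contrapositive of Proposition \ref{uniform}. All the genuine analytic content (the uniform $H^1$ bound coming from testing the eigenvalue equation against the eigenvector, the extraction of an $H^0$-convergent subsequence of $P_n v_n$ and $Q_n w_n$, the weak $H^{-1}$ convergence of $P_n\triangle P_n v_n$ and $Q_n \triangle Q_n w_n$, and the final elliptic bootstrap) has already been absorbed into the statement of Proposition \ref{uniform2}. So the corollary should require only a short logical rearrangement.

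Concretely, I would argue as follows. Suppose for contradiction that $\sigma \notin \Sigma(M^0)$, i.e., $\sigma \in \rho(M^0)$. Then by Proposition \ref{uniform2}, there exist $N = N(\sigma) > 0$ and $\l^* = \l^*(\sigma) > 0$ such that $\sigma \in \rho(M_n^\l)$ whenever $n > N$ and $0 < \l < \l^*$. Since by hypothesis $n_k \to \infty$ and $\l_k \to 0$, there exists $k_0$ such that $n_k > N$ and $0 < \l_k < \l^*$ for every $k \geq k_0$. For such $k$ one has $\sigma \in \rho(M_{n_k}^{\l_k})$, which directly contradicts the standing assumption $\sigma \in \Sigma(M_{n_k}^{\l_k})$. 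Hence $\sigma \in \Sigma(M^0)$, as claimed.

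There is essentially no obstacle to overcome here beyond unwinding quantifiers; the only point to double-check is that Proposition \ref{uniform2} is formulated uniformly in both parameters (the same $N$ and $\l^*$ work for all $n > N$ and all $0 < \l < \l^*$), which is precisely the content of its statement. Any sequence $(n_k,\l_k)$ of the form in the hypothesis eventually enters this uniform region, so the contradiction is immediate.
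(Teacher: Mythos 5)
Your argument is correct and is exactly what the paper intends: the corollary is the contrapositive of Proposition \ref{uniform2}, whose uniformity in both $n$ and $\l$ makes the quantifier-unwinding immediate. The paper gives no separate proof, so your write-up matches its (implicit) reasoning.
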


Considering the operator $M$ rather than $A$, the statements of Propositions \ref{first-prop} and \ref{second-prop} become:

\begin{prop}\label{first-prop2}
Suppose that $0\in\rho(M^0)$. Let $\l^*=\l^*(0)$ and $N=N(0)$ be as in Proposition \ref{uniform2}. Then for all $0<\l<\l^*$ and for all $n>N$, $neg(M_n^\l)=neg(M_n)$.
\end{prop}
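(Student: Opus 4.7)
The plan is to mimic the proof of Proposition \ref{first-prop} almost verbatim, with Proposition \ref{uniform2} and Corollary \ref{cor:limit2} playing the roles of Proposition \ref{uniform} and Corollary \ref{cor:limit}. I take $\l^* = \l^*(0)$ and $N = N(0)$ as supplied by Proposition \ref{uniform2} with $\sigma = 0$.

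The first step is to observe that, for each fixed $n$, the operator $M_n^\l$ acts nontrivially only on the finite-dimensional subspace $R(P_n) \oplus R(Q_n) \subseteq H^0 \oplus H^0$. On a finite-dimensional range, strong convergence coincides with operator-norm convergence; combined with hypothesis \eqref{H2} applied to each of $K_1^\l, K^\l, K_2^\l$, this yields norm continuity of the four blocks $P_n A_1^\l P_n$, $P_n K^\l Q_n$, $Q_n (K^\l)^* P_n$, $-Q_n A_2^\l Q_n$ as functions of $\l \in [0,1)$. Hence $\l \mapsto M_n^\l$ is continuous in operator norm, and consequently the finite spectrum $\Sigma(M_n^\l)$ varies continuously in $\l$.

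I would then argue by contradiction. If the conclusion failed, there would exist sequences $n_k > N$ with $n_k \to \infty$ and $0 < \l_{n_k} < \l^*$ with $\l_{n_k} \to 0$ such that $neg(M_{n_k}^{\l_{n_k}}) \neq neg(M_{n_k})$. For each $k$, the integer-valued count $neg(M_{n_k}^\l)$ can change only at values of $\l$ for which $0 \in \Sigma(M_{n_k}^\l)$, since the finitely many real eigenvalues of this self-adjoint finite-rank family move continuously along $\R$. Hence there exists some $0 < \l'_{n_k} < \l_{n_k}$ with $0 \in \Sigma(M_{n_k}^{\l'_{n_k}})$. Since $\l'_{n_k} \to 0$ and $n_k \to \infty$, Corollary \ref{cor:limit2} forces $0 \in \Sigma(M^0)$, contradicting the hypothesis $0 \in \rho(M^0)$.

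The principal obstacle, as in the scalar case, is justifying continuous dependence of the spectrum on $\l$; here this reduces to norm continuity of each block on a finite-dimensional range, so no additional compactness or spectral-theoretic input is needed beyond what Proposition \ref{uniform2} has already supplied. The only mild subtlety compared to Proposition \ref{first-prop} is the presence of the off-diagonal blocks $P_n K^\l Q_n$ and its adjoint, but these are handled identically since $K^\l$ satisfies the same hypotheses \eqref{H1}--\eqref{H2}.
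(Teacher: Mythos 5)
Your proposal is correct and follows essentially the same route as the paper: a contradiction argument using the continuity of $\Sigma(M_n^\l)$ in $\l$ for fixed $n$ (justified, as in Proposition \ref{first-prop}, by the finite-dimensionality of the range together with hypothesis \eqref{H2}), locating an intermediate $\l_n'$ with $0\in\Sigma(M_n^{\l_n'})$, and invoking Corollary \ref{cor:limit2} to contradict $0\in\rho(M^0)$.
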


\begin{proof}
The idea is the same as in Proposition \ref{first-prop}: assuming the statement to be false, we obtain a sequence $\l_n\to0$ for which $neg(M_n^{\l_n})\neq neg(M_n)$. However, by the continuous dependence of the spectrum of $M_n^\l$ on $\l$ (for \emph{fixed} $n$), some eigenvalue of $M_n^\l$ must cross $0$ as $\l$ varies from $0$ to $\l_n$, say $\l_n'$. Since $\l_n\to0$, it also holds that $\l_n'\to0$. Therefore $0\in\Sigma(M_n^{\l_n'})$ and by Corollary \ref{cor:limit2}, $0\in\Sigma(M^0)$ which is a contradiction.
\end{proof}

\begin{prop}\label{second-prop2}
Suppose that $0\in\Sigma(M^0)$. There exist $\l^*$ and $N$ such that for all $0<\l<\l^*$ and for all $n>N$, $neg(M_n^\l)\geq neg(M_n)$.
\end{prop}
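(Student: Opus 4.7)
My plan is to mirror the proof of Proposition \ref{second-prop}, with Proposition \ref{uniform2} and Corollary \ref{cor:limit2} playing the roles of the scalar Proposition \ref{uniform} and Corollary \ref{cor:limit}. The case $neg(M_n)=0$ is trivial, so I will assume $neg(M_n)\ge 1$ for all large $n$. Because $M^0$ is self-adjoint with discrete spectrum accumulating only at $\pm\infty$, I can choose $\sigma\in\rho(M^0)$ with $\sigma<0$ and $[\sigma,0)\cap\Sigma(M^0)=\emptyset$. Proposition \ref{uniform2} then supplies $N=N(\sigma)$ and $\l^*=\l^*(\sigma)>0$ with $\sigma\in\rho(M_n^\l)$ whenever $n>N$ and $0<\l<\l^*$.

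I then argue by contradiction: suppose sequences $n\to\infty$ and $\l_n\to 0$ satisfy $neg(M_n^{\l_n})<neg(M_n)$. For each fixed $n$, the operator $M_n^\l$ lives on the finite-dimensional range of $\mathrm{diag}(P_n,Q_n)$ and depends continuously on $\l$, so its spectrum varies continuously and at least one eigenvalue must travel from $(-\infty,0)$ into $[0,\infty)$ as $\l$ runs from $0$ to $\l_n$. If this eigenvalue starts at some $\alpha_n\le\sigma$ at $\l=0$, the intermediate value theorem places $\sigma\in\Sigma(M_n^{\l_n'})$ for some $\l_n'\in(0,\l_n)$; since $\l_n'\to 0$, Corollary \ref{cor:limit2} then forces $\sigma\in\Sigma(M^0)$, contradicting the choice of $\sigma$.

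The step I expect to be the main obstacle is justifying the inequality ``$\alpha_n\le\sigma$''. In the scalar setting this was immediate from the inclusion $\Sigma(A_n)\subseteq\Sigma(A)$, which followed from $P_n$ being a spectral projector of $A$ itself and thus commuting with it. For the block operator, however, the diagonal projector $\mathrm{diag}(P_n,Q_n)$ does \emph{not} commute with $M^0$ in general, since the off-diagonal $K^0$ couples the two components; hence $\Sigma(M_n)$ is not \emph{a priori} contained in $\Sigma(M^0)$. I would address this by establishing the auxiliary claim that $\Sigma(M_n)\cap[\sigma,0)=\emptyset$ for $n$ large: otherwise, a convergent subsequence $\alpha_{n_k}\in[\sigma,0)\cap\Sigma(M_{n_k})$ would, by the weak-limit and elliptic-bootstrap argument of Proposition \ref{uniform2} (which remains valid when the eigenvalue parameter is a bounded convergent sequence rather than a fixed $\sigma$), yield a limit $\alpha\in[\sigma,0]\cap\Sigma(M^0)$. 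The gap condition forces $\alpha=0$, and this residual accumulation-at-$0$ scenario needs to be excluded by a finer perturbative analysis near the finite-dimensional kernel of $M^0$, exploiting that $0$ is isolated in $\Sigma(M^0)$. With the auxiliary claim in hand, every candidate crossing eigenvalue necessarily starts below $\sigma$, and the main contradiction closes as described.
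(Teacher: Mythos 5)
Your argument follows the paper's proof of this proposition step for step: choose $\sigma<0$ in $\rho(M^0)$ with $[\sigma,0)\cap\Sigma(M^0)=\emptyset$, assume $neg(M_n^{\l_n})<neg(M_n)$ along sequences $\l_n\to0$, $n\to\infty$, use continuity of $\Sigma(M_n^\l)$ in $\l$ at fixed $n$ to produce a crossing of $\sigma$, and conclude with Corollary \ref{cor:limit2}. The issue you isolate is real and is exactly the point at which the matrix case differs from the scalar one: the paper's proof says that ``due to our choice of $\sigma$, there must be a crossing of $\sigma$ as well,'' which presupposes that every negative eigenvalue of $M_n$ lies below $\sigma$. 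In Proposition \ref{second-prop} this came for free from $\Sigma(A_n)\subseteq\Sigma(A)$, because $P_n$ is a spectral projection of $A$ itself; for $M_n=\mathrm{diag}(P_n,Q_n)\,M^0\,\mathrm{diag}(P_n,Q_n)$ no such inclusion holds, since the off-diagonal block $K^0$ is not respected by the projections. So your diagnosis is sharper than the paper's own treatment.

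However, your proposed repair does not close the gap, and you say so yourself. The bootstrap/compactness argument only yields $\Sigma(M_n)\cap[\sigma,-\delta]=\emptyset$ for $n>N(\delta)$, and the remaining scenario --- negative eigenvalues of $M_n$ accumulating at $0$ from below --- cannot be excluded via Corollary \ref{cor:limit2}, precisely because $0\in\Sigma(M^0)$ is the standing hypothesis here. This scenario is not vacuous: if $u_0\in\ker M^0$, then $\|M_n\,\mathrm{diag}(P_n,Q_n)u_0\|\to0$ while $\|\mathrm{diag}(P_n,Q_n)u_0\|\to\|u_0\|$, so $M_n$ has eigenvalues $\epsilon_n\to0$ of no definite sign; any $\epsilon_n<0$ contributes to $neg(M_n)$ and could reach $0$ under perturbations $\l$ that shrink with $n$, without ever meeting $\sigma$. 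What the $\sigma$-crossing argument actually delivers is the weaker bound $neg(M_n^\l)\geq\#\bigl(\Sigma(M_n)\cap(-\infty,\sigma)\bigr)=neg(M_n)-\#\bigl(\Sigma(M_n)\cap[\sigma,0)\bigr)$. Deferring the difference to ``a finer perturbative analysis near the kernel of $M^0$'' leaves your proof incomplete as written --- though, to be fair, the paper's own proof is silent on the same point, so the missing step is a genuine one for both arguments rather than something you overlooked relative to the source.
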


\begin{proof}
As in Proposition \ref{second-prop}, let $\sigma\in\rho(M^0)$ satisfy
\begin{enumerate}
\item
$\sigma<0$
\item
$[\sigma,0)\cap\Sigma(M^0)=\emptyset$
\end{enumerate}
Now, as before, assume the assertion to be false. Then there exists a sequence $\l_n\to0$ for which $neg(M_n^{\l_n})< neg(M_n)$. Hence as $\l$ varies from $0$ to $\l_n$, an eigenvalue of $M_n^\l$ must cross $0$ from left to right ($n$ is fixed in this argument). In particular, due to our choice of $\sigma$, there must be a crossing of $\sigma$ from left to right as well, say at $\l_n'$. Then $\sigma\in\Sigma(M_n^{\l_n'})$ with $\l_n'\to0$. By Corollary \ref{cor:limit2}, $\sigma\in\Sigma(M^0)$ which is a contradiction.
\end{proof}

Finally, we note that Proposition \ref{uniform2} still holds with $\sigma\in\rho(M^{\l_0})$ for $\l_0>0$. In this case, the interval $(0,\l^*)$ must be replaced by some interval $(\l^*,\l^{**})$ that contains $\l_0$, but all other parts of the analysis still hold. We can formulate the contra-positive:
\begin{prop}\label{prop:limit}
If there exist sequences $n_k\to\infty$, and $\l_k\to\l_0$ such that $\sigma\in\Sigma(M_{n_k}^{\l_k})$ for all $k$, then $\sigma\in\Sigma(M^{\l_0})$.
\end{prop}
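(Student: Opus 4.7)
The plan is to prove this contrapositively by establishing the extended form of Proposition \ref{uniform2}: if $\sigma\in\rho(M^{\l_0})$, then there exist $N>0$ and an open interval $(\l^*,\l^{**})\ni\l_0$ such that $\sigma\in\rho(M_n^\l)$ for all $n>N$ and all $\l\in(\l^*,\l^{**})$. The paper already remarks that the analysis of Proposition \ref{uniform2} extends to arbitrary $\l_0>0$, so my job is to verify that nothing in that proof actually used $\l_0=0$, and then read off the contrapositive.

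Concretely, I would argue by contradiction: assume the extended statement fails, so there exist $n_k\to\infty$, $\l_k\to\l_0$, and unit vectors $u_{n_k}=(v_{n_k},w_{n_k})^T$ with $M_{n_k}^{\l_k}u_{n_k}=\sigma u_{n_k}$. Dropping the subscript $k$ and taking the $H^0$ inner product of each component equation with $v_n$ and $w_n$ respectively, the uniform bound $\sup_\l\|K^\l\|_{0,0}+\sup_\l\|K_i^\l\|_{0,0}<C$ from \eqref{H1} yields uniform $H^1$ bounds on $P_nv_n$ and $Q_nw_n$, exactly as in \eqref{h1-bound}. Passing to a subsequence, $P_nv_n\to v$ and $Q_nw_n\to w$ strongly in $H^0$ and weakly in $H^1$, and the normalization $\|u_n\|_0=1$ guarantees $(v,w)\neq(0,0)$. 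The weak $H^{-1}$ convergence $P_n\triangle P_nv_n\xrightarrow{w}\triangle v$ follows from the same dual-pairing argument based on Lemma \ref{h-1} that appears in Proposition \ref{uniform}.

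To pass to the limit in the system
\begin{align*}
-P_n\triangle P_nv_n+P_nK_1^{\l_n}P_nv_n+P_nK^{\l_n}Q_nw_n &= \sigma v_n,\\
Q_n\bigl(K^{\l_n}\bigr)^{\!*}P_nv_n+Q_n\triangle Q_nw_n-Q_nK_2^{\l_n}Q_nw_n &= \sigma w_n,
\end{align*}
the decisive input is hypothesis \eqref{H2}, which is stated uniformly for every $\sigma\in[0,1)$: since $\l_n\to\l_0$, we obtain $K^{\l_n}\xrightarrow{s}K^{\l_0}$ and $K_i^{\l_n}\xrightarrow{s}K_i^{\l_0}$. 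Lemma \ref{uniform-bound} then gives $P_nK_1^{\l_n}P_nv_n\to K_1^{\l_0}v$, $P_nK^{\l_n}Q_nw_n\to K^{\l_0}w$, etc., in $H^0$. Passing to the limit in the $H^{-1}$ sense yields
\begin{align*}
-\triangle v+K_1^{\l_0}v+K^{\l_0}w&=\sigma v,\\
\bigl(K^{\l_0}\bigr)^{\!*}v+\triangle w-K_2^{\l_0}w&=\sigma w,
\end{align*}
and since all non-Laplacian terms lie in $H^0$, elliptic regularity places $(v,w)\in H^2\oplus H^2$, giving $M^{\l_0}(v,w)^T=\sigma(v,w)^T$ with $(v,w)\neq(0,0)$. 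This contradicts $\sigma\in\rho(M^{\l_0})$, establishing the extended Proposition \ref{uniform2}; Proposition \ref{prop:limit} is its contrapositive.

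The ``main obstacle'' is essentially just checking that no step of Proposition \ref{uniform2} tacitly used $\l_0=0$. The only substantive point is that the strong-limit hypothesis \eqref{H2} is formulated for an arbitrary limit point $\sigma\in[0,1)$ rather than only at $0$, which is exactly what makes the extension automatic; all uniform-bound, weak-compactness, and bootstrap steps are unaffected by the shift from $\l_0=0$ to general $\l_0>0$.
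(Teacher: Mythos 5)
Your proposal is correct and follows exactly the route the paper takes: the paper simply remarks that the contradiction argument of Proposition \ref{uniform2} goes through verbatim when $0$ is replaced by $\l_0>0$ (with the interval $(0,\l^*)$ replaced by a neighborhood $(\l^*,\l^{**})$ of $\l_0$), and states Proposition \ref{prop:limit} as the contrapositive. You have merely written out in full the verification the paper leaves implicit, correctly identifying hypothesis \eqref{H2} at the limit point as the only input that needed checking.
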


\section{Reformulation of the Problem}\label{setup-section}
In this section we introduce the function spaces that will be used throughout this paper, as well as set up notation, and transform our problem from a first-order non-selfadjoint problem, into a much simpler selfadjoint problem, involving only the spatial variable, and not the momentum. This problem, appearing in \eqref{problem}, will be the focus of the next sections of this work.

\subsection{The Function Spaces}
The function spaces we use throughout this paper are as follows: We denote by $\lpt$ and $H_P^k$ the usual Sobolev spaces of $P$-periodic functions on $\R$, that are square integrable on the interval $[0,P]$, as well as their first $k$ derivatives. In addition, we denote:

	\begin{eqnarray*}
	\lptz&=&\left\{h(x)\in \lpt \;\Bigg{|}\; \int_0^Ph\;dx=0\right\}\\
	H_{P,0}^k&=&\left\{h(x)\in H^k_P\;\Bigg{|}\; \int_0^Ph\;dx=0\right\}\\
	L_\pm^2&=&\left\{m(x,v)\;\Bigg{|}\;  m \text{ is } P\text{-periodic in }x, \left\|m\right\|_\pm^2:=\int_0^P\int_{\R^2}|m|^2w^\pm\;dv\;dx<\infty\right\}\\
	\end{eqnarray*}
where $w^\pm$ are the weights defined in \eqref{weight}. In addition, we use the following notation:
	
	\begin{align*}
	\|\cdot\|_{\lpt}\text{ and }\left<\cdot,\cdot\right>_{\lpt}  \text{ denote the norm and inner product in }L^2_{P}\text{ respectively}\\
	\|\cdot\|_{\pm}\text{ and }\left<\cdot,\cdot\right>_{\pm}  \text{ denote the norm and inner product in }L^2_{\pm}\text{ respectively}\\
	\end{align*}

\subsection{The Basic Equations}\label{equations1}

%
%

In the $1\frac{1}{2}$ dimensional case the RVM system becomes a system of scalar equations:
		\begin{subequations}\label{main-eq}
		\begin{align}
		&\p_tf^\pm+\hat{v}_1\p_xf^\pm\pm(E_1+\hat{v}_2B)\p_{v_1}f^\pm\pm(E_2-\hat{v}_1B)\p_{v_2}f^\pm
		=
		0						\label{vlasov}\\
		&\p_tE_1
		=
		-j_1						\label{ampere1}\\
		&\p_tE_2+\p_xB
		=
		-j_2						\label{ampere2}\\
		&\p_tB
		=
		-\p_xE_2					\label{faraday}\\
		&\p_xE_1
		=
		n_0+\rho						\label{gauss}
		\end{align}
		\end{subequations}
where $\hat{v}=v/\left<v\right>$ and $\left<v\right>=\sqrt{1+|v|^2}$, and
	\begin{subequations}
	\begin{align}
	\rho
	&=
	\int\left(f^+-f^-\right)\;dv	\\
	j_i
	&=
	\int\hat{v}_i\left(f^+-f^-\right)\;dv,\hspace{.25cm}i=1,2
	\end{align}
	\end{subequations}
and the external fields are replaced by the constant external radiation $n_0$. The linearized Vlasov equation is
	\begin{equation}\label{linvlasov}
	\left(\p_t+D^\pm\right)f^\pm
	=
	\mp\mu^\pm_e\hat{v}_1E_1\pm\mu^\pm_p\hat{v}_1B\mp\left(\mu^\pm_e\hat{v}_2+\mu^\pm_p\right)E_2,
	\end{equation}
where
	$$
	D^\pm
	=
	\hat{v}_1\p_x\pm\left(E_1^0+\hat{v}_2B^0\right)\p_{v_1}\mp\hat{v}_1B^0\p_{v_2}.
	$$

Since we are looking for a (purely) growing mode with exponent $\la$ (see \eqref{purely}), we replace everywhere $\p_t$ by $\la$. Thus, our equations for the $P$-periodic electric and magnetic (perturbed) potentials $\phi$ and $\psi$ become
	$$
	B=\p_x\psi
	$$
along with	
	$$
	E_2=-\la\psi,
	$$
which is a result of the integration of \eqref{faraday} and setting the constant of integration to be $0$, and, finally,
	$$
	E_1
	=
	-\p_x\phi-\la b,
	$$
where $b\in\R$ is meant to allow $E_1$ to have a nonzero mean.
%
Replacing $\p_t$ by $\la$, the linearized Vlasov equation becomes
	\begin{equation}\label{vlas}
	\left(\la+D^\pm\right)f^\pm
	=
	\pm\mu^\pm_e\hat{v}_1(\p_x\phi+\la b)\pm\mu^\pm_p\hat{v}_1\p_x\psi\pm\la\left(\mu^\pm_e\hat{v}_2+\mu^\pm_p\right)\psi,
	\end{equation}
along with Maxwell's equations
	\begin{subequations}\label{max-eq}
	\begin{align}
	&-\la\p_x\phi-\la^2b=\la E_1=-j_1\label{ampere1d1}\\
	&-\la^2\psi+\p_x^2\psi=\la E_2+\p_xB=-j_2\label{ampere1d2}\\
	&-\p_x^2\phi=\p_xE_1=\rho.\label{gauss1d}
	\end{align}
	\end{subequations}

We see that in these equations there is only dependence upon derivatives of the electric potential $\phi$, and never dependence upon $\phi$ itself. Therefore, throughout this paper we let $\phi\in\lptz$. Now we introduce the particle paths $\left(X^\pm(s;x,v),V^\pm(s;x,v)\right)$ of the equilibrium state, governed by the transport operators $D^\pm$, where $-\infty<s<\infty$. They satisfy the system of ordinary differential equations
	\begin{subequations}
	\begin{align}
	\dot{X}^\pm&=\hat{V}^\pm_1\\
	\dot{V}^\pm_1&=
	\pm E_1^0(X^\pm)\pm\hat{V}^\pm_2B^0(X^\pm)\\
	\dot{V}^\pm_2&=\mp\hat{V}^\pm_1B^0(X^\pm),
	\end{align}
	\end{subequations}
where $\dot{\square}=\p/\p s$ is the derivative along the characteristic curves, and the initial conditions are
	\begin{equation}
	\left(X^\pm(0,x,v),V^\pm(0,x,v)\right)
	=
	(x,v).
	\end{equation}
When there is no risk of confusion, we abbreviate $X^\pm=X^\pm(s)=X^\pm(s;x,v)$ and $V^\pm=V^\pm(s)=(V_1^\pm(s;x,v),V_2^\pm(s;x,v))$. Now we rewrite the Vlasov equation integrated along the particle paths. Here it is crucial that $e^\pm$ and $p^\pm$, and any function of these variables, are constant along the trajectories. This implies that, $\mu_e^\pm$ and $\mu_p^\pm$ are constants under $s$-differentiation. We multiply \eqref{vlas} by $e^{\la s}$ and notice that the left hand side becomes the perfect derivative $\frac{\p}{\p s}\left(e^{\la s}f\right)$. Integrating the right hand side along the particle paths, one has
	\begin{align*}
	&\pm \int_{-\infty}^0e^{\la s}\left(\mu^\pm_e\hat{V}^\pm_1(\p_x\phi(X^\pm)+\la b)+\mu^\pm_p\hat{V}^\pm_1\p_x\psi(X^\pm)+\la\left(\mu^\pm_e\hat{V}^\pm_2+\mu^\pm_p\right)\psi(X^\pm)\right)ds\\
	=&\pm \int_{-\infty}^0e^{\la s}\mu^\pm_e\left(\hat{V}^\pm_1\p_x\phi(X^\pm)+\la\phi(X^\pm)\right) ds\mp \int_{-\infty}^0\la e^{\la s}\mu^\pm_e\phi(X^\pm)\;ds\\
	&\pm \int_{-\infty}^0e^{\la s}\mu^\pm_p\left(\hat{V}^\pm_1\p_x\psi(X^\pm)+\la\psi(X^\pm)\right)ds\\
	&\pm \int_{-\infty}^0\la e^{\la s}\mu_e^\pm\left(\hat{V}^\pm_1b+\hat{V}^\pm_2\psi(X^\pm)\right)ds=I+II+III+IV.
	\end{align*}
Recalling that $D^+$ and $D^-$ both reduce to $\hat{v}_1\p_x$ when applied to functions of $x$ alone (and not $v$), we see that the integrands in terms $I$ and $III$ are $e^{\la s}\mu_e^\pm\left(D\phi+\la\phi\right)$ and $e^{\la s}\mu_e^\pm\left(D\psi+\la\psi\right)$, respectively, evaluated along the appropriate particle paths. Therefore, both integrands become no more than $\frac{d}{ds}\left(e^{\la s}\mu_e^\pm\phi\right)$ and $\frac{d}{ds}\left(e^{\la s}\mu_e^\pm\psi\right)$. We conclude that the terms $I$ and $III$ become $\pm e^{\la s}\mu_e^\pm\phi(x)$ and $\pm e^{\la s}\mu_p^\pm\psi(x)$, with no boundary terms due to our decay assumptions. The other terms are kept in integral form as above. Since $\mu_e^\pm$ are constant along the trajectories, we may evaluate them at $s=0$, and they have no role in the integration. After dividing both sides by the exponent, we finally have
	\begin{align}\label{fpm}
	f^\pm(x,v)=&\pm\mu^\pm_e\phi(x)\pm\mu^\pm_p\psi(x)\\
	&\mp\mu^\pm_e\int_{-\infty}^0\la e^{\la s}\left[\phi(X^\pm(s))-\hat{V}^\pm_2(s)\psi(X^\pm(s))-b\hat{V}^\pm_1(s)\right]\;ds.\nonumber
	\end{align}

We simplify this expression by introducing the operators $\Ql_\pm: L^2_\pm\to L^2_\pm$, defined by:
	$$\left(\Ql_\pm k\right)(x,v)=\int_{-\infty}^0\la e^{\la s}k\left(X^\pm(s;x,v),V^\pm(s;x,v)\right)\;ds$$
where $k=k(x,v):[0,P]\times\R^2\to\R$.

\begin{rek}\label{ql-domain}
We note that if $h(x)\in\lpt$, then $\tilde{h}(x,v):=h(x), \;(x,v)\in[0,P]\times\R^2$, is clearly in $L^2_\pm$. Therefore, $\Ql_\pm$ act on functions in $\lpt$ as well. 
\end{rek}

With our definition of $\Ql_\pm$, \eqref{fpm} becomes

	\begin{equation}\label{partdist1d}
	\begin{gathered}
	f^+(x,v)
	=
	+\mu^+_e\phi(x)+\mu^+_p\psi(x)-\mu^+_e\Ql_+\phi+\mu^+_e\Ql_+(\hat{v}_2\psi)+b\mu^+_e\Ql_+\hat{v}_1,\\
	f^-(x,v)
	=
	-\mu^-_e\phi(x)-\mu^-_p\psi(x)+\mu^-_e\Ql_-\phi-\mu^-_e\Ql_-(\hat{v}_2\psi)-b\mu^-_e\Ql_-\hat{v}_1.
	\end{gathered}
	\end{equation}

%


\begin{lemma}[Properties of $D^\pm$]
$D^\pm$ are skew-adjoint operators on $\lwt$. Their null spaces $\ker{D^\pm}$ consist of all functions $g=g(x,v)$ in $\lwt$ that are constant on each connected component in $\R\times\R^2$ of $\{e^\pm=const\text{ and }p^\pm=const\}$. In particular, $\ker{D^\pm}$ contain all functions of $e^\pm$ and of $p^\pm$.
\end{lemma}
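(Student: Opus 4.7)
The plan is to prove the two assertions independently. Skew-adjointness will follow from integration by parts, after verifying two structural facts: the first-order transport operator $D^\pm$ is associated to a \emph{divergence-free} vector field on $\R\times\R^2$, and the weight $w^\pm$ lies in the formal kernel of $D^\pm$. The characterization of $\ker D^\pm$ will then follow from interpreting $D^\pm$ as the derivative along the equilibrium flow $s \mapsto (X^\pm(s;x,v), V^\pm(s;x,v))$, together with the fact that each orbit is a connected component of a joint level set of the conserved quantities $e^\pm$ and $p^\pm$.

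For skew-adjointness, I would first compute the divergence of the vector field
\[
\mathbf{X}^\pm(x,v)=\bigl(\hat{v}_1,\; \pm(E_1^0(x)+\hat{v}_2 B^0(x)),\; \mp \hat{v}_1 B^0(x)\bigr).
\]
Only the $B^0$-dependent terms can contribute to $\mathrm{div}\,\mathbf{X}^\pm$ in $(v_1,v_2)$, and they yield $\pm B^0\bigl(\partial_{v_1}\hat{v}_2 - \partial_{v_2}\hat{v}_1\bigr)$, which vanishes since both partial derivatives equal $-v_1v_2/\langle v\rangle^3$. Next, because $e^\pm$ and $p^\pm$ are conserved along characteristics, any smooth function of these invariants is annihilated by $D^\pm$; in particular $D^\pm w^\pm = 0$. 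For $f,g$ in a dense core of smooth functions that are $P$-periodic in $x$ and rapidly decaying in $v$, the Leibniz rule together with divergence-freeness and the vanishing of the boundary contributions gives
\[
\langle D^\pm f, g\rangle_\pm + \langle f, D^\pm g\rangle_\pm
\;=\; \int_0^P\!\!\int_{\R^2} D^\pm(fg)\, w^\pm\, dv\, dx
\;=\; -\int_0^P\!\!\int_{\R^2} fg\, D^\pm w^\pm\, dv\, dx
\;=\; 0.
\]
Skew-adjointness on the maximal domain in $\lwt$ then follows from a standard closure argument.

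For the kernel, observe that $D^\pm g(x,v) = \tfrac{d}{ds}\big|_{s=0} g(X^\pm(s;x,v), V^\pm(s;x,v))$, so $D^\pm g = 0$ is equivalent to $g$ being constant along every characteristic curve. Each such curve lies inside a joint level set $\{e^\pm = c_1,\, p^\pm = c_2\}\subseteq \R\times\R^2$. A dimension count (two independent conserved quantities in a three-dimensional phase space) shows that these level sets are generically one-dimensional, so that, away from isolated critical points of the map $(x,v)\mapsto(e^\pm,p^\pm)$, each connected component coincides with a single orbit of the flow. Hence $\ker D^\pm$ is precisely the set of $\lwt$-functions which are constant on each such connected component. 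In particular, any function of the form $F(e^\pm)$ or $G(p^\pm)$ (more generally $H(e^\pm,p^\pm)$) is constant on entire joint level sets and therefore belongs to $\ker D^\pm$.

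The main technical obstacle will be making the integration by parts rigorous in the weighted space $\lwt$: for a general $f\in\lwt$, the transport $D^\pm f$ need not decay at infinity in $v$, so I would work initially on a dense subset of smooth functions with compact $v$-support (and $P$-periodic in $x$), establish the identity there, and then extend to the closure using the boundedness and decay of $w^\pm$.
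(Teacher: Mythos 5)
Your proposal is correct and follows essentially the same route as the paper's (much terser) proof: skew-adjointness via integration by parts using that the transport field is divergence-free and that $D^\pm w^\pm=0$ because $w^\pm=w^\pm(e^\pm)$, and membership of functions of $e^\pm,p^\pm$ in the kernel via $D^\pm e^\pm=D^\pm p^\pm=0$. If anything you supply more detail than the paper does — the explicit divergence computation, the density/closure argument, and the orbit-versus-level-set discussion for the full kernel characterization are all left implicit there.
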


\begin{proof}
We show for the `$+$' case, and drop the $+$ superscripts. It is straightforward to verify that $De=Dp=0$. Therefore $\ker{D}$ contains all functions of $e$ and of $p$. Skew-adjointness is easily seen due to integration by parts, as $D$ is a first-order differential operator. Derivatives that ``hit" $w$ vanish, since $w=w(e)$ is a function of $e$.
\end{proof}

\begin{defi}\label{proj-define}
We define $\Proj^\pm$ to be the orthogonal projection operators of $L_\pm^2$ onto $\ker D^\pm$.
\end{defi}

\begin{lemma}\label{proj-parity}
The projection operators $\Proj^\pm$ preserve parity with respect to the variable $v_1$.
\end{lemma}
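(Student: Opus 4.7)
The plan is to show that $\Proj^\pm$ commutes with the parity-reversing map $\mathcal{I}:f(x,v_1,v_2)\mapsto f(x,-v_1,v_2)$; once this is done, the claim follows immediately, since $f$ even (resp. odd) in $v_1$ means $\mathcal{I}f=\pm f$, and commutation gives $\mathcal{I}\Proj^\pm f=\pm\Proj^\pm f$.

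First I would check that $\mathcal{I}$ is a self-adjoint unitary on $L^2_\pm$. The weight $w^\pm$ is a function of $e^\pm=\langle v\rangle\pm\phi^0(x)$, and both $\langle v\rangle=\sqrt{1+v_1^2+v_2^2}$ and $\phi^0(x)$ are invariant under $v_1\mapsto-v_1$, so $w^\pm\circ\mathcal{I}=w^\pm$. The change of variable $v_1\mapsto-v_1$ in $\|\mathcal{I}f\|_\pm^2$ therefore leaves the integral unchanged, giving $\|\mathcal{I}f\|_\pm=\|f\|_\pm$. Clearly $\mathcal{I}^2=\mathrm{id}$, so $\mathcal{I}^*=\mathcal{I}^{-1}=\mathcal{I}$.

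Next, the key algebraic step is to verify that $\mathcal{I}$ and $D^\pm$ anti-commute. Recall
$$D^\pm=\hat{v}_1\p_x\pm(E_1^0+\hat{v}_2B^0)\p_{v_1}\mp\hat{v}_1B^0\p_{v_2}.$$
Under $v_1\mapsto-v_1$, the functions $\hat{v}_1$ and $\p_{v_1}$ each pick up a minus sign, while $\hat{v}_2$, $\p_{v_2}$, $\p_x$, $E_1^0(x)$ and $B^0(x)$ are unaffected. Expanding $(D^\pm\mathcal{I}f)(x,v_1,v_2)$ and $(\mathcal{I}D^\pm f)(x,v_1,v_2)=(D^\pm f)(x,-v_1,v_2)$ and comparing term by term shows that exactly two of the three summands change sign, from which one reads off
$$D^\pm\mathcal{I}=-\mathcal{I}D^\pm.$$
This is the only step that requires any computation, and it is the main (though mild) obstacle — one needs to track the signs of $\hat{v}_1$ and of the derivative $\p_{v_1}$ simultaneously.

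Finally, the anti-commutation implies that $\mathcal{I}$ maps $\ker D^\pm$ into itself: if $D^\pm f=0$ then $D^\pm(\mathcal{I}f)=-\mathcal{I}(D^\pm f)=0$. Because $\mathcal{I}$ is a self-adjoint unitary that preserves the closed subspace $\ker D^\pm$, it also preserves its orthogonal complement in $L^2_\pm$, and hence commutes with the orthogonal projection $\Proj^\pm$ onto $\ker D^\pm$. This gives $\Proj^\pm\mathcal{I}=\mathcal{I}\Proj^\pm$, which is exactly the statement that $\Proj^\pm$ preserves parity in $v_1$. \qed
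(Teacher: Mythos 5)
Your proof is correct and rests on the same key identity as the paper's: the reflection $Rf(x,v_1,v_2)=f(x,-v_1,v_2)$ anti-commutes with $D^\pm$, hence maps $\ker D^\pm$ into itself. Where you diverge is only in the final step. The paper deduces parity preservation directly from the defining relation $\iint(f-\Proj^\pm f)\,g\,w^\pm\,dx\,dv=0$ for $g\in\ker D^\pm$ of definite parity, via the change of variables $v_1\mapsto-v_1$; you instead note that $R$ (your $\mathcal{I}$) is a unitary involution on $L^2_\pm$ preserving the closed subspace $\ker D^\pm$, hence preserving its orthogonal complement, hence commuting with the orthogonal projection. The two are equivalent --- the paper's change-of-variables computation is precisely the concrete verification of the commutation you invoke abstractly --- but your version has the merit of making explicit the fact that the weight $w^\pm=w^\pm(e^\pm)$ is even in $v_1$, which is genuinely needed for $R$ to be unitary on the weighted space and which the paper uses only implicitly. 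One small slip in your write-up: in the anti-commutation check, each of the three summands of $D^\pm$ picks up exactly one sign flip (from $\hat v_1$ in the first and third terms, from $\p_{v_1}$ in the second), so \emph{all three} summands change sign; if only ``exactly two of the three'' changed sign, as you write, you would not obtain the clean relation $D^\pm\mathcal{I}=-\mathcal{I}D^\pm$. The identity you state is nonetheless the correct one, and the rest of the argument goes through.
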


\begin{proof}
Let us demonstrate for $\Proj^+$ and drop the $+$ superscript to simplify notation. The demonstration for $\Proj^-$ is identical. Recall that
	$$
	D=D^+=\hat{v}_1\p_x+
	(E_1^0+\hat{v}_2B^0)\p_{v_1}-\hat{v}_1B^0\p_{v_2}.
	$$
Now, let $f=f(x,v_1,v_2)$ and let $R$ be the operator that reverses $v_1$: $Rh(x,v_1,v_2)=h(x,-v_1,v_2)$. Then
	$$
	D(Rf)
	=
	-R(\hat{v}_1\p_xf)-R(
	(E_1^0+\hat{v}_2B^0)\p_{v_1}f)+R(\hat{v}_1B^0\p_{v_2}f)
	=
	-R(Df)
	$$
Therefore $f\in\ker D$ if and only if $Rf\in\ker D$. This implies that one can find a basis of even and odd functions (in the variable $v_1$) to the space $\ker D$. To show that if $f$ is even (odd) in $v_1$ then $\Proj f$ is also even (odd) in $v_1$, we let $g\in \ker D$ be, without loss of generality, even or odd. Then it must hold that $\iint(f-\Proj f)g\;w\;dx\;dv=0$. In the case that $f$ is even, we change variables $v_1\to-v_1$ to get $\pm\iint(f-R(\Proj f))g\;w\;dx\;dv=0$ and, therefore, $R(\Proj f)=\Proj f$. Here the $\pm$ depends on the parity of $g$. The odd case is treated in the same way.
\end{proof}

We now state the important properties of the operators $\Ql_\pm$. These properties are discussed in \cite[Lemma 2.5]{benartzi1}, with a slightly different weight: in \cite{benartzi1} we had $e^+=e^-=\left<v\right>$ due to the symmetry assumptions, and therefore $w^+=w^-$ were simply denoted by $w$. This difference, though subtle, does not impact the proof in a significant way, and we therefore do not include the proof here.
	\begin{lemma}[Properties of $\Ql_\pm$]\label{propql}
	Let $0<\la<\infty$.
		\begin{enumerate}
		\item\label{qlnorm}

		$\Ql_\pm$ map $\lwt\to\lwt$ with operator norm = 1. Moreover, recalling Remark \ref{ql-domain}, $\Ql_\pm$ are also bounded as operators $\lpt\to\lwt$.

		\item\label{laz}
	For all $m(x,v)\in\lwt$, $\left\|\Ql_\pm m-\Proj^\pm m\right\|_\pm\to0$ as $\la\to0$.

		\item\label{lanotz}
		If $\sigma>0$, then $\left\|\Ql_\pm-\Qs_\pm\right\|=O(|\la-\sigma|)$ as $\la\to\sigma$, where $\|\cdot\|$ is the operator norm from $\lwt$ to $\lwt$.

		\item\label{lainfty}
		For all $m(x,v)\in\lwt$, $\left\|\Ql_\pm m-m\right\|_\pm\to0$ as $\la\to\infty$.
		
		\end{enumerate}
	\end{lemma}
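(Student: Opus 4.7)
The plan is to identify $\Ql_\pm$ with the scaled resolvent $\lambda(\lambda + D^\pm)^{-1}$ of the skew-adjoint transport operator $D^\pm$, and then read off all four items from functional calculus. To verify the identification, set
\[
g(x,v) := \int_{-\infty}^0 e^{\lambda s}\, k\!\left(X^\pm(s;x,v), V^\pm(s;x,v)\right)\, ds,
\]
and use the group property of the characteristic flow together with $(d/ds)\, k(X^\pm(s), V^\pm(s)) = (D^\pm k)(X^\pm(s), V^\pm(s))$ to obtain $(\lambda + D^\pm)g = k$; hence $\Ql_\pm k = \lambda(\lambda + D^\pm)^{-1} k$. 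The analytic prerequisite --- that the characteristic flow defines a strongly continuous unitary group on $\lwt$ --- follows from Liouville's theorem applied to the underlying Hamiltonian flow, combined with the fact that $w^\pm(e^\pm)$ is constant along $\pm$-trajectories since $e^\pm$ is an invariant.

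For item (1), because $\sigma(D^\pm) \subseteq i\R$, functional calculus gives $\|\Ql_\pm\|_{\lwt \to \lwt} = \sup_{\tau \in \R}|\lambda/(\lambda + i\tau)| = 1$, the supremum being attained at $\tau = 0$ since $\ker D^\pm \neq \{0\}$ (it contains the constants, which belong to $\lwt$ because $\alpha > 2$ makes $w^\pm$ integrable in $v$); indeed $\Ql_\pm$ acts as the identity on $\ker D^\pm$. For the second part of (1), the embedding $h \mapsto \tilde h(x,v) := h(x)$ of $\lpt$ into $\lwt$ has norm $\bigl(\int_{\R^2} w^\pm\, dv\bigr)^{1/2} < \infty$, so $\Ql_\pm : \lpt \to \lwt$ is bounded by composition. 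Item (4) follows from $\lambda/(\lambda + i\tau) \to 1$ pointwise and uniformly bounded as $\lambda \to \infty$, yielding strong convergence $\Ql_\pm \to I$ by dominated convergence against the spectral measure of $D^\pm$.

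Item (2) is the Abel-mean ergodic theorem for the one-parameter unitary group $U_s^\pm$ generated by $D^\pm$: writing $\Ql_\pm = \int_{-\infty}^0 \lambda e^{\lambda s} U_s^\pm\, ds$ and observing that the symbol $\lambda/(\lambda + i\tau) \to \mathbf{1}_{\{0\}}(\tau)$ pointwise boundedly as $\lambda \to 0^+$, dominated convergence for the spectral measure delivers strong convergence to the spectral projection onto $\ker D^\pm$, which is exactly $\Proj^\pm$.

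For item (3), the resolvent identity yields
\[
\Ql_\pm - \Qs_\pm = (\lambda - \sigma)\, D^\pm\, (\lambda + D^\pm)^{-1}(\sigma + D^\pm)^{-1},
\]
and functional calculus bounds the operator norm by
\[
\left\|\Ql_\pm - \Qs_\pm\right\| \leq |\lambda - \sigma|\sup_{\tau \in \R}\frac{|\tau|}{\sqrt{(\lambda^2+\tau^2)(\sigma^2+\tau^2)}} \leq \frac{|\lambda - \sigma|}{\sigma},
\]
the last step using $\lambda^2 + \tau^2 \geq \tau^2$ and $\sigma^2 + \tau^2 \geq \sigma^2$. The main obstacle in executing this plan is not any single estimate, but rather the verification that the characteristic flow preserves the species-dependent weighted measure $w^\pm(e^\pm)\, dv\, dx$ on each $\lwt$ --- this is the genuinely new element relative to \cite{benartzi1}, where the symmetry assumptions forced $w^+ = w^-$. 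Because each $w^\pm$ depends only on the $\pm$-conserved quantity $e^\pm$, the argument decouples between species and, once the weighted Liouville step is established, reduces to standard functional calculus for the skew-adjoint $D^\pm$.
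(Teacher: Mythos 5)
Your proposal is correct, but I cannot compare it line-by-line with an in-paper argument: the paper deliberately omits the proof of this lemma, citing \cite[Lemma 2.5]{benartzi1} and remarking only that the passage from a single weight $w$ to the two species-dependent weights $w^\pm$ ``does not impact the proof in a significant way.'' The referenced proofs (following Lin--Strauss) work directly with the integral representation: the norm bound from $\int_{-\infty}^0\la e^{\la s}\,ds=1$ together with unitarity of the characteristic flow on $\lwt$; item \ref{laz} via the mean ergodic theorem in the form $\lwt=\ker D^\pm\oplus\overline{R(D^\pm)}$, checking that $\Ql_\pm$ fixes $\ker D^\pm$ and annihilates $R(D^\pm)$ in the limit; and item \ref{lanotz} by estimating $\int_{-\infty}^0|\la e^{\la s}-\sigma e^{\sigma s}|\,ds=O(|\la-\sigma|)$ via Minkowski's integral inequality. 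Your route --- identifying $\Ql_\pm=\la(\la+D^\pm)^{-1}$ as the Laplace transform of the unitary group and reading off all four items from the functional calculus of the skew-adjoint generator, with the symbol $\la/(\la+i\tau)$ converging boundedly to $1$, to $\mathbf{1}_{\{0\}}(\tau)$, or differing by $i\tau(\la-\sigma)/((\la+i\tau)(\sigma+i\tau))$ as appropriate --- is an equivalent and arguably tidier packaging of the same mechanism; the resolvent identity in item \ref{lanotz} reproduces exactly the $|\la-\sigma|/\sigma$ bound. You also correctly isolate the one genuinely new verification, namely that each flow preserves its own weighted measure $w^\pm(e^\pm)\,dv\,dx$ (Liouville plus invariance of $e^\pm$), which is precisely the ``subtle difference'' the paper alludes to. Two small points to tighten: the operator norm of the embedding $h\mapsto\tilde h$ is $\sup_x\bigl(\int_{\R^2}w^\pm\,dv\bigr)^{1/2}$, the supremum over $x$ being needed because $e^\pm$ depends on $x$ through $\phi^0$ (it is finite since $\phi^0$ is bounded); and your appeal to the spectral theorem presupposes that $D^\pm$ with the flow-invariant domain is genuinely skew-\emph{adjoint}, not merely skew-symmetric --- this is cleanest via Stone's theorem applied to the unitary group you construct, which closes the loop rather than relying on the integration-by-parts remark alone.
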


\subsection{The Operators}
In addition to the definitions of $\Aonez, \Atwoz$ and $\Bz$ in \S\ref{mainres}, we define the following operators depending on a real parameter $0<\lambda<\infty$, acting on $\lptz,\lpt,\lpt$, with domains $\hptz,\hpt,\lpt$, respectively:
	\begin{eqnarray*}
	\Aonel h&=&-\p_x^2h-\left(\sum_\pm\int\mu^\pm_e\;dv\right)h+\sum_\pm\int\mu^\pm_e\Ql_\pm h\;dv,\\
	\Atwol h&=&-\p_x^2h+\la^2h-\left(\sum_\pm\int\hat{v}_2\mu^\pm_p\;dv\right)h-\sum_\pm\int\mu^\pm_e\hat{v}_2\Ql_\pm (\hat{v}_2h)\;dv,\\
	\Bl h&=&\left(\sum_\pm\int\mu^\pm_p\;dv\right)h+\sum_\pm\int\mu^\pm_e\Ql_\pm(\hat{v}_2h)\;dv.
	\end{eqnarray*}
As we have seen in Lemma \ref{propql}, $\Ql_{\pm}\to\Proj^\pm$ strongly as $\la\to0$; this hints at the relationship between the operators defined here (with $\l$ superscript) and the operators defined in \S\ref{mainres} (with a $0$ superscript). The rigorous results may be found in \S\ref{the-operators}. We also define the following multiplication operators with domain $\R$ and range $\lpt$, depending on the parameter $0<\la<\infty$:
	\begin{eqnarray*}
	\Cl(b)&=&b\sum_\pm\int\mu^\pm_e\Ql_\pm\left(\hat{v}_1\right)dv,\\
	\Dl(b)&=&b\sum_\pm\int\hat{v}_2\mu^\pm_e\Ql_\pm\left(\hat{v}_1\right)dv,\\
	\end{eqnarray*}
and a constant depending on $\la$:
	$$
	l^\la=\frac{1}{P}\sum_\pm\int_0^P\int\hat{v}_1\mu^\pm_e\Ql_\pm\left(\hat{v}_1\right)dv\;dx,
	$$
which is closely related to the number $l^0$ defined in \S\ref{mainres}. Now let us write the formulas for the adjoint operators of $\Bl$, $\Cl$ and $\Dl$. The derivation of these expressions appears in \cite{benartzi1}:
%
%
	\begin{align*}
	\left(\Bl\right)^*k&=\left(\sum_\pm\int\mu^\pm_p\;dv\right)k+\sum_\pm\int\mu^\pm_e\hat{v}_2\Ql_\pm k\;dv,\\
%
	\left(\Cl\right)^*k&=\sum_\pm\int_0^P\int\mu^\pm_e\Ql_\pm\left(\hat{v}_1\right)k(x)\;dv\;dx,\\
	\left(\Dl\right)^*k&=\sum_\pm\int_0^P\int\hat{v}_2\mu^\pm_e\Ql_\pm\left(\hat{v}_1\right)k(x)\;dv\;dx.
	\end{align*}

\subsection{The Matrix Operator $\Ml$}\label{ml}
As in \cite{benartzi1}, after plugging in the expressions for $f^+$ and $f^-$ into Maxwell's Equations, one finds three equations for the three potentials $\phi,\psi$ and $b$, all depending upon the parameter $\l$. We write the three equations in matrix form as
	\be\label{problem}
	\Ml u=0,
	\en
where the matrix operator $\Ml:\lpt\times\lpt\times\R\to \lpt \times \lpt \times \R$ is defined as
	\be\label{main-mat-op}
	\Ml
	=
	\matthree{-\Aonel}{\Bl}{\Cl}{\left(\Bl\right)^*}{\Atwol}{-\Dl}{\left(\Cl\right)^*}{-\left(\Dl\right)^*}{-P\left(\la^2-l^\la\right)}.
	\en
with domain $\hpt\times\hpt\times\R$.
Formally, to prove our main theorem, it suffices to show that $\Ml$ has a nontrivial kernel for some $0<\la<\infty$.
Finally, we define
	\be\label{mz}
	\Mz
	=
	\matthree{-\Aonez}{\Bz}{0}{\left(\Bz\right)^*}{\Atwoz}{0}{0}{0}{Pl^0}.
	\en

\begin{rek}\label{remark}
As mentioned before, since $\phi$ only matters up to a constant, we restrict the domain of $\Ml$ and $\Mz$ to $\hptz\times\hpt\times\R$. Indeed, making this restriction is important. Letting $(\phi,\psi,b)=u_{triv}^T=(1,0,0)$ we notice that $\Ml u_{triv}=0$ for any $\la\geq0$. However, $u_{triv}$ is a trivial solution that is of no interest for us, since it would generate a trivial solution $(f,E,B)=(0,0,0)$. Moreover, multiples of $u_{triv}$ are the only trivial solutions. Indeed, whenever either $\psi$ or $b$ are nonzero, the linearized equations become nontrivial.
\end{rek}

\section{Analysis for Small and Large Values of $\l$}\label{large-small}
Our goal is to find for $\Ml$ a nontrivial kernel for some $0<\l<\infty$. Our method is to ``compare'' its spectrum when $\l=0$ and when $\l\to\infty$ and detect some kind of discrepancy that would indicate that an eigenvalue crossed through $0$. To make this intuitive process rigorous, we shall first consider finite-dimensional approximations of the operators $\Ml$, study them, and then retrieve the original operators $\Ml$.

First, let us make a general comment on `diagonalization' of real block matrices. Consider a symmetric block matrix
	\be
	M=\matthree{A_1}{B}{C}
	{B^*}{A_2}{D}
	{C^*}{D^*}{A_3}
	\en
mapping $\R^n\times\R^n\times\R^n\to\R^n\times\R^n\times\R^n$ and the matrix
	\be
	J=\matthree{I_n}{0}{0}{J_1}{I_n}{0}{J_2}{J_3}{I_n}
	\en
with $J_i,\;i=1,2,3$, to be determined later. Our goal is to `diagonalize' $M$ by conjugating with $J$, to get
	\be
	\Delta=J^*MJ=\matthree{\Delta_1}{0}{0}
	{0}{\Delta_2}{0}
	{0}{0}{\Delta_3}.
	\en

While $M$ and $\Delta$ do not share the same eigenvalues and eigenvectors, they both do have negative and positive eigenspaces (that is, eigenspaces associated to negative, resp. positive, eigenvalues) of the same dimension:
	\be
	neg(M)=neg(\Delta)\hspace{1cm} pos(M)=pos(\Delta).
	\en
Indeed, suppose that $u$ is in the negative eigenspace of $J^*MJ$: $u^TJ^*MJu<0$. Then $(Ju)^TMJu<0$ hence $v:=Ju$ is in the negative eigenspace of $M$. Since $J$ has a trivial kernel, this implies that $neg(M)\geq neg(J^*MJ)$. Conversely, suppose that $v^TMv<0$. Since $J$ has a trivial kernel, there exists $u$ such that $v=Ju$, and therefore $0>v^TMv=(Ju)^TMJu=u^TJ^*MJu$ which implies that $u$ is in the negative eigenspace of $J^*MJ$.

When carrying out the calculations involved in the conjugation $J^*MJ$, and requiring that the off-diagonal terms be $0$, one finds:
	\begin{align*}
	J_1
	&=
	\left(A_2-DA_3^{-1}D^*\right)^{-1}\left(-B^*+DA_3^{-1}C^*\right)\\
	J_2
	&=
	-A_3^{-1}D^*J_1^*-A_3^{-1}C^*\\
	J_3
	&=
	-A_3^{-1}D^*,
	\end{align*}
and
	\begin{align*}
	\Delta_1
	&=
	A_1-\left(B-CA_3^{-1}D^*\right)\left(A_2-DA_3^{-1}D^*\right)^{-1}\left(B^*-DA_3^{-1}C^*\right)-CA_3^{-1}C^*\\
	\Delta_2
	&=
	A_2-DA_3^{-1}D^*\\
	\Delta_3
	&=
	A_3.
	\end{align*}

Returning to our matrix operator $\Ml$, we define two projection operators:
\begin{defi}
\begin{enumerate}
\item
Let $P_n$ be the orthogonal projection onto the eigenspace associated with the first $n$ eigenvalues (counting multiplicity) of $\Aonez$.
\item
Let $Q_n$ be the orthogonal projection onto the eigenspace associated with the first $n$ eigenvalues (counting multiplicity) of $\Atwoz$.
\end{enumerate}
\end{defi}
Then we define the \emph{truncated matrix operator} to be
	\be
	\Mnl
	=
	\matthree{-\Aoneln}{\Bln}{\Cln}{\left(\Bln\right)^*}{\Atwoln}{-\Dln}{\left(\Cln\right)^*}{-\left(\Dln\right)^*}{-P\left(\l^2-l^\l\right)}
	\en
where
	\begin{align*}
	\Aoneln=P_n\Aonel P_n  \hspace{1cm}  \Bln=P_n\Bl Q_n  \hspace{1cm}  \Cln=P_n\Cl\\
	\Atwoln=Q_n\Atwol Q_n  \hspace{3.9cm}  \Dln=Q_n\Cl.
	\end{align*}
When $\l=0$ the truncated matrix operator becomes
	\be
	\Mnz
	=
	\matthree{-\Aonezn}{\Bzn}{0}{\left(\Bzn\right)^*}{\Atwozn}{0}{0}{0}{Pl^0}.
	\en

We wish to apply the diagonalization technique to $\Mnz$, yet we do not wish to invert the operator $\Atwoz$. We therefore apply our technique to the operator that is the result of switching the first two rows and columns of $\Mnz$. After diagonalizing, we obtain the matrix operator
	\be
	\Fnz=\matthree{\Konezn}{0}{0}{0}{-\Aonezn}{0}{0}{0}{Pl^0}
	\en
where $\Konezn=\Atwozn+\left(\Bzn\right)^*\left(\Aonezn\right)^{-1}\Bzn$. Now we see the benefit of this diagonalization process. Instead of considering $\Mnz$ we consider $\Fnz$ which has the same number of negative eigenvalues, but for which the actual count is simpler:
	\begin{align}\label{diag-l0}
	neg\left(\Mnz\right)
	&=neg\left(\Atwozn+\left(\Bzn\right)^*\left(\Aonezn\right)^{-1}\Bzn\right)+neg\left(-\Aonezn\right)+neg\left(l^0\right)\\
	&=
	neg\left(\Atwozn+\left(\Bzn\right)^*\left(\Aonezn\right)^{-1}\Bzn\right)+n-z\left(\Aonezn\right)-neg\left(\Aonezn\right)+neg\left(l^0\right).\nonumber
	\end{align}
Lemma \ref{well-defined} makes this expression rigorous (that is, in Lemma \ref{well-defined} we show that one can invert the operator $\Aonez$ when composed with $\Bz$).

In the next section we shall see the relationship between $\Mnz$ and $\Mnl$ when $\l$ is small (see \eqref{third-eq}). This analysis will prove to be very simple, due to the results of \S\ref{func-anal-back}. We now turn to the analysis of $\Mnl$ when $\l$ is large. This analysis also turns out to be simple, mostly due to the appearance of the term $\l^2$ in two crucial locations.
%
%
%
%
\begin{lemma}\label{large-la}
There exists $\Lambda^*>0$ such that for any $n\in\N$ and any $\l>\Lambda^*$, $\Mnl$ has exactly $n+1$ negative eigenvalues.
\end{lemma}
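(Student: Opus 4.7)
The plan is to perform a block Schur-complement reduction of $\Mnl$, exploiting that for large $\l$ the $(2,2)$ block $\Atwoln$ and the scalar $(3,3)$ entry $-P(\l^2-l^\l)$ carry dominant contributions of order $\l^2$ of opposite signs, while $-\Aoneln$, the number $l^\l$, and the off-diagonals $\Bln,\Cln,\Dln$ all remain bounded uniformly in both $\l$ and $n$. These uniform bounds follow from the weight hypothesis $|\mu^\pm_{e}|+|\mu^\pm_p|\le w^\pm$ together with $\|\Ql_\pm\|_\pm=1$ from Lemma \ref{propql}(\ref{qlnorm}); in particular $\Atwol=-\p_x^2+\l^2+R^\l$ with $\sup_\l\|R^\l\|\le C$, so that $\Atwoln\ge(\l^2-C)Q_n$ on $R(Q_n)$ and $-P(\l^2-l^\l)\le-P\l^2/2$ once $\l$ is sufficiently large.

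First I would analyze the lower-right $2\times 2$ block
\[
S_n^\l=\mattwo{\Atwoln}{-\Dln}{-(\Dln)^*}{-P(\l^2-l^\l)},
\]
showing that for $\l\ge\Lambda^*$ it is invertible with $\|(S_n^\l)^{-1}\|=O(\l^{-2})$ and has exactly $n$ positive and $1$ negative eigenvalue, since the bounded off-diagonal $-\Dln$ cannot flip the signature once $\l^2$ dwarfs $\|\Dln\|^2$. Using the block-diagonalization formulas derived earlier in this section (equivalently, Schur complement with respect to $S_n^\l$) one obtains
\[
\Mnl/S_n^\l=-\Aoneln-\bigl(\Bln\;\;\Cln\bigr)(S_n^\l)^{-1}\begin{pmatrix}(\Bln)^*\\(\Cln)^*\end{pmatrix}=-\Aoneln+O(\l^{-2}),
\]
with the correction bounded in operator norm on $R(P_n)$ by $C/\l^2$ uniformly in $n$. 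By Haynsworth inertia additivity,
\[
neg(\Mnl)=neg(S_n^\l)+neg(\Mnl/S_n^\l)=1+neg(\Mnl/S_n^\l),
\]
reducing the lemma to showing that $\Aoneln$ is positive definite on $R(P_n)$ for all $\l\ge\Lambda^*$ and all $n$.

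This final positivity is the hard part. Writing $\Aonel=-\p_x^2+K^\l$ with $K^\l h=\sum_\pm\int\mu^\pm_e(\Ql_\pm h-h)\,dv$, Lemma \ref{propql}(\ref{lainfty}) and the weighted Cauchy--Schwarz bound $\|K^\l h\|_{\lpt}^2\le C\sum_\pm\|\Ql_\pm h-h\|_\pm^2$ (using $|\mu^\pm_e|^2/w^\pm\le w^\pm$) together give $K^\l\to 0$ strongly on $\lpt$ as $\l\to\infty$. Since $R(P_n)\subset\hptz$, the min-max principle furnishes the uniform lower bound $P_n(-\p_x^2)P_n\ge(2\pi/P)^2 I$ on $R(P_n)$, independently of $n$. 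The main obstacle is to promote the strong convergence $K^\l\to 0$ to an operator-norm bound on $R(P_n)$ uniform in $n$; this will be handled by combining the uniform $\lpt$-bound on $K^\l$ with the decay furnished by the weight $w^\pm$, ensuring $\|P_nK^\l P_n\|<(2\pi/P)^2$ whenever $\l\ge\Lambda^*$, with a threshold independent of $n$. Once this is in place, $\Aoneln\ge cI>0$ on $R(P_n)$, so $neg(-\Aoneln)=n$, and combining with the previous step yields $neg(\Mnl)=1+n=n+1$, as claimed.
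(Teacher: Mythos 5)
Your overall architecture (Schur complement with respect to the lower-right $2\times2$ block, then Haynsworth inertia additivity) is sound and would deliver the count, but it hinges entirely on the step you yourself flag as ``the hard part'': a positive lower bound for $\Aoneln$ on $R(P_n)$ that is uniform in $n$ once $\l\ge\Lambda^*$. As written, this step is a genuine gap. Your proposed mechanism is to establish $\|P_nK^\l P_n\|<(2\pi/P)^2$ with a threshold independent of $n$; but since $P_nK^\l P_n\to K^\l$ strongly as $n\to\infty$, any such uniform-in-$n$ bound forces $\|K^\l\|<(2\pi/P)^2$ in \emph{operator norm}, whereas Lemma \ref{propql}(\ref{lainfty}) only provides \emph{strong} convergence $\Ql_\pm\to I$ --- indeed \S\ref{func-anal-back} of the paper exists precisely because these families do not converge in the norm topology. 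So ``combining the uniform $\lpt$-bound with the decay furnished by the weight'' is an assertion, not an argument. The fact you actually need is Lemma \ref{properties1}(\ref{operators-positive}): there exist $\gamma,\La>0$ such that $\Ail>\gamma>0$ for all $\l\ge\La$, $i=1,2$. This is a statement about the \emph{untruncated} operators, so it restricts immediately to $R(P_n)$ via $\left<\Aonel P_n\phi,P_n\phi\right>_{\lpt}\ge\gamma\|P_n\phi\|^2_{\lpt}$ with a threshold $\La$ manifestly independent of $n$; invoking it closes the gap in your reduction.

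For comparison, the paper does not use a Schur complement here at all. It exhibits directly an $n$-dimensional subspace $\{(0,\psi,0)\}$ on which the quadratic form of $\Mnl$ is positive (by $\Atwol>\gamma$), and an $(n+1)$-dimensional subspace $\{(\phi,0,b)\}$ on which it is negative (by $\Aonel>\gamma$, boundedness of $\Cl$ and $l^\l$, and a Cauchy--Schwarz with $\epsilon^2=1/\l$ that absorbs the cross term $2\left<\Cl b,P_n\phi\right>_{\lpt}$ into $-P\l^2b^2$); since the ambient space has dimension $2n+1$, this forces exactly $n+1$ negative eigenvalues. That route is shorter and avoids inverting $S_n^\l$, but both arguments ultimately rest on the same uniform positivity from Lemma \ref{properties1}, which is the one ingredient your proposal leaves unproved.
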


\begin{proof}
Since $\Mnl$ is a symmetric mapping on a finite-dimensional subspace of $\hptz\times\hpt\times\R$ that is $2n+1$-dimensional, it has $2n+1$ eigenvalues, all real. Letting $\psi\in\hpt$, we have

	\begin{equation}
	\left<\Mnl \left(\begin{array}{c}0\\\psi\\0\end{array}\right),\left(\begin{array}{c}0\\\psi\\0\end{array}\right)\right>_{\lpt\times\lpt\times\R}
	=
	\left<\Atwol Q_n\psi,Q_n\psi\right>_{\lpt}
	>
	0
	\end{equation}
for all $\la>\La$ by Lemma \ref{properties1}. This implies that $\Mnl$ is positive definite on a subspace of dimension $n$, and, therefore it has at least $n$ positive eigenvalues. Similarly, we now show that there exists a subspace of dimension $n+1$ on which $\Mnl$ is negative definite: Let $(\phi,0,b)\in\hptz\times\hpt\times\R$ and consider
	\begin{align}\label{neg-def}
	\left<\Mnl \left(\begin{array}{c}\phi\\0\\b\end{array}\right),\left(\begin{array}{c}\phi\\0\\b\end{array}\right)\right>_{\lpt\times\lpt\times\R}
	&=
	-\left<\Aonel P_n\phi,P_n\phi\right>_{\lpt}
	+
	2\left<\Cl b,P_n\phi\right>_{\lpt}
	-
	P(\la^2-l^\la)b^2.
	\end{align}
We estimate the second term:
	\begin{align*}
	2\left|\left<\Cl b,P_n\phi\right>_{\lpt}\right|
	\leq
	2\left\|\Cl b\right\|_{\lpt}\left\|P_n\phi\right\|_{\lpt}
	\leq
	\frac{\left\|\Cl b\right\|^2_{\lpt}}{\epsilon^2}+\epsilon^2\left\|P_n\phi\right\|^2_{\lpt}.
	\end{align*}
Letting $\epsilon^2=\frac{1}{\la}$, we have
	\begin{align*}
	\left<\Mnl \left(\begin{array}{c}\phi\\0\\b\end{array}\right),\left(\begin{array}{c}\phi\\0\\b\end{array}\right)\right>_{\lpt\times\lpt\times\R}
	\leq
	-\left<\Aonel P_n\phi,P_n\phi\right>_{\lpt}
	+
	\frac{\left\|P_n\phi\right\|^2_{\lpt}}{\la}
	-
	P(\la^2-l^\la)b^2
	+
	\la\left\|\Cl b\right\|^2_{\lpt}.
	\end{align*}
By Lemma \ref{properties1}, $\Aonel>\gamma>0$ for all $\la$ large enough, and therefore this expression is negative for all $\phi\in\hptz$ and $b\in\R$, since $l^\la$ and $\Cl$ are both bounded. Therefore, there exists  a $\La^*>0$ such that for every $\la\geq\La^*$ there exists an $n+1$ dimensional subspace and on which $\Mnl$ is negative definite. We conclude that
	\begin{equation}
	neg\left(\Mnl\right)=n+1,
	\hspace{.5cm}
	\text{for all }\la>\La^*.
	\end{equation}
Notice that $\La^*$ does not depend upon $n$.
\end{proof}

\section{The Limit $n\to\infty$}\label{limit-section}
\begin{lemma}\label{limit-lemma}
There exists $N_1>0$ such that for all $n>N_1$ it holds that $neg\left(\Aonezn\right)=neg\left(\Aonez\right)$ and $neg\left(\Atwozn+\left(\Bzn\right)^*\left(\Aonezn\right)^{-1}\Bzn\right)=neg\left(\Atwoz+\left(\Bz\right)^*\left(\Aonez\right)^{-1}\Bz\right)$.
\end{lemma}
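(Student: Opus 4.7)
My plan is to handle the two equalities in Lemma~\ref{limit-lemma} separately. The first is almost immediate from the spectral-projection nature of $P_n$; the second is the substantive content and follows a strong-convergence argument in the spirit of Propositions~\ref{first-prop}--\ref{second-prop}.

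For the first equality, $\Aonez=-\p_x^2+K$ has discrete spectrum accumulating only at $+\infty$ and therefore admits only finitely many non-positive eigenvalues. Let $N_0$ denote their total multiplicity. Whenever $n\geq N_0$, the range $R(P_n)$ contains all eigenspaces associated with these eigenvalues, and since $R(P_n)$ is $\Aonez$-invariant, $\Aonezn=P_n\Aonez P_n$ acts on $R(P_n)$ as the bare restriction of $\Aonez$ and vanishes on $R(P_n)^{\perp}$. Consequently the negative eigenvalues of $\Aonezn$ coincide with those of $\Aonez$, yielding $neg(\Aonezn)=neg(\Aonez)$ for all such $n$.

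For the second equality, write $L:=\Atwoz+(\Bz)^{*}(\Aonez)^{-1}\Bz$ and $L_n:=\Atwozn+(\Bzn)^{*}(\Aonezn)^{-1}\Bzn$. The key observation is that $P_n$, being the spectral projection of $\Aonez$, commutes with $(\Aonez)^{-1}$ on the orthogonal complement of its kernel; hence $(\Aonezn)^{-1}=P_n(\Aonez)^{-1}P_n$ on $R(P_n)$ once $n\geq N_0$. Combined with $\Bzn=P_n\Bz Q_n$, this gives the representation $L_n=Q_n\bigl[\Atwoz+(\Bz)^{*}P_n(\Aonez)^{-1}P_n\Bz\bigr]Q_n$. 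Using Lemmas~\ref{uniform-bound} and~\ref{lemma-restrict}, together with the strong convergences $P_n\to I$ and $Q_n\to I$ in the relevant Sobolev topologies, the bracketed operator converges strongly to $L$. I would then mimic the continuity-through-contradiction argument of Propositions~\ref{first-prop}--\ref{second-prop}, with the role of $\l$ now played by $1/n$: if $neg(L_n)\neq neg(L)$ along a subsequence, then continuous dependence of the spectrum of the finite-dimensional $L_n$ on $n$ would force an eigenvalue to cross some point $\sigma\in\rho(L)$ lying to the left of $0$, and a Corollary~\ref{cor:limit}-type passage to the limit would place $\sigma$ in $\Sigma(L)$, a contradiction.

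The main obstacle is that $Q_n$ is the spectral projection of $\Atwoz$, not of $L$, so $R(Q_n)$ is not $L$-invariant and the clean argument that worked for the first equality does not transfer. One must genuinely use strong convergence of $L_n$ to $L$ together with the discreteness of $\Sigma(L)$, which is guaranteed because $L$ is a Schr\"odinger-type operator plus a bounded selfadjoint perturbation. A secondary subtlety is well-definedness of the composition $(\Aonez)^{-1}\Bz$, to be supplied by Lemma~\ref{well-defined}; without it one cannot even view $L$ as a bona fide selfadjoint operator. If $0\in\Sigma(L)$ the strategy above yields only the inequality $neg(L_n)\geq neg(L)$, paralleling Proposition~\ref{second-prop}; I expect the equality stated in the lemma to rest implicitly on the invertibility provided by Lemma~\ref{well-defined}.
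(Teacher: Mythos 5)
Your treatment of the first equality is correct and is exactly what the paper intends: since $P_n$ is the spectral projection of $\Aonez$ onto its first $n$ eigenvalues and $\Aonez$ has finitely many negative eigenvalues, $\Aonezn$ restricted to $R(P_n)$ is just $\Aonez$ restricted to an invariant subspace containing the whole negative eigenspace. You also correctly isolate the real difficulty in the second equality, which the paper's own proof passes over in one sentence: $Q_n$ is the spectral projection of $\Atwoz$, not of $\Konez=\Atwoz+(\Bz)^*(\Aonez)^{-1}\Bz$, so $R(Q_n)$ is not $\Konez$-invariant. Your representation $\Konezn=Q_n\bigl[\Atwoz+(\Bz)^*P_n(\Aonez)^{-1}P_n\Bz\bigr]Q_n$ (valid for $n$ large, using that $P_n$ commutes with $(\Aonez)^{-1}$ on $\{1\}^\perp$) and the observation that the bracketed operator converges strongly to $\Konez$ are both correct and useful.

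The gap is in the mechanism you propose to convert strong convergence into equality of $neg$. The crossing argument of Propositions \ref{first-prop} and \ref{second-prop} relies essentially on $\l$ being a \emph{continuous} parameter: for fixed $n$, $\Sigma(A_n^\l)$ varies continuously in $\l$, so a change in $neg$ forces an eigenvalue to pass through $0$ (or through $\sigma$) at some intermediate $\l_n'$. There is no analogue with ``the role of $\l$ played by $1/n$'': $n$ is discrete, the operators $\Konezn$ and $\mathcal{K}^0_{1,n+1}$ act on different subspaces, and nothing forces an eigenvalue to ``cross'' any point as $n$ increments, so no intermediate value $\l_n'$ exists to feed into a Corollary \ref{cor:limit}-type statement. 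The two directions have to be argued separately and differently: for $neg(\Konezn)\geq neg(\Konez)$, push the negative eigenspace of $\Konez$ forward by $Q_n$ and use Lemma \ref{lemma-restrict} plus Lemma \ref{uniform-bound} to see that the Rayleigh quotients converge, so $\Konezn$ is negative definite on a subspace of the full dimension; for the reverse inequality, choose $\sigma<0$ with $[\sigma,0)\cap\Sigma(\Konez)=\emptyset$ and run the compactness argument of Proposition \ref{uniform} (slightly strengthened to allow eigenvalues $\sigma_n\to\sigma_0\in[-C,\sigma]$ rather than a fixed $\sigma$) on a putative excess family of orthonormal negative eigenfunctions. Note that this still only yields $neg(\Konezn)\leq neg(\Konez)+z(\Konez)$ unless one knows $0\notin\Sigma(\Konez)$ or rules out eigenvalues of $\Konezn$ in $[\sigma,0)$; your instinct that something extra is needed here is right, but Lemma \ref{well-defined} does not supply it --- it gives invertibility of $\Aonez$ on $\{1\}^\perp$, not invertibility of $\Konez$. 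This delicate point is not addressed by the paper's proof either.
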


\begin{proof}
By Lemmas \ref{properties1} and \ref{k-op} we know that both $\Aonez$ and $\Atwoz+\left(\Bz\right)^*\left(\Aonez\right)^{-1}\Bz$ are operators of the type discussed in \S\ref{func-anal-back}, that is, they are both of the form $-\triangle+K$, where $K$ is a bounded operator. Therefore, both operators have finitely many negative eigenvalues. By the definitions of the projection operators $P_n$ and $Q_n$ it is clear that for $n$ that is sufficiently large the truncated operators will recover the negative eigenspace of the original operators.
\end{proof}

\begin{lemma}\label{approx-matrix-op-lemma}
\begin{enumerate}
\item
Under the hypothesis \eqref{mainthm-condition} of Theorem \ref{mainthm}, there exists $N>0$ such that for every fixed $n>N$, there exists $0<\l_n<\infty$ such that $\Mnln$ has a nontrivial kernel. Moreover, the (real) numbers $\l_n$ are bounded uniformly away from $0$ and $\infty$.
\item
The same conclusion holds under the hypothesis \eqref{mainthm2-condition} of Theorem \ref{mainthm2}.
\end{enumerate}
\end{lemma}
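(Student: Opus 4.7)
My plan is a continuation argument in $\l$ applied to the finite-dimensional self-adjoint operator $\Mnl$, which acts on a $(2n+1)$-dimensional space. The family $\{\Mnl\}_{\l \geq 0}$ depends continuously on $\l$ throughout $[0, \infty)$: on $(0, \infty)$ by Lemma \ref{propql}, and at $\l = 0$ because the strong convergence $\Ql_\pm \to \Proj^\pm$ (and the induced convergence $l^\l \to l^0$) automatically becomes norm convergence when composed with the finite-rank projections $P_n, Q_n$. Hence $\Sigma(\Mnl)$ varies continuously in $\l$, and $neg(\Mnl)$ can change only when an eigenvalue crosses zero; it therefore suffices to produce a discrepancy between $neg(\Mnl)$ at $\l = 0$ and at $\l > \La^*$, then localize the crossing.

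For the count at $\l = 0$: by hypothesis $\ker \Aonez$ consists of constants, so together with the restriction to $\hptz$ the operator $\Aonez$ is invertible on $\hptz$, and by Lemma \ref{limit-lemma} so is $\Aonezn$ on $R(P_n)$ for $n \geq N_1$. The block-diagonalization of \S\ref{large-small} (equation \eqref{diag-l0}), combined with $neg(-\Aonezn) = n - neg(\Aonez)$ and $neg(l^0) = 1 - neg(-l^0)$ (using $l^0 \neq 0$), gives
\[
neg(\Mnz) - (n+1) \;=\; neg(\Konez) - neg(\Aonez) - neg(-l^0), \qquad n \geq N_1.
\]
Under \eqref{mainthm-condition} the right side is $\geq 1$, so $neg(\Mnz) \geq n+2$; under \eqref{mainthm2-condition} it is nonzero, so $neg(\Mnz) \neq n+1$. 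On the other hand, Lemma \ref{large-la} gives $neg(\Mnl) = n+1$ for every $\l > \La^*$, uniformly in $n$.

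The crucial intermediate step is to find $\l^* > 0$ and $N_0$, both independent of $n$, such that $neg(\Mnl) \geq neg(\Mnz)$ for all $\l \in (0, \l^*)$ and $n > N_0$, with equality when $0 \in \rho(\Mz)$. This is the natural extension of Propositions \ref{first-prop2}--\ref{second-prop2} to our $3 \times 3$ block operator: the extra scalar block $-P(\l^2 - l^\l)$ is continuous in $\l$ and converges to $Pl^0$, and the contradiction-plus-elliptic-bootstrap argument of Proposition \ref{uniform2} applies verbatim to a triple $u_n = (\phi_n, \psi_n, b_n)^T$, with the scalar component $b_n$ automatically compact by boundedness. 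For Theorem \ref{mainthm} one concludes $neg(\Mnl) \geq n+2$ on $(0, \l^*)$ and $neg(\Mnl) = n+1$ on $(\La^*, \infty)$, yielding some $\l_n \in [\l^*, \La^*]$ with $0 \in \Sigma(\Mnln)$. For Theorem \ref{mainthm2} the additional hypothesis $\ker \Atwoz = \{0\}$ is invoked in a parallel but more delicate case analysis (distinguishing $0 \in \rho(\Mz)$ from $0 \in \Sigma(\Mz)$) to force the same type of crossing under \eqref{mainthm2-condition}.

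The main obstacle I anticipate is precisely this uniformity in $n$ of $\l^*$: at fixed $n$ the continuation is elementary, but the uniform version requires carrying the strong-convergence machinery of \S\ref{func-anal-back} through the $3 \times 3$ block and the scalar component. A secondary technical point is the case analysis needed for Theorem \ref{mainthm2}, where a one-sided lower bound on $neg(\Mnl)$ may not suffice and the $\Atwoz$ hypothesis must be exploited to rule out a jump of $neg(\Mnl)$ directly to $n+1$ as $\l$ increases past $0$ without passing through a true kernel of $\Mnln$ at some intermediate $\l_n > 0$.
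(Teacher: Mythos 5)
Your proposal is correct and follows essentially the same route as the paper: the count $neg(\Mnz)=neg(\Konez)+n-neg(\Aonez)+neg(l^0)$ from the block-diagonalization \eqref{diag-l0} and Lemma \ref{limit-lemma}, the count $neg(\Mnl)=n+1$ for $\l>\La^*$ from Lemma \ref{large-la}, the $n$-uniform small-$\l$ comparison $neg(\Mnl)\geq neg(\Mnz)$ (resp.\ equality) via Propositions \ref{second-prop2} and \ref{first-prop2}, and the eigenvalue-crossing conclusion on $[\l^*,\La^*]$. Your explicit remark that Propositions \ref{first-prop2}--\ref{second-prop2} must be carried over to the $3\times3$ block with the scalar component is a fair observation about a step the paper takes for granted, but it does not change the argument.
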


\begin{proof}
\begin{enumerate}
\item
We first show that the statement holds under the hypothesis \eqref{mainthm-condition} imposed in Theorem \ref{mainthm}, namely
	$$
	neg\left(\Atwoz+\left(\Bz\right)^*\left(\Aonez\right)^{-1}\Bz\right)> neg\left(\Aonez\right)+neg(-l^0),
	$$
as well as the assumption that $\ker\Aonez$ contains only the constant functions and that $l^0\neq0$. Therefore this inequality may be rewritten as
	\be\label{first-eq}
	neg\left(\Atwoz+\left(\Bz\right)^*\left(\Aonez\right)^{-1}\Bz\right)-neg\left(\Aonez\right)-1+neg(l^0)+n>n.
	\en
	
Now, by \eqref{diag-l0}, by Lemma \ref{limit-lemma} and using the fact that $z\left(\Aonez\right)=0$ on $\hptz$ (recalling that $\hptz$ does not include the constant functions),
	\begin{align}
	neg\left(\Mnz\right)
	&=
	neg\left(\Atwozn+\left(\Bzn\right)^*\left(\Aonezn\right)^{-1}\Bzn\right)+n-z\left(\Aonezn\right)-neg\left(\Aonezn\right)+neg\left(l^0\right)\label{second-eq}\\
	&=
	neg\left(\Atwoz+\left(\Bz\right)^*\left(\Aonez\right)^{-1}\Bz\right)+n-neg\left(\Aonez\right)+neg\left(l^0\right)\nonumber
	\end{align}
for all $n>N_1$ (where $N_1$ is given in Lemma \ref{limit-lemma}).
By Proposition \ref{second-prop2}, there exist $\l^*$ and $N_2$ such that for all $0<\l\leq\l^*$ and for all $n>N_2$
	\be\label{third-eq}
	neg(\Mnl)\geq neg(\Mnz).
	\en
By Lemma \ref{large-la} there exists $\Lambda^*$ such that for all $n$ and for all $\l\geq\Lambda^*$
	\be\label{fourth-eq}
	neg(\Mnl)=n+1.
	\en

Finally, combining \eqref{first-eq}-\eqref{fourth-eq}, we have
	\begin{align*}
	neg(\mathcal{M}_n^{\l^*})
	&\geq
	neg(\Mnz)\\
	&=
	neg\left(\Atwoz+\left(\Bz\right)^*\left(\Aonez\right)^{-1}\Bz\right)+n-neg\left(\Aonez\right)+neg\left(l^0\right)\\
	&>
	n+1\\
	&=
	neg(\mathcal{M}_n^{\Lambda^*})
	\end{align*}
for all $n>N:=\max(N_1,N_2)$. Since $\Mnl$ is a finite-dimensional mapping, for fixed $n$ its spectrum, which is real, varies continuously (as a set) as $\l$ varies. Hence at least one eigenvalue must cross $0$ from left to right as $\l$ varies between $\l^*$ and $\Lambda^*$. Denoting a value of $\l$ for which $\Mnl$ has a nontrivial kernel by $\l_n$ (noting that it depends on $n$), we deduce that
	\be\label{approx-kernel}
	\Mnln u_n=0
	\en
for some $u_n\neq0$, and $0<\l^*<\l_n<\Lambda^*<\infty$.\\

\item
In the context of Theorem \ref{mainthm2}, \eqref{first-eq} becomes
	\be\label{first-eq2}
	neg\left(\Atwoz+\left(\Bz\right)^*\left(\Aonez\right)^{-1}\Bz\right)-neg\left(\Aonez\right)-1+neg(l^0)+n\neq n.
	\en
Using Proposition \ref{first-prop2}, \eqref{third-eq} becomes
	\be\label{third-eq2}
	neg(\Mnl)= neg(\Mnz)
	\en
for all $0\leq\l\leq\l^*$.
Therefore, combining \eqref{second-eq}, \eqref{fourth-eq}, \eqref{first-eq2}, \eqref{third-eq2}, 
	\begin{align*}
	neg(\mathcal{M}_n^{\l^*})
	&=
	neg(\Mnz)\\
	&=
	neg\left(\Atwoz+\left(\Bz\right)^*\left(\Aonez\right)^{-1}\Bz\right)+n-neg\left(\Aonez\right)+neg\left(l^0\right)\\
	&\neq
	n+1\\
	&=
	neg(\mathcal{M}_n^{\Lambda^*})
	\end{align*}
and therefore the conclusion remains: there exists $0<\l_n<\infty$ for which $\Mnln$ has a nontrivial kernel.
\end{enumerate}
\end{proof}

Now we make use of \S\ref{func-anal-back} to reach a conclusion about the untruncated operator:
\begin{lemma}\label{untrunc-problem}
There exists $0<\l_0<\infty$ and some $u^T_0=(\phi_0,\psi_0,b_0)\neq(0,0,0)$ such that
	\be
	\Mlz u_0=0.
	\en
\end{lemma}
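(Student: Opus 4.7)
The plan is to pass to the limit in the approximate problem $\Mnln u_n = 0$ produced by Lemma \ref{approx-matrix-op-lemma}, by adapting the limiting argument of Proposition \ref{uniform2} (equivalently, the contrapositive statement of Corollary \ref{cor:limit2} and Proposition \ref{prop:limit}) from the $2\times 2$ setting to the $3\times 3$ matrix operator $\Ml$. Concretely, Lemma \ref{approx-matrix-op-lemma} supplies sequences $\l_n \in (\l^*, \Lambda^*)$ and nonzero vectors $u_n = (\phi_n, \psi_n, b_n)$ lying in the range of the finite-rank projection defining $\Mnl$, with $\Mnln u_n = 0$. I would normalize $\|u_n\|_{\lpt \times \lpt \times \R}^2 = \|\phi_n\|_{\lpt}^2 + \|\psi_n\|_{\lpt}^2 + b_n^2 = 1$, and then extract subsequences so that $\l_n \to \l_0$ for some $\l_0 \in [\l^*, \Lambda^*] \subset (0, \infty)$ and $b_n \to b_0 \in \R$.

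Next I would extract $\lpt$-strong limits of $\phi_n, \psi_n$ and pass the system to the limit. Pairing the first row of $\Mnln u_n = 0$ against $\phi_n$ and the second row against $\psi_n$ yields, exactly as in \eqref{h1-bound}, uniform bounds
	\begin{equation*}
	\|\p_x P_n \phi_n\|_0^2 \leq C, \qquad \|\p_x Q_n \psi_n\|_0^2 \leq C,
	\end{equation*}
where the cross contributions from $\Bl, \Cl, \Dl$ and the scalar $l^\l$ are controlled uniformly by Lemma \ref{propql}(\ref{qlnorm}) together with the boundedness of $b_n$ and the $\l$-dependent term $\l^2$ in $\Atwol$ being evaluated at the bounded parameter $\l_n$. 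Rellich compactness then provides a further subsequence along which $P_n \phi_n \to \phi_0$ and $Q_n \psi_n \to \psi_0$ strongly in $\lpt$ and weakly in $H^1_P$; as in the proof of Proposition \ref{uniform2}, $P_n \p_x^2 P_n \phi_n$ and $Q_n \p_x^2 Q_n \psi_n$ converge weakly in $H^{-1}$ to $\p_x^2 \phi_0$ and $\p_x^2 \psi_0$, respectively. The zeroth-order terms pass to their limits by combining Lemma \ref{uniform-bound} with Lemma \ref{propql}(\ref{lanotz}), which supplies operator-norm convergence of $\Ql_\pm$ at $\l_n \to \l_0 > 0$ (this is precisely the step where the lower bound $\l_0 \geq \l^* > 0$ is essential, since Lemma \ref{propql}(\ref{laz}) gives only strong convergence at $\l = 0$). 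An elliptic bootstrap then upgrades the regularity to $\phi_0 \in \hptz$, $\psi_0 \in \hpt$, and yields $\Mlz u_0 = 0$ with $u_0 = (\phi_0, \psi_0, b_0)$.

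The main obstacle is verifying that $u_0 \neq (0,0,0)$, but this is built into the construction: since $P_n \phi_n \to \phi_0$ and $Q_n \psi_n \to \psi_0$ strongly in $\lpt$ and $b_n \to b_0$, the unit-norm normalization passes to the limit to give $\|\phi_0\|_{\lpt}^2 + \|\psi_0\|_{\lpt}^2 + b_0^2 = 1$. Finally, the trivial kernel element $u_{triv} = (1, 0, 0)$ noted in Remark \ref{remark} is automatically excluded because $\phi_n \in \hptz$ (zero mean) forces $\phi_0 \in \hptz$ as well, so $u_0$ delivers a genuine nontrivial solution of $\Mlz u_0 = 0$ and hence the desired growing mode.
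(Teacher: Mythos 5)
Your proposal is correct and follows essentially the same route as the paper: the paper extracts a convergent subsequence $\l_n\to\l_0\in[\l^*,\Lambda^*]$ and then simply cites Proposition \ref{prop:limit} (the $\l_0>0$ version of the compactness argument in Proposition \ref{uniform2}) to obtain $0\in\Sigma(\mathcal{M}^{\l_0})$, which is exactly the argument you write out in detail for the $3\times3$ operator. Your explicit treatment of the normalization, the scalar component $b_n$, and the exclusion of $u_{triv}$ via the zero-mean constraint on $\phi_n$ fills in details the paper leaves implicit, but the underlying idea is identical.
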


\begin{proof}
By Lemma \ref{approx-matrix-op-lemma} there exist $\l_n$ bounded uniformly away from $0$ and $\infty$ ($0<\l^*<\l_n<\Lambda^*<\infty$), that satisfy \eqref{approx-kernel} with $u_n\neq0$. Therefore there exists a subsequence of $\{\l_n\}_{n=N}^\infty$ that converges to some $0<\l_0<\infty$. The existence of $u_0\neq0$ is guaranteed by Proposition \ref{prop:limit}.
\end{proof}

\begin{proof}[Proof of Theorems \ref{mainthm} and \ref{mainthm2}]\label{proof:theorems}
In \S\ref{setup-section} we showed that finding a growing mode solution of the form \eqref{purely} is equivalent to finding a nontrivial kernel for $\Ml$ for some $0<\l<\infty$. Using the tools developed in \S\ref{func-anal-back}, in \S\ref{limit-section} we show that, indeed, there exists such a value of $\l$. The last ingredient of the proof, which we omit here and which can be found in \cite[\S5]{benartzi1} and \cite[\S8]{rvm2} is to verify that one can indeed construct a growing mode from the potentials $\phi_0,\psi_0,b_0$ that we have found in Lemma \ref{untrunc-problem}. This part is relatively straight-forward, as one can define $f^\pm,E,B$ from the potentials, and then verify that they satisfy the linearized system.
\end{proof}

\section{The Operators}\label{the-operators}
In this section we state the important properties of the operators appearing in this paper. Most of these properties are proved in detail in \cite{benartzi1}. We only discuss the proofs of new properties, notably Lemmas \ref{well-defined} and \ref{k-op}.
\begin{lemma}[Properties of $\Aonel, \Atwol$]\label{properties1}
Let $0\leq\la<\infty$.
	\begin{enumerate}
	\item\label{op-norm}
	$\Aonel$ is selfadjoint on $\lptz$. $\Atwol$ is selfadjoint on $\lpt$. Their domains are $\hptz$ and $\hpt$, respectively, and their spectra are discrete.
	
	\item
	For all $h(x)\in\hptz$, $\|\Aonel h-\Aonez h\|_{\lpt}\to0$ as $\la\to0$. The same is true for $\Atwol$ with $h(x)\in\hpt$.

	\item
	For $i=1,2$ and $\sigma>0$, it holds that $\|\Ail-\Ais\|=O(|\la-\sigma|)$ as $\la\to\sigma$, where $\|\cdot\|$ is the operator 	norm from $\hptz$ to $\lpt$ in the case $i=1$, and from $\hpt$ to $\lpt$ in the case $i=2$.

	\item
	For all $h(x)\in\hptz$, $\|\Aonel h+\p_x^2h\|_{\lpt}\to0$ as $\la\to\infty$.

	\item
	When thought of as acting on $\hpt$ (rather than $\hptz$), the null spaces of $\Aonel$ and $\Aonez$ both contain the constant functions.
	
	\item\label{operators-positive}
	There exist constants $\gamma>0$ and $\La>0$ such that for all $\la\geq\La$, $\Ail>\gamma>0$, $i=1,2$.
	\end{enumerate}
\end{lemma}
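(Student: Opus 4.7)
The plan is to write $\Aonel=-\p_x^2+K_1^\l$ and $\Atwol=-\p_x^2+\l^2 I+K_2^\l$, where $K_i^\l$ collects the two $v$-integral terms, and then transfer every stated property from a corresponding property of $\Ql_\pm$ in Lemma~\ref{propql}. Boundedness of $K_i^\l$ on $\lpt$ is immediate: by \eqref{weight2} the density $(\mu^\pm_e)^2/w^\pm$ is integrable in $v$, and $\|\Ql_\pm\|_{\lwt\to\lwt}\leq 1$ by Lemma~\ref{propql}\eqref{qlnorm}, so a pointwise-in-$x$ weighted Cauchy--Schwarz gives
\[
\Bigl\|\int\mu^\pm_e\,\Ql_\pm h\,dv\Bigr\|_{\lpt}\leq C\|h\|_{\lpt}.
\]
Since $-\p_x^2$ on $H_P^2$ is self-adjoint with discrete spectrum, once symmetry of $K_i^\l$ is known each $\Ail$ inherits self-adjointness on $\hpt$ (resp.\ $\hptz$), and compactness of the Rellich embedding $\hpt\hookrightarrow\lpt$ then yields compact resolvent and hence discrete spectrum.

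For part (1), symmetry of $K_i^\l$ reduces to an adjoint identity for $\Ql_\pm$ against the weight $w^\pm\mu^\pm_e$; this uses the skew-adjointness of $D^\pm$ on $\lwt$ together with the fact that $\mu^\pm_e$ is constant along the characteristic flow, and follows the same computation as in \cite{benartzi1}. Part (5) is immediate since constants lie in $\ker D^\pm$, so $\Ql_\pm c=c$ and the two $v$-integrals in $\Aonel$ cancel on constants.

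For (2)--(4) I push the convergence of $\Ql_\pm$ through the $v$-integration via
\[
\Bigl\|\int \mu^\pm_e\bigl(\Ql_\pm h-\Proj^\pm h\bigr)\,dv\Bigr\|_{\lpt}^2
\leq
\Bigl(\int\tfrac{|\mu^\pm_e|^2}{w^\pm}\,dv\Bigr)\,\|\Ql_\pm h-\Proj^\pm h\|_\pm^2.
\]
Lemma~\ref{propql}\eqref{laz} then gives (2); the same inequality with Lemma~\ref{propql}\eqref{lanotz} gives the $O(|\l-\sigma|)$ operator-norm bound in (3); and Lemma~\ref{propql}\eqref{lainfty} gives $K_1^\l h\to 0$ in $\lpt$, which is exactly (4).

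Finally, (6) for $\Atwol$ is clear since $\l^2$ dominates the uniformly bounded $K_2^\l$. The main obstacle, as I see it, is (6) for $\Aonel$: Lemma~\ref{propql}\eqref{lainfty} only provides strong convergence $K_1^\l h\to 0$, not operator-norm convergence, so the Poincar\'e bound $\|h'\|^2\geq(2\pi/P)^2\|h\|^2$ on $\hptz$ does not directly yield a uniform positive lower bound on $\Aonel$. I would argue by contradiction: if there existed $\l_n\to\infty$ and $h_n\in\hptz$ with $\|h_n\|_{\lpt}=1$ and $\langle\Aonel h_n,h_n\rangle\to c\leq 0$, then $\{h_n\}$ would be bounded in $H_P^1$, so by Rellich a subsequence converges strongly in $\lpt$ to some $h\in\hptz$ with $\|h\|=1$; uniform boundedness of $K_1^{\l_n}$ together with the strong convergence $K_1^{\l_n}h\to 0$ then forces $\langle K_1^{\l_n}h_n,h_n\rangle\to 0$, leaving $c\geq\|h'\|^2\geq(2\pi/P)^2>0$, a contradiction.
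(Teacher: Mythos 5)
The paper itself gives no proof of this lemma (it is one of the technical statements deferred to \cite{benartzi1}), but your argument is correct and follows the expected route: write $\Ail=-\p_x^2(+\l^2)+K_i^\l$ with $K_i^\l$ a bounded symmetric perturbation controlled through the weighted Cauchy--Schwarz reduction to Lemma \ref{propql}, noting also that the same adjoint identity you invoke (namely that $(\Ql_\pm)^*$ fixes functions of $(e^\pm,p^\pm)$) is what guarantees $\Aonel$ actually maps into $\lptz$, as selfadjointness \emph{on} $\lptz$ requires. Your contradiction-plus-Rellich argument for part (\ref{operators-positive}) in the case of $\Aonel$ correctly isolates and resolves the only genuinely delicate point, that $K_1^\l\to0$ only strongly as $\l\to\infty$, which in the monotone setting of \cite{rvm1} is instead bypassed by the sign condition $\mu^\pm_e<0$ that is unavailable here.
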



\begin{lemma}[Properties of $\Bl, \Cl, \Dl$]\label{properties2}
Let $0<\la<\infty$.

	\begin{enumerate}
	\item\label{blnorm}
	$\Bl$ maps $\lpt\to\lpt$ with operator bound independent of $\la$. Moreover, $R(\Bz)\subseteq\{1\}^\perp$.
	
	\item\label{bz}
	For all $h(x)\in\lpt$, as $\la\to0$ we have: $\|\Bl h-\Bz h\|_{\lpt}\to0$ and $\|\Cl h\|_{\lpt},\|\Dl h\|_{\lpt}\to0$.

	\item
	If $\sigma>0$, then $\|\Bl-\Bs\|=O(|\la-\sigma|)$ as $\la\to\sigma$, where $\|\cdot\|$ is the operator norm from $\lpt$ to $\lpt$. The same is true for $\Cl, \Dl$.

	\item\label{bl-infty}
	For all $h(x)\in\lpt$, $\|\Bl h\|_{\lpt}\to0$ as $\la\to\infty$. The same is true for $\Cl, \Dl$.
	
	\end{enumerate}
\end{lemma}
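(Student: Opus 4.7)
The plan is to reduce every claim to the corresponding property of $\Ql_\pm$ from Lemma \ref{propql}, via a single Cauchy--Schwarz estimate in the $v$-variable that converts the weighted $\lwt$-norm into the unweighted $\lpt$-norm. Concretely, if $G(x) = \int \mu^\pm_e\, m(x,v)\,dv$ for some $m\in \lwt$, then by \eqref{weight2},
\begin{equation*}
|G(x)|^2 \leq \left(\int \frac{|\mu^\pm_e|^2}{w^\pm}\,dv\right)\!\left(\int |m(x,v)|^2 w^\pm\,dv\right) \leq C \int |m(x,v)|^2 w^\pm\,dv,
\end{equation*}
where the first factor is uniformly bounded in $x$ because $e^\pm-\left<v\right>=\pm\phi^0$ is bounded and $\alpha>2$ forces $\int w^\pm\,dv<\infty$. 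Integrating in $x$ yields $\|G\|_{\lpt}\leq C\|m\|_\pm$, and the same bound works with $\mu^\pm_p$ or $\hat{v}_j\mu^\pm_e$ in place of $\mu^\pm_e$.

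Item (1) is then immediate: the multiplicative piece of $\Bl$ is a bounded multiplication operator, and the Cauchy--Schwarz bound applied to $m=\Ql_\pm(\hat{v}_2h)$, together with $\|\Ql_\pm\|_{\lwt\to\lwt}=1$ from Lemma \ref{propql}(\ref{qlnorm}), gives $\lpt\to\lpt$ bounds uniform in $\l$. For $R(\Bz)\subseteq\{1\}^\perp$, I would use that $w^\pm$ and $\mu^\pm_e$ are both functions of $(e^\pm,p^\pm)$, hence $\mu^\pm_e/w^\pm\in\ker D^\pm$, so by self-adjointness of $\Proj^\pm$ on $\lwt$,
\begin{equation*}
\int_0^P\!\!\int \mu^\pm_e \Proj^\pm(\hat{v}_2 h)\,dv\,dx = \left<\mu^\pm_e/w^\pm,\Proj^\pm(\hat{v}_2 h)\right>_\pm = \left<\mu^\pm_e/w^\pm,\hat{v}_2 h\right>_\pm = \int_0^P\!\!\int \mu^\pm_e\hat{v}_2 h\,dv\,dx.
\end{equation*}
Therefore $\int_0^P \Bz h\,dx = \int_0^P h\sum_\pm\int (\mu^\pm_p+\hat{v}_2\mu^\pm_e)\,dv\,dx$, and the inner integrand equals $\p_{v_2}\mu^\pm$ by the chain rule, which integrates to $0$ in $v_2\in\R$ by decay of $\mu^\pm$.

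For the convergence claims (2)--(4), the $\l$-independent multiplicative part of $\Bl$ cancels against the limiting value of the $\Ql_\pm$-part in each case, so only the difference $\Ql_\pm-(\text{limit})$ appears. For (2), the difference is $\sum_\pm\int\mu^\pm_e(\Ql_\pm-\Proj^\pm)(\hat{v}_2h)\,dv$, and the Cauchy--Schwarz bound above combined with Lemma \ref{propql}(\ref{laz}) gives $\|\Bl h-\Bz h\|_{\lpt}\to 0$. For (3), the same reduction with Lemma \ref{propql}(\ref{lanotz}) yields $\|\Bl-\Bs\|=O(|\l-\sigma|)$, noting that $\|\hat{v}_2 h\|_\pm\leq C\|h\|_{\lpt}$ is independent of $\l$. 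For (4), Lemma \ref{propql}(\ref{lainfty}) gives $\Ql_\pm(\hat{v}_2h)\to\hat{v}_2 h$ in $\lwt$, and after cancellation with the multiplicative term the limit of $\Bl h$ becomes $h\sum_\pm\int\p_{v_2}\mu^\pm\,dv=0$.

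For $\Cl$ and $\Dl$, the main additional input is parity in $v_1$. Since $\left<v\right>$ is even in $v_1$ and $p^\pm$ is independent of $v_1$, the function $\mu^\pm_e(e^\pm,p^\pm)$ is even in $v_1$, while $\hat{v}_1$ is odd. By Lemma \ref{proj-parity}, $\Proj^\pm(\hat{v}_1)$ is odd in $v_1$, so $\int\mu^\pm_e\Proj^\pm(\hat{v}_1)\,dv=0$ (and likewise with an extra factor $\hat{v}_2$, which is even in $v_1$); this gives the $\l\to 0$ halves of (2) for $\Cl,\Dl$. The identical parity cancellation, now with $\Ql_\pm(\hat{v}_1)\to\hat{v}_1$ in $\lwt$, handles (4). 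Continuous dependence (3) for $\Cl,\Dl$ is proved as for $\Bl$ via Lemma \ref{propql}(\ref{lanotz}). The only delicate point I expect is keeping the Cauchy--Schwarz factor uniform in $x$, which is precisely why the decay condition $\alpha>2$ in \eqref{weight} is essential; beyond that the lemma is a mechanical translation of Lemma \ref{propql}, with parity supplying the vanishing of the limit operators.
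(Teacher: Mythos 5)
Your proof is correct and is essentially the argument this paper relies on: the paper's own ``proof'' simply defers to \cite{benartzi1} and \cite[Lemma 2.4]{rvm1}, and your three ingredients --- the weighted Cauchy--Schwarz estimate (valid uniformly in $x$ precisely because $\alpha>2$ and $\phi^0$ is bounded) reducing everything to Lemma \ref{propql}, the self-adjointness of $\Proj^\pm$ applied to $\mu^\pm_e/w^\pm\in\ker D^\pm\cap\lwt$ together with $\mu^\pm_p+\hat{v}_2\mu^\pm_e=\p_{v_2}\mu^\pm$ for the mean-zero claim, and the $v_1$-parity cancellation for $\Cl,\Dl$ --- are exactly the computations those citations contain. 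No gaps.
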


\begin{proof}
This proof is presented almost in its entirety in \cite{benartzi1}, except for the statements of parts \ref{blnorm} and \ref{bz}. In part \ref{blnorm} we have added the statement about the range of $\Bz$. It is easily verified that $\int_0^P\Bz h\;dx=0$ (see \cite[Lemma 2.4]{rvm1}). As for part \ref{bz} of the lemma: in \cite{benartzi1} we showed that $\Bl\xrightarrow{s}\Bz=0$. In the current setting, it is not true that $\Bz$ is trivial, however, it still \emph{is} true that $\Bl\xrightarrow{s}\Bz$.
\end{proof}

\begin{lemma}[Properties of $l^\la$]\label{propll}
Let $0<\la<\infty$.
	\begin{enumerate}
	\item
	$l^\la\to l^0$ as $\la\to0$.
	
	\item
	$l^\la$ is uniformly bounded in $\la$.
	
	\end{enumerate}
\end{lemma}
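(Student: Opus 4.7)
The plan is to rewrite both $l^\l$ and $l^\l-l^0$ as inner products in $L_\pm^2$ and then invoke the corresponding convergence/boundedness properties of $\Ql_\pm$ from Lemma \ref{propql}. The key observation is that the double integral $\int_0^P\int\hat v_1\mu_e^\pm F\,dv\,dx$ equals $\left\langle \hat v_1\mu_e^\pm/w^\pm,\;F\right\rangle_\pm$, since the inner product on $L_\pm^2$ carries the weight $w^\pm$. Thus, with $F=\Ql_\pm(\hat v_1)$ or $F=(\Ql_\pm-\Proj^\pm)(\hat v_1)$, Cauchy--Schwarz reduces both claims to controlling $\|\hat v_1\mu_e^\pm/w^\pm\|_\pm$ and either $\|\Ql_\pm(\hat v_1)\|_\pm$ or $\|(\Ql_\pm-\Proj^\pm)(\hat v_1)\|_\pm$.

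First I would verify the integrability ingredients. By the bound \eqref{weight2} we have $|\mu_e^\pm/w^\pm|\leq 1$, and of course $|\hat v_1|\leq 1$, so
\begin{equation*}
\|\hat v_1\mu_e^\pm/w^\pm\|_\pm^2\leq \int_0^P\!\!\int w^\pm\,dv\,dx.
\end{equation*}
Since $w^\pm=c(1+|e^\pm|)^{-\alpha}$ with $\alpha>2$, and $e^\pm=\langle v\rangle\pm\phi^0(x)$ grows like $|v|$ at infinity, the weight is integrable in $v\in\R^2$, hence this norm is finite uniformly. The same calculation shows $\hat v_1\in L_\pm^2$, so the operator $\Ql_\pm$ may be legitimately applied to $\hat v_1$.

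For part (1), write
\begin{equation*}
l^\l-l^0=\frac{1}{P}\sum_\pm\left\langle \hat v_1\mu_e^\pm/w^\pm,\;(\Ql_\pm-\Proj^\pm)(\hat v_1)\right\rangle_\pm,
\end{equation*}
and estimate by Cauchy--Schwarz to get $|l^\l-l^0|\leq\frac{1}{P}\sum_\pm\|\hat v_1\mu_e^\pm/w^\pm\|_\pm\,\|(\Ql_\pm-\Proj^\pm)(\hat v_1)\|_\pm$. The first factor was shown to be finite above; the second tends to $0$ as $\l\to 0$ by Lemma \ref{propql}(\ref{laz}) applied to the element $\hat v_1\in L_\pm^2$. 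This gives $l^\l\to l^0$. For part (2), the same Cauchy--Schwarz step yields $|l^\l|\leq\frac{1}{P}\sum_\pm\|\hat v_1\mu_e^\pm/w^\pm\|_\pm\,\|\Ql_\pm(\hat v_1)\|_\pm$, and Lemma \ref{propql}(\ref{qlnorm}) (operator norm $1$ on $L_\pm^2$) bounds the last factor by $\|\hat v_1\|_\pm$, independent of $\l$.

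There is no real obstacle here; the only thing one must be careful about is that the cited properties of $\Ql_\pm$ in Lemma \ref{propql} are stated for elements of $L_\pm^2$, which requires the preliminary check that $\hat v_1$ indeed lies in $L_\pm^2$ and that the pairing $\hat v_1\mu_e^\pm\,dv\,dx$ can be absorbed against $w^\pm$ to produce a legitimate $L_\pm^2$ inner product. Both amount to the weight integrability $\int w^\pm\,dv<\infty$, which is guaranteed by $\alpha>2$.
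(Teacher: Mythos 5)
Your proof is correct. The paper itself omits the proof of this lemma (deferring to \cite{benartzi1}), and your argument — rewriting $l^\l$ and $l^\l-l^0$ as weighted $L^2_\pm$ inner products, checking via \eqref{weight2} and the integrability of $w^\pm$ (from $\alpha>2$) that $\hat v_1$ and $\hat v_1\mu_e^\pm/w^\pm$ lie in $L^2_\pm$, and then applying Cauchy--Schwarz together with parts (\ref{qlnorm}) and (\ref{laz}) of Lemma \ref{propql} — is exactly the standard argument that fills this gap.
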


The following lemma lists the important properties of $\Ml$ -- all of which are inherited directly from the properties of the various operators it is made up of, as listed in Lemma \ref{properties1}. 
\begin{lemma}[Properties of $\Ml$]\label{propml}
To simplify notation, we write $u^T$ for a generic element $\left(\phi,\psi,b\right)\in\hpt\times\hpt\times\R$.
\begin{enumerate}
\item
For all $\la\geq0$, $\Ml$ is selfadjoint on $\lpt\times\lpt\times\R$ with domain $\hpt\times\hpt\times\R$.

\item
For all $u^T\in\hpt\times\hpt\times\R$, $\|\Ml u-\Mz u\|_{\lpt\times\lpt\times\lpt}\to0$ as $\la\to0$.

\item
If $\sigma>0$, then $\|\Ml-\Ms\|\to0$ as $\la\to\sigma$, where $\|\cdot\|$ is the operator norm from $\hptz\times\hpt\times\R$ to $\lpt\times\lpt\times\lpt$.

\end{enumerate}
\end{lemma}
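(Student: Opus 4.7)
The strategy is entirely structural: $\Ml$ is a $3\times 3$ block operator whose entries are exactly the operators already analyzed in Lemmas \ref{properties1}, \ref{properties2}, and \ref{propll}, so each of the three properties reduces to the corresponding property for each block entry and then an assembly argument using the product Hilbert space norm
$\|(\phi,\psi,b)\|^2 = \|\phi\|_{\lpt}^2 + \|\psi\|_{\lpt}^2 + |b|^2$.

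For part (1), I would first observe that $\Aonel$ and $\Atwol$ are selfadjoint on $\lptz$ and $\lpt$ respectively with domains $\hptz$ and $\hpt$ by Lemma \ref{properties1}(\ref{op-norm}), the $(3,3)$ entry $-P(\la^2-l^\la)$ is a real scalar (hence trivially selfadjoint on $\R$), and the remaining off-diagonal entries are placed as formal adjoints of each other: $\Bl$ appears opposite $(\Bl)^*$, $\Cl$ opposite $(\Cl)^*$, and $-\Dl$ opposite $-(\Dl)^*$. Since $\Bl,\Cl,\Dl$ are bounded (Lemma \ref{properties2}(\ref{blnorm})) and their adjoints are the operators appearing in the transposed positions, a direct computation on $u^T=(\phi,\psi,b),\ v^T=(\phi',\psi',b')\in\hpt\times\hpt\times\R$ gives $\langle \Ml u, v\rangle = \langle u, \Ml v\rangle$. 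Selfadjointness (as opposed to mere symmetry) follows because the diagonal unbounded part is a direct sum of two selfadjoint operators perturbed by bounded symmetric off-diagonal blocks, and bounded symmetric perturbations preserve selfadjointness (Kato--Rellich).

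For part (2), I would expand $\Ml u - \Mz u$ entry by entry. The first component is $-(\Aonel-\Aonez)\phi + (\Bl-\Bz)\psi + \Cl b$, which tends to $0$ in $\lpt$ by Lemma \ref{properties1}(2), Lemma \ref{properties2}(\ref{bz}), and the fact that $\|\Cl b\|_{\lpt}\to 0$. The second component is $(\Bl-\Bz)^*\phi + (\Atwol-\Atwoz)\psi - \Dl b$, again going to $0$ by the same lemmas (the adjoint estimate is immediate from $\|\Bl-\Bz\|=\|(\Bl-\Bz)^*\|$ which is bounded by the uniform operator bound plus the strong convergence on a fixed element $\phi\in\hpt$). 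The third component is $(\Cl)^*\phi - (\Dl)^*\psi - P(\la^2-l^\la)b + Pl^0 b$, which tends to $0$ by the adjoint versions of Lemma \ref{properties2}(\ref{bz}) together with Lemma \ref{propll}(1). Summing three squared $\lpt$-norms that each go to zero yields the claim.

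For part (3), the situation is cleaner because at $\sigma>0$ every block converges in operator norm: $\Ail\to\Ais$ in the norm $\hpt\to\lpt$ at rate $O(|\la-\sigma|)$ by Lemma \ref{properties1}(3), the off-diagonal blocks converge in $\lpt\to\lpt$ at rate $O(|\la-\sigma|)$ by Lemma \ref{properties2}(3), and $l^\la,\la^2$ are continuous in $\la$. Bounding
\[
\|\Ml u - \Ms u\|_{\lpt\times\lpt\times\lpt} \leq C\,\bigl(\|\phi\|_{\hpt}+\|\psi\|_{\hpt}+|b|\bigr)\,|\la-\sigma|
\]
via the triangle inequality applied blockwise gives $\|\Ml-\Ms\|\to 0$ in the norm from $\hptz\times\hpt\times\R$ to $\lpt\times\lpt\times\lpt$. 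The only mild subtlety I anticipate is the bookkeeping for the adjoint blocks in part (2): strong convergence of an operator family does not in general imply strong convergence of the adjoints, so I would verify directly that $(\Bl-\Bz)^*\phi\to 0$ using the explicit formula for $(\Bl)^*$ given earlier and the strong convergence $\Ql_\pm \to \Proj^\pm$ from Lemma \ref{propql}(\ref{laz}), rather than trying to dualize abstractly.
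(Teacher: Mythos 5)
Your proposal is correct and follows exactly the route the paper intends: the paper gives no written proof of this lemma, stating only that the properties are ``inherited directly'' from Lemmas \ref{properties1}, \ref{properties2}, and \ref{propll}, which is precisely your blockwise assembly. Your explicit treatment of the adjoint blocks in part (2) --- verifying $(\Bl)^*\phi\to(\Bz)^*\phi$ from the displayed formula for $(\Bl)^*$ and the strong convergence $\Ql_\pm\to\Proj^\pm$ rather than dualizing abstractly --- correctly resolves the one subtlety the paper leaves implicit.
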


\begin{lemma}\label{well-defined}
The operator $\left(\Bz\right)^*\left(\Aonez\right)^{-1}\Bz$ is a well-defined bounded operator from $\lpt\to\lpt$.
\end{lemma}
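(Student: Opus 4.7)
The plan is to show that $\mathcal{A}_1^0$ can be inverted on the subspace $L_{P,0}^2$ of mean-zero functions, and that the range of $\mathcal{B}^0$ lies in exactly that subspace, so the composition makes sense and is a product of bounded operators.

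First, I would invoke Lemma \ref{properties2}(\ref{blnorm}), which tells us that $R(\mathcal{B}^0) \subseteq \{1\}^\perp = L_{P,0}^2$, and that $\mathcal{B}^0 \colon L_P^2 \to L_P^2$ is bounded. Hence $\mathcal{B}^0$ factors through $L_{P,0}^2$ as a bounded map. By the same token, $(\mathcal{B}^0)^*\colon L_P^2 \to L_P^2$ is bounded (with the same norm, since $\mathcal{B}^0$ is bounded and everything is on Hilbert spaces).

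Next I would argue that $\mathcal{A}_1^0$, viewed as a selfadjoint operator on $L_{P,0}^2$ with domain $H_{P,0}^2$, has a bounded everywhere-defined inverse. By Lemma \ref{properties1}(\ref{op-norm}), $\mathcal{A}_1^0$ is selfadjoint with discrete spectrum. By the standing hypothesis of Theorem \ref{mainthm}, its null space on $H_P^2$ consists exactly of the constant functions, so upon restriction to the invariant subspace $L_{P,0}^2$ its kernel is trivial. Discreteness of the spectrum then forces $0$ to be an isolated point of $\sigma(\mathcal{A}_1^0|_{H_P^2})$ that is removed by the restriction; hence there exists $\delta > 0$ with $\sigma\bigl(\mathcal{A}_1^0|_{H_{P,0}^2}\bigr) \subseteq (-\infty,-\delta] \cup [\delta,\infty)$. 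The spectral theorem (or equivalently the closed graph theorem applied to the closed bijective operator $\mathcal{A}_1^0 \colon H_{P,0}^2 \to L_{P,0}^2$) then gives a bounded inverse $(\mathcal{A}_1^0)^{-1} \colon L_{P,0}^2 \to H_{P,0}^2$, with $\|(\mathcal{A}_1^0)^{-1}\|_{L_{P,0}^2 \to L_{P,0}^2} \leq 1/\delta$.

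Finally, composing these three bounded maps,
\begin{equation*}
L_P^2 \xrightarrow{\mathcal{B}^0} L_{P,0}^2 \xrightarrow{(\mathcal{A}_1^0)^{-1}} L_{P,0}^2 \hookrightarrow L_P^2 \xrightarrow{(\mathcal{B}^0)^*} L_P^2,
\end{equation*}
yields the claimed bounded operator $(\mathcal{B}^0)^* (\mathcal{A}_1^0)^{-1} \mathcal{B}^0 \colon L_P^2 \to L_P^2$.

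The main obstacle is the second step: carefully justifying that the restriction to $L_{P,0}^2$ removes all the kernel and that $0$ is genuinely separated from the rest of the spectrum. Everything else is essentially bookkeeping; once one has the correct invariant decomposition $L_P^2 = \mathbb{R}\cdot 1 \oplus L_{P,0}^2$ (which is respected by $\mathcal{A}_1^0$ because the constants lie in its kernel and it is selfadjoint), the boundedness of the inverse is automatic from discreteness of the spectrum together with triviality of the kernel on the complementary subspace.
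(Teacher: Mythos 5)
Your proposal is correct and follows essentially the same route as the paper's proof: the range condition $R(\Bz)\subseteq\{1\}^\perp$ from Lemma \ref{properties2}, the hypothesis that $\ker\Aonez$ consists of the constants, and discreteness of the spectrum to invert $\Aonez$ on $\{1\}^\perp$, then composition of bounded maps. You simply spell out in more detail (the spectral gap $\delta$ and the invariance of the decomposition $\lpt=\R\cdot 1\oplus\lptz$) what the paper leaves implicit.
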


\begin{proof}
By part \ref{blnorm} of Lemma \ref{properties2}, $R(\Bz)\subseteq\{1\}^\perp$. By assumption, the null space of $\Aonez$ contains only the constant functions. Since the spectrum of $\Aonez$ is discrete, it is invertible on $\{1\}^\perp$, and, therefore, $\left(\Bz\right)^*\left(\Aonez\right)^{-1}\Bz$ is a well-defined operator. This operator is bounded since $\Bz$ is bounded (again, see part \ref{blnorm} of Lemma \ref{properties2}) and since $\left(\Aonez\right)^{-1}$ is bounded.
\end{proof}

\begin{lemma}\label{k-op}
The operator $\Konez=\Atwoz+\left(\Bz\right)^*\left(\Aonez\right)^{-1}\Bz$ is of the form presented in \eqref{schrodinger-op}.
\end{lemma}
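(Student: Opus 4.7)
The plan is to exhibit $\Konez$ explicitly in the form $-\p_x^2+K$ by collecting the Laplacian out of $\Atwoz$ and showing that everything else is a bounded selfadjoint operator on $\lpt$. Recall
\[
\Atwoz h = -\p_x^2 h - \Bigl(\sum_{\pm}\int \hat v_2 \mu_p^\pm \, dv\Bigr)h - \sum_{\pm}\int \mu_e^\pm \hat v_2 \Proj^\pm(\hat v_2 h)\,dv,
\]
so writing $K = K_2 + (\Bz)^*(\Aonez)^{-1}\Bz$ with
\[
K_2 h := -\Bigl(\sum_{\pm}\int \hat v_2 \mu_p^\pm \, dv\Bigr)h - \sum_{\pm}\int \mu_e^\pm \hat v_2 \Proj^\pm(\hat v_2 h)\,dv
\]
gives $\Konez = -\p_x^2 + K$ immediately. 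It remains to verify that $K:\lpt\to\lpt$ is bounded and selfadjoint.

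For the multiplication piece of $K_2$, the coefficient $\sum_\pm \int \hat v_2 \mu_p^\pm\, dv$ is uniformly bounded in $x$ by \eqref{weight2} (since $|\hat v_2|\le 1$ and $\int |\mu_p^\pm|\,dv$ is finite), which shows boundedness on $\lpt$, and selfadjointness is automatic since the coefficient is real. For the projection piece, write it as $T^*T$-type: since $\Proj^\pm$ is an orthogonal projection on $\lwt$ and multiplication by $\hat v_2$ is bounded on $\lwt$, the map $h\mapsto \hat v_2 \Proj^\pm(\hat v_2 h)$ is bounded on $L^2$ (after absorbing the factor $\mu_e^\pm$ into the weighted integral and using the Cauchy--Schwarz bound $|\int \mu_e^\pm g\, dv|^2 \le (\int |\mu_e^\pm|/w^\pm\, dv)\int |g|^2 w^\pm \, dv$ with $w^\pm$ as in \eqref{weight}). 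Selfadjointness of each summand follows from a direct pairing computation against $\lpt$ test functions using that $\Proj^\pm$ is selfadjoint on $\lwt$ and that $\mu_e^\pm$ depends only on the invariants $(e^\pm,p^\pm)$ (so is a function in $\ker D^\pm$ and commutes with $\Proj^\pm$).

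For the remaining term $(\Bz)^*(\Aonez)^{-1}\Bz$, Lemma \ref{well-defined} already gives that this is a well-defined bounded operator from $\lpt$ to $\lpt$. Selfadjointness follows from the standard identity $((\Bz)^*(\Aonez)^{-1}\Bz)^* = (\Bz)^*((\Aonez)^{-1})^*\Bz = (\Bz)^*(\Aonez)^{-1}\Bz$, since $\Aonez$ is selfadjoint (Lemma \ref{properties1}) and hence so is its inverse restricted to $\{1\}^\perp$. Combining these three bounded selfadjoint contributions gives that $K$ is a bounded selfadjoint operator on $\lpt$, which is precisely the hypothesis \eqref{H1} entering the definition \eqref{schrodinger-op}.

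The only mildly delicate point is the projection piece of $K_2$: the projection $\Proj^\pm$ is defined on the weighted space $\lwt$, not on $\lpt$, so one must verify that sandwiching by $\hat v_2$ and integrating against $\mu_e^\pm$ produces a bounded map between the unweighted periodic $L^2$ spaces. This is where the decay assumption \eqref{weight2} on $\mu_e^\pm$ does the work, allowing one to convert the $\lpt$ norm of $h$ into an $\lwt$-type estimate and back. Once this is checked, the lemma follows immediately from the decomposition above.
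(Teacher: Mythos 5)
Your proof is correct and takes essentially the same route as the paper, whose own proof is a one-line appeal to the already-established facts that $\Atwoz=-\p_x^2+K_2$ with $K_2$ bounded and selfadjoint (Lemma \ref{properties1}) and that $\left(\Bz\right)^*\left(\Aonez\right)^{-1}\Bz$ is bounded (Lemma \ref{well-defined}); you have simply filled in the details that the paper defers to \cite{benartzi1}. One small slip: the Cauchy--Schwarz factor should be $\int |\mu^\pm_e|^2/w^\pm\,dv$ (finite because $|\mu^\pm_e|\leq w^\pm$), not $\int |\mu^\pm_e|/w^\pm\,dv$, which need not converge; this does not affect the validity of the argument.
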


\begin{proof}
This is clearly true by the properties of $\Atwoz$ and by Lemma \ref{well-defined}.
\end{proof}

\vfill

\end{document}